\documentclass [11pt]{amsart}
\usepackage{amscd,amsmath,amsthm,amssymb,amsfonts,epsfig}
\usepackage{bbm}
\usepackage{graphicx}
\usepackage[utf8]{inputenc}
\usepackage{xcolor}
\usepackage{mathrsfs}  
\usepackage{esvect}
\usepackage{enumitem}
\usepackage{indentfirst}
\usepackage{caption}
\usepackage{subcaption}
\usepackage[export]{adjustbox}
\usepackage{tkz-euclide}
\usepackage{tcolorbox}
\usepackage{pgfplots}
\pgfplotsset{compat=newest}
\usepgfplotslibrary{fillbetween}
\usepackage{comment}
\usepackage{url}
\usepackage{hyperref}

\usepackage[a4paper,top=3cm,bottom=2cm,left=3cm,right=3cm,marginparwidth=1.75cm]{geometry}

\usetikzlibrary{intersections,positioning, calc}

\renewcommand{\bar}{\overline}
\newcommand{\R}{\mathbb{R}}

\newcommand{\pa}{\partial}

\newcommand{\be}{\mathbf{e}}
\newcommand{\bn}{\mathbf{n}}

\newcommand{\bt}{\mathbf{t}}

\newcommand{\cE}{\mathcal{E}}

\newcommand{\cL}{\mathcal{L}}
\newcommand{\cM}{\mathcal{M}}

\newcommand{\cD}{\mathcal{D}}
\newcommand{\cH}{\mathcal{H}}
\newcommand{\cG}{\mathcal{G}}
\newcommand{\cT}{\mathcal{T}}
\newcommand{\ent}{\mathrm{Ent}}

\newcommand{\vare}{\varepsilon}

\newtheorem{theorem}{Theorem}[section]
\newtheorem{corollary}[theorem]{Corollary}
\newtheorem{lemma}[theorem]{Lemma}
\newtheorem{proposition}[theorem]{Proposition}

\theoremstyle{remark}
\newtheorem{remark}[theorem]{Remark}

\theoremstyle{definition}

\title{Classification of ancient finite-entropy curve shortening flows}

\author{Kyeongsu Choi}
\address{KC: School of Mathematics, Korea Institute for Advanced Study, 85 Hoegiro, Dongdaemun-gu, Seoul 02455, Republic of Korea}
\email{choiks@kias.re.kr}

\author{Dong-Hwi Seo}
\address{DS: Institute of Mathematics (IMAG), University of Granada, C. Ventanilla, 11, Centro, 18001 Granada, Spain}
\email{donghwi.seo26@gmail.com}

\author{Wei-Bo Su}
\address{WS: Department of Mathematics, National Central University, No. 300, Zhongda Rd., Zhongli District, Taoyuan City 320317, Taiwan}
\email{weibosu@math.ncu.edu.tw}

\author{Kai-Wei Zhao}
\address{KZ: Department of Mathematics, UC Irvine, Irvine, California 92697 USA}
\email{kaiweiz2@uci.edu}

\begin{document}

\begin{abstract}
We prove that any ancient smooth embedded finite-entropy curve shortening flow is one of the following: a static line, a shrinking circle, a paper clip, a translating grim reaper, or a graphical ancient trombone. 

An ancient trombone is an immersed ancient flow, either compact or non-compact, obtained by gluing together $m$ translating grim reaper curves. For each $m$, there exists a $(2m-1)$-parameter family of graphical ancient trombones, up to rigid motions and time shifts as constructed by Angenent-You.  

In particular, our result implies that any compact ancient smooth embedded finite-entropy flow is convex. Moreover, any non-compact ancient smooth embedded finite-entropy flow is either a static line or a complete graph over a fixed open interval.
\end{abstract}

\maketitle 
\section{Introduction}

Let $\gamma: \mathbb{L}\times I\to \mathbb{R}^2$, where $\mathbb{L}=\mathbb{R}^1$ or $\mathbb{S}^1$, be a one-parameter family of smooth immersions satisfying 
\begin{equation}
    \gamma_{t} = \boldsymbol{\kappa},
\end{equation}
where $\boldsymbol{\kappa}$ is the curvature vector $\gamma_{ss}$ of $\gamma$. Then, we say that the family $\mathcal{M}=\cup_{t\in I}(M_t,t)$ of the planar curves $M_t=\gamma(\mathbb{L},t)$ is a solution to the curve shortening flow. In particular, if the time interval is $I=(-\infty, T)$, then the flow is called ancient.
Since the flow is governed by a parabolic equation, ancient solutions can be classified by Liouville-type theorems. Daskalopoulos–Hamilton–Sesum \cite{DHS10} showed that any ancient, convex, compact solution to the curve shortening flow is either a shrinking circle or a paper clip. Later, Bourni–Langford–Tinaglia \cite{BLT20} classified all ancient convex flows by proving that any ancient, convex, complete, noncompact solution to the curve shortening flow is a translating grim reaper. In the previous paper \cite{CSSZ24}, the authors further extended these results to ancient low-entropy flows, showing that any ancient smooth flow embedded in $\mathbb{R}^2$ with the entropy $\text{Ent}[\mathcal{M}]<3$ is either convex or a static line. Here, $\text{Ent}[M_t]$ denotes the entropy of the curve $M_t$, first introduced by Magni--Mantegazza \cite[Definition 1.9]{MM09} and later extensively studied by Colding--Minicozzi \cite{CM12}, and $\text{Ent}[\mathcal{M}]$ denotes
\begin{equation}
\text{Ent}[\mathcal{M}]:=\sup_{t\in I}\text{Ent}[M_t].
\end{equation}
See \eqref{def:entropy} for the full definition.
In this paper, we weaken the low-entropy assumption to finite-entropy $\text{Ent}[\mathcal{M}]<+\infty$. Note that there are still a lot of ancient curve shortening flows embedded in $\mathbb{R}^2$ with infinite entropy; see Halldorsson \cite{Ha12}, Charyyev \cite{Ch22}, Zhang-Olson-Khan-Angenent \cite{AIOZ23}.

\begin{theorem}\label{thm:main}
     An ancient smooth curve shortening flow embedded in $\mathbb{R}^2$ with finite-entropy is one of the following: a shrinking circle, a paper clip, a static line, a translating grim reaper, or a graphical ancient trombone.
\end{theorem}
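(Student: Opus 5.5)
We first pin down the structure of the flow at $t\to-\infty$ and then read off the classification. Finite entropy, together with Huisken's monotonicity formula, makes every blow-down sequence $\lambda_j M_{\lambda_j^{-2}t}$ with $\lambda_j\to0$ subconverge to an ancient self-similarly shrinking flow. In the plane, a shrinker of finite entropy that arises as such a limit of embedded curves is a multiplicity-$m$ line, a circle, or a static union of lines through the origin. Embeddedness and the Abresch--Langer classification exclude the other immersed shrinkers. Hence the backward tangent flow is either a round circle, in which case the flow itself is a shrinking circle by the equality case of monotonicity, or a finite union of lines/rays through the origin with multiplicities.

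Next we study each end separately. In the line/ray case, the sheeting picture as $t\to-\infty$ should look like finitely many graphical pieces, of which there are at most $\mathrm{Ent}[\mathcal M]$. Using Sturmian intersection theory (Angenent's zero-counting) against lines, rays and the limiting translators, we would show three things:
\begin{enumerate}
\item The number of inflection points and of critical points of the height function is finite and nonincreasing in $t$.
\item Consequently, for $t\ll 0$ the curve $M_t$ is a union of a bounded number of convex arcs separated by almost-straight parallel sheets.
\item Rescaling around points of maximal curvature on each convex arc produces, in the limit, ancient convex flows. By \cite{DHS10,BLT20} these are grim reapers, shrinking circles or lines.
\end{enumerate}
Since a circle can only occur in the compact convex case, as $t\to-\infty$ the flow decomposes into $m$ translating grim reapers joined along parallel asymptotic strips. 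The widths of these strips are determined by the tangent flow, and all the reapers move in a common direction because the blow-down is a multiplicity-$m$ line.

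Finally, we identify the flow. Here we use uniqueness of ancient solutions with a prescribed backward asymptotic gluing of grim reapers. The Angenent--You construction supplies, for each admissible configuration of reaper widths and positions, a $(2m-1)$-parameter family of graphical ancient trombones. We would match our solution to the trombone with the same asymptotic data, then run a maximum-principle argument comparing the two via Sturmian intersection counts with vanishing number of crossings. This forces equality. For $m=1$ the result is a grim reaper, and for a compact flow with two reapers it is the paper clip. Embeddedness then rules out all non-graphical trombones, and among compact configurations it leaves only $m\le2$, i.e. the convex ones.

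The main obstacle is the third step: proving that an embedded flow which is backward asymptotic to a glued reaper configuration must be exactly one of the Angenent--You trombones. This requires a uniqueness and rigidity statement for the gluing. The first thing to try is a spectral/linearized analysis of the graph over the limiting reapers, showing that the only nondecaying modes are the $2m-1$ parameters plus rigid motions and time shifts, combined with barrier arguments to control the tails.
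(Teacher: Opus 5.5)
You have the right overall shape (blow-down to a multiplicity line, grim-reaper tips, rigidity against Angenent--You trombones). But the steps that carry the proof are asserted rather than proved, and one is false as stated: the strip widths are \emph{not} determined by the tangent flow.

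\textbf{Extracting the asymptotic data.} The blow-down is the single line $\{x^2=0\}$ with multiplicity $m+1$, and it collapses every bounded offset. (Your step 1 also never excludes non-parallel lines, which you later assume.) To have any asymptotic data, you must show that each sheet converges at the \emph{original} scale to a horizontal line $\{x^2=a_i\}$, with a rate.
\begin{itemize}
\item The paper gets this from a Merle--Zaag type spectral analysis of the rescaled sheet profiles. The neutral (rotation) mode cannot dominate, so $e^{-\tau/2}u\to a$.
\item It then upgrades this to exponential convergence of the edges on a graphical region of size $\sim|t|$. The inputs are coarse geometric estimates: the solution lies in a strip, vertices move with horizontal speed at least $3/(2A)$, and there are gradient and curvature bounds away from the tips.
\item Blowing up at maximal curvature only gives a grim reaper at the curvature scale. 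It does not show that the reaper's width converges to $|a_i-a_{i+1}|$.
\item In particular you must exclude degenerate fingers with $a_i=a_{i+1}$, where $\kappa(v(t))\to\infty$ as $t\to-\infty$. The paper does this by showing such a tip would satisfy $\partial_t v^1\leq -e^{\beta v^1}$, so it escapes to infinity in finite backward time.
\item The horizontal shifts $b_i$ must also be extracted. The paper uses the area expansion $|F(t)|=-\pi t+C_0+O(e^{\beta t})$ of each finger region and a best-fitting grim reaper.
\end{itemize}
Without the $a_i$ and $b_i$ in hand, there is no ``same asymptotic data'' to match.

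\textbf{The uniqueness step.} As you describe it, this step would fail. A vanishing number of crossings does not force equality: a grim reaper and its horizontal translate never intersect. What you need is a monotone quantity that vanishes at $-\infty$.
\begin{itemize}
\item The paper writes $M_t$ and the best-fitting trombone as graphs $x^1=V(z,t)$ and $x^1=\bar V(z,t)$ over $(a_0,a_m)$.
\item Since $\partial_t(V-\bar V)=\partial_z(\arctan V_z-\arctan \bar V_z)$ and the boundary terms vanish at $a_0$ and $a_m$, the quantity $\int_{a_0}^{a_m}|V-\bar V|\,dz$ is nonincreasing in $t$.
\item It tends to $0$ as $t\to-\infty$ because each finger region's symmetric difference with its best-fitting reaper tends to $0$. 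This is exactly where the $b_i$ enter.
\end{itemize}

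\textbf{The order of the argument.} The contraction argument above requires graphicality \emph{before} matching. Your order is to match first and then use embeddedness to discard non-graphical trombones. That would need a uniqueness theorem for immersed configurations, which is not available. Your suggested linearized analysis around glued reapers is a much harder route, and you leave it unexecuted.

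The paper instead derives $a_0<\cdots<a_m$, and noncompactness when $\mathrm{Ent}[\mathcal{M}]\geq 3$, directly from embeddedness:
\begin{itemize}
\item finger orientations alternate;
\item finger regions of the same parity are nested or disjoint;
\item properly nested same-parity fingers travel at different speeds $\pi/|a_i-a_{i+1}|$, so they would collide backward in time.
\end{itemize}
Finally, the case $\mathrm{Ent}[\mathcal{M}]<3$ is simply the earlier low-entropy classification: the flow is convex or a static line, and convex ancient flows are already classified.
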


Angenent-You \cite{AY21} constructed ancient trombones by gluing translating grim reapers. They are immersed flows with finite total curvature. Namely, ancient trombones satisfy
\begin{equation}
\sup_{t\in I}  \int_{M_t} |\boldsymbol{\kappa}| \:ds  <+\infty.
\end{equation}
Indeed, in  \cite{SZ24} the third and last named authors recently showed that an ancient flow with finite total curvature has finite-entropy. Hence, the finite-entropy assumption can be replaced by that of finite total curvature.

\begin{corollary}\label{cor:main1}
An ancient smooth curve shortening flow embedded in $\mathbb{R}^2$ with finite total curvature is one of the following: a shrinking circle, a paper clip, a static line, a translating grim reaper, or a graphical ancient trombone.
\end{corollary}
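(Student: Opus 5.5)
The corollary should follow by reduction to Theorem~\ref{thm:main}, using the result of \cite{SZ24} quoted just above. Let $\mathcal{M}=\cup_{t\in I}(M_t,t)$ be an ancient smooth embedded curve shortening flow with
\[
\sup_{t\in I}\int_{M_t}|\boldsymbol{\kappa}|\,ds<+\infty .
\]
The plan is to show that $\text{Ent}[\mathcal{M}]<+\infty$ and then apply Theorem~\ref{thm:main} directly. The five alternatives in the conclusion are then exactly those of the theorem, so nothing further is needed once finiteness of entropy is established.

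\textbf{Step 1: invoke \cite{SZ24}.} The main step is to cite the result of \cite{SZ24} that an ancient flow with finite total curvature has finite entropy. One point needs checking: that their hypotheses (smoothness, the class of immersed or embedded curves, complete versus compact) cover our $\mathcal{M}$, and that their conclusion bounds the supremum over all $t\in I$, as in the definition of $\text{Ent}[\mathcal{M}]$, rather than only along a sequence of times. If their bound were only along a sequence $t_i\to-\infty$, I would add a short argument using the monotonicity of Gaussian density (Huisken's monotonicity). Since $\text{Ent}[M_t]$ is nonincreasing in $t$, a bound on entropy as $t\to-\infty$ propagates to a uniform bound for all $t\in I$.

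\textbf{Fallback: a direct heuristic.} If one wanted to avoid relying on \cite{SZ24}, I would argue heuristically as follows. Finite total curvature means the tangent angle has bounded total variation along each $M_t$. An embedded curve of this kind therefore splits into a bounded number $N$ of pieces, each a graph over some line with bounded turning. For each piece, the Gaussian integral $\int (4\pi\lambda)^{-1/2}e^{-|x-x_0|^2/4\lambda}\,ds$ is bounded by a constant times the number of times the piece can meet a line. This gives $\text{Ent}[M_t]\le C(N)$ uniformly in $t$.

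\textbf{Main obstacle.} The hard part of this fallback is controlling the noncompact ends. One has to make sure that long nearly straight arcs cannot accumulate in a bounded region without contributing curvature; embeddedness and the total curvature bound together should prevent this. This is precisely the content of \cite{SZ24}, so I would rely on that citation.
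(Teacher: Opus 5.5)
Your proposal is correct and follows the paper's argument: the paper cites \cite[Theorem~1.1]{SZ24} to get $\text{Ent}[\mathcal{M}]<+\infty$ from finite total curvature, then applies Theorem~\ref{thm:main}.

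As a side remark, the direction you need is elementary, so the ``obstacle'' in your fallback does not arise. Suppose $\sup_t\int_{M_t}|\kappa|\,ds\le K$. Then each $M_t$ splits into at most $4K/\pi+1$ arcs along which the tangent angle varies by at most $\pi/4$. Each such arc is a graph of slope at most $\tan(\pi/8)$ over some line, so its Gaussian density is at most $\sec(\pi/8)$. This gives $\text{Ent}[M_t]\le \sec(\pi/8)\,(4K/\pi+1)$ uniformly in $t$, with no use of embeddedness or monotonicity.
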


Also, by the classification theorem \ref{thm:main}, we can extend the result \cite{DHS10}
 by Daskalopoulos–Hamilton–Sesum from convex flows to finite-entropy flows.
 
\begin{corollary}\label{cor:main2}
Let $\mathcal{M}$ be an ancient smooth closed curve shortening flow embedded in $\mathbb{R}^2$ with finite-entropy. Then, it is convex.
\end{corollary}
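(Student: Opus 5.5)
The plan is to read off Corollary \ref{cor:main2} from Theorem \ref{thm:main} by checking which of the five listed flows can consist of closed curves. Let $\mathcal{M}$ be an ancient smooth closed curve shortening flow embedded in $\mathbb{R}^2$ with $\text{Ent}[\mathcal{M}]<+\infty$. By Theorem \ref{thm:main}, $\mathcal{M}$ is a shrinking circle, a paper clip, a static line, a translating grim reaper, or a graphical ancient trombone. The argument then goes case by case, discarding every non-compact option.

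\textbf{Lines and grim reapers.} A static line and a translating grim reaper are non-compact at every time. They therefore cannot be closed curves $M_t=\gamma(\mathbb{S}^1,t)$, and both are excluded.

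\textbf{Graphical ancient trombones.} This is the only step that needs care. The abstract says the classification has a second consequence: every non-compact ancient smooth embedded finite-entropy flow is either a static line or a complete graph over a fixed open interval. So the word ``graphical'' in Theorem \ref{thm:main} should be read as: each time slice $M_t$ is the graph of a function over an open interval, on which it is complete. I would extract this description of the graphical trombones directly from the paper's definition and proof of Theorem \ref{thm:main}.

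Once that is in place, the trombone case closes quickly. A graph of a function over an interval meets each vertical line at most once. A closed embedded curve, by contrast, meets every vertical line crossing its interior at least twice. Hence a graphical trombone is never compact, and it is excluded as well.

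The one point to double-check is the compact Angenent--You trombones. They are immersed but not embedded, so they do not satisfy the hypotheses and cannot occur. This matches the fact that only the graphical (hence embedded, non-compact) trombones appear in Theorem \ref{thm:main}.

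\textbf{Conclusion.} The only remaining possibilities are the shrinking circle and the paper clip. Both are convex at every time: the circle trivially, and the paper clip by its explicit Angenent form, or equivalently by \cite{DHS10}, where the paper clip appears precisely as a convex compact ancient solution. Therefore $\mathcal{M}$ is convex.

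The main obstacle is purely in confirming that ``graphical ancient trombone'' in Theorem \ref{thm:main} always means the non-compact graphical type. Everything else is immediate.
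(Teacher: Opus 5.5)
Your proposal is correct and is essentially the paper's proof: discard the non-compact members of the list in Theorem~\ref{thm:main}, which leaves only the shrinking circle and the paper clip, and both are convex. The step you flagged is settled directly in Theorem~\ref{thm: graphicality}, which shows that for $\ent[\cM]\geq 3$ the flow is non-compact and each $M_t$ is a graph $x^1=V(x^2,t)$ over $(a_0,a_m)$; in the paper's coordinates your line-counting argument should therefore use horizontal rather than vertical lines.
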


Furthermore, we can observe that nonstatic noncompact finite-entropy flows are graphs.

\begin{corollary}\label{cor:main3}
Let $\mathcal{M}$ be a noncompact, complete, ancient, smooth solution to the curve shortening flow in $\mathbb{R}^2$ with finite-entropy, which is not a static line, and is then necessarily a complete graph over a bounded open interval.
\end{corollary}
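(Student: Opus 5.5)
We will derive the corollary directly from Theorem \ref{thm:main} by going through the list of possible flows. First, the shrinking circle and the paper clip have compact time slices, so they are excluded by the noncompactness assumption, and the static line is excluded by hypothesis. This leaves two cases: a translating grim reaper or a noncompact graphical ancient trombone. It remains to check that in each case every time slice $M_t$ is a complete graph over one fixed bounded open interval.

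For the translating grim reaper we use the explicit formula. Up to a rigid motion and a parabolic rescaling it is
\begin{equation*}
M_t=\{(x,y): y = t - \log\cos x,\ |x|<\pi/2\}.
\end{equation*}
This is a graph over $(-\pi/2,\pi/2)$, and $y\to+\infty$ as $|x|\to\pi/2$. Hence each $M_t$ is a properly embedded, complete graph over the same bounded interval, and the interval does not depend on $t$.

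For a noncompact graphical ancient trombone we appeal to the Angenent--You construction \cite{AY21}. There the flow is written as $M_t=\{y=u(x,t)\}$ with $x$ ranging over a fixed interval $(a,b)$; graphicality is the defining property of the class singled out in Theorem \ref{thm:main}. The key facts to extract from \cite{AY21} are the following.
\begin{enumerate}[label=(\roman*)]
\item The domain $(a,b)$ is independent of $t$. This holds because the two unbounded ends are asymptotic to the unbounded ends of the outermost translating grim reapers, and these translate in a fixed direction. So $(a,b)$ is bounded, with width equal to $\pi$ times the reciprocal of the speed of the corresponding grim reaper.
\item $u(\cdot,t)\to\pm\infty$ at both endpoints, since each end is asymptotic to a grim reaper end. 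Consequently the graph is proper and hence complete.
\end{enumerate}

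The main point requiring care is (i). Graphicality in the sense of Theorem \ref{thm:main} only says that each time slice is a graph, so we must verify two further things: that the interval neither moves nor changes width in time, and that the curve is a graph over the whole strip rather than over a union of pieces. For the first, we would argue that the ends of a complete graph over $(a(t),b(t))$ are asymptotic to vertical lines $x=a(t)$ and $x=b(t)$. A barrier comparison with translating grim reapers then shows that these asymptotic lines are preserved under the flow. In the graphical setting one can alternatively use the strong maximum principle for $u_t = u_{xx}/(1+u_x^2)$ near the boundary: the endpoints are forced to be stationary because $\int_a^b u_t\,dx$ is finite and equals the turning of the angle $\arctan u_x$ across the interval.

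Combining the two cases gives the corollary.
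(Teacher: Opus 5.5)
Your argument is the paper's proof. The paper simply reads off from Theorem~\ref{thm:main} that the only noncompact, non-static possibilities are the translating grim reaper and the graphical trombone, and observes that both evolve as complete graphs over a fixed bounded interval. For the trombone, that interval is $(a_0,a_m)$ from Theorem~\ref{thm: graphicality}, where $V\in C^\infty\big((a_0,a_m)\times(-\infty,\infty)\big)$.

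Two side remarks in your write-up are wrong, though neither is load-bearing.

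First, the width of the trombone's interval is not $\pi$ divided by the speed of one grim reaper. In the paper's normalization the two tails are asymptotic to $\{x^2=a_0\}$ and $\{x^2=a_m\}$. These are the outer asymptotes of two \emph{different} outermost grim reapers, $\Gamma^1$ of width $a_1-a_0$ and $\Gamma^m$ of width $a_m-a_{m-1}$. So the width is $a_m-a_0=\sum_{i=1}^m (a_i-a_{i-1})$. It is still finite, which is all the corollary needs.

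Second, your closing paragraph does not actually prove that the interval is time-independent. The net turning $[\arctan u_x]_a^b$ lies in $[-\pi,\pi]$ for every graph, so its finiteness alone cannot pin the endpoints. The barrier comparison is only asserted, not carried out. You do not need this paragraph: just cite the fixed domain $(a_0,a_m)$ for all $t$ from Theorem~\ref{thm: graphicality} (equivalently, from the construction in \cite{AY21}).
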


To be specific, after a suitable rotation, an ancient graphical trombone converges to $(m+1)$ lines $\{x_2=a_0\}, \cdots, \{x_2=a_m\}$, where $m\geq 2$ and $a_0<a_1< \cdots <a_m$. Moreover, it remains as a complete graph over the open interval $(a_0,a_m)$ for all $t\in (-\infty,+\infty)$. Furthermore, as discussed in \cite{AY21}, the sub-flow $M_t \subset \{a_i<x_2<a_{i+1}\}$ converges to a translating grim repaer curve whose tip is located at 
\begin{equation}
(x_1,x_2)= \left(b_i+\frac{(-1)^i\pi }{a_i-a_{i-1}}t,\frac{a_i-a_{i-1}}{2}\right),
\end{equation}
after a possible reflection. See Figure~\ref{fig: trombone} for an illustration. Hence, Angenent-You \cite{AY21} constructed $(2m-1)$-parameter family of graphical ancient trombones, which are determined by $a_0,\cdots,a_m$ and $b_1,\cdots,b_m$. Therefore, our result \ref{thm:main} implies the following.

\begin{corollary}\label{cor:main4}
Let $\mathcal{M}$ be an ancient smooth curve shortening flow embedded in $\mathbb{R}^2$ with $\text{Ent}[\mathcal{M}]=m+1>2$. Then, $\mathcal{M}$ belongs to the $(2m-1)$-parameter family of graphical ancient trombones, up to rigid motions and time shifts.
\end{corollary}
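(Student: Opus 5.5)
The plan is to derive Corollary~\ref{cor:main4} directly from Theorem~\ref{thm:main}, by computing the entropy of each model flow in the list and ruling out all but the trombones.

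\textbf{Step 1: entropy of the non-trombone models.} I would record the entropy of each flow in the list.
A static line has entropy $1$. A shrinking circle has entropy $\sqrt{2\pi/e}<2$. A grim reaper translator has entropy $2$, since its blow-downs (backwards in time, or at large scale) converge to two parallel lines with multiplicity. A paper clip also has entropy $2$, since it is modeled backward in time on two grim reapers and its blow-down is a multiplicity-two line. All of these quantities are bounded by $2$. The hypothesis $\text{Ent}[\mathcal{M}]=m+1>2$ therefore excludes the first four cases of Theorem~\ref{thm:main}, so $\mathcal{M}$ is a graphical ancient trombone.

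\textbf{Step 2: entropy of a trombone.} Using the asymptotic description recalled above, a graphical trombone converges, after rotation, to the $k+1$ parallel lines $\{x_2=a_0\},\dots,\{x_2=a_k\}$, where $k$ is its number of glued grim reapers. I would show that its entropy equals $k+1$ by three observations.
\begin{itemize}
\item Entropy is monotone nonincreasing in $t$ by Huisken's monotonicity.
\item As $t\to-\infty$, the parabolic blow-down of $M_t$ is the multiplicity-$(k+1)$ line, whose Gaussian density is exactly $k+1$.
\item Hence $\sup_t \text{Ent}[M_t]=\lim_{t\to-\infty}\text{Ent}[M_t]=k+1$.
\end{itemize}
For the upper bound one could alternatively use that each time slice is a graph over $(a_0,a_k)$ intersecting each line $\{x_2=c\}$ in at most $k+1$ points, together with the lower semicontinuity and monotonicity of entropy. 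Matching $k+1=m+1$ gives $k=m$.

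\textbf{Step 3: matching the family.} The Angenent--You family of trombones with $m$ grim reapers is parametrized by $a_0,\dots,a_m$ and $b_1,\dots,b_m$. This is $2m+1$ parameters. Modding out translations in $x_1$ (equivalently, time shifts) and translations in $x_2$ removes two of them, which gives the stated $(2m-1)$-parameter family up to rigid motions and time shifts.

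\textbf{Main obstacle.} The delicate point is the identification of the entropy with the number of asymptotic lines. This requires knowing that the limit as $t\to-\infty$ of the entropy is realized by the blow-down, which is where the structural description of trombones from \cite{AY21}, or from the proof of Theorem~\ref{thm:main}, must be invoked.
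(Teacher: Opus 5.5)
Your argument is correct in substance, but it takes a longer route than the paper. The paper simply invokes Theorem~\ref{thm: classification}. Its hypothesis $\ent[\cM]=m+1\geq 3$ already encodes, via \cite{CSSZ24}, that $m+1$ is the multiplicity of the tangent flow at infinity. That multiplicity is the number of sheets and of asymptotic lines $a_0<\cdots<a_m$. So the conclusion is directly $\cM=\cT^{\pm}_{\mathbf a,\mathbf b}$ with exactly $m$ grim reapers, and no entropy of any model solution is ever computed.

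You instead go through the coarser Theorem~\ref{thm:main}, which costs two extra computations. First, the entropies of the line, circle, grim reaper and paper clip, to exclude them. Second, the entropy of a $k$-finger trombone, to force $k=m$. The only gain is that Theorem~\ref{thm:main} is used as a black box. The price is identifying the entropy of a noncompact flow with its Gaussian density at infinity.

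That identification must control all centres and scales, not just the blow-down at one base point. Write $F_{x_0,\lambda}(M_t)$ for the Gaussian integral in \eqref{def:entropy}. Huisken's monotonicity applies because the slices have linear length growth, and gives $F_{x_0,\lambda}(M_t)\leq F_{x_0,\lambda+t-s}(M_s)$ for $s<t$. The right-hand side tends to $k+1$ as $s\to-\infty$, since the blow-down is the multiplicity-$(k+1)$ line whatever the base point. The matching lower bound is the blow-down at a single point, as you say.

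Several side remarks are inaccurate, though none affects the membership claim.
\begin{itemize}
\item In Step 3, $x^1$-translations are \emph{not} equivalent to time shifts once $m\geq 2$. A time shift by $s$ sends $b_i$ to $b_i+(-1)^i\pi s/(a_i-a_{i-1})$, because the fingers travel at different speeds in alternating directions. An $x^1$-translation moves all $b_i$ equally. Your count $2m+1-2=2m-1$ is the count modulo spatial translations; time shifts act independently of it.
\item In the alternative upper bound of Step 2, a graph $x^1=V(x^2,t)$ meets each horizontal line $\{x^2=c\}$ at most once, so you presumably mean vertical lines. Even then, an intersection count needs a separate argument to bound Gaussian integrals.
\item The blow-down of a grim reaper or paper clip is one line of multiplicity two, not two parallel lines.
\item You should mention the discrete choice of tail direction $\pm$ in Theorem~\ref{thm: classification}. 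It is removed by the reflection $x^1\mapsto -x^1$.
\end{itemize}
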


We recall that, as with total curvature, the entropy is monotone decreasing in time for compact smooth flows. Hence, any blow-up limit at a singularity of a compact flow has finite-entropy. Therefore, ancient finite-entropy flows can be considered as singularity models of flows with finite-entropy, not only in the plane $\mathbb{R}^2$ but also in higher dimensions for the mean curvature flow. In particular, there is extensive literature on the classification of ancient mean curvature flows of hypersurfaces with multiplicity one tangent flow at infinity; see \cite{ADS19,ADS20,BC19,BC21,BL25,CDDHS22,CH24,CHH21,CHH22,CHH23,CHHW22,DH24} from a cylindrical tangent flow and other tangent flows \cite{CCMS24,CCS23}. Here, a \textit{tangent flow at infinity} is a subsequential limit of blow-downs $\mathcal{M}_{\lambda_i}$ as $\lambda_i\to +\infty$, where $\mathcal{M}_\lambda$ is the parabolic rescaling of $\mathcal{M}$ by $\lambda^{-1}$. Similarly, a \textit{tangent flow} at a singularity $X=(x_0,t_0)\in \mathcal{M}$ is a subsequential limit of blow-ups $\mathcal{M}^X_{\lambda_i}$ as $\lambda_i\to 0$, where $\mathcal{M}^X_\lambda := (M-X)_{\lambda}$. Indeed, Bamler-Kleiner \cite{BK23} proved that a closed mean curvature flow in $\mathbb{R}^3$ has a multiplicity one tangent flow. Also, it is well-known that a mean convex flow in $\mathbb{R}^{n+1}$ has a multiplicity one tangent flow as well. Hence, it is natural to study ancient flows with multiplicity one tangent flow at infinity as singularity models for mean curvature flows of hypersurfaces.

However, an immersed flow can develop a singularity whose tangent flow has multiplicity greater than one. For example, under the curve shortening flow in $\mathbb{R}^2$, a figure-eight curve develops a singularity whose tangent flow is a line with multiplicity two. See also Savas-Halilaj and Smoczyk \cite{ShS24LMCF} for examples in Lagrangian mean curvature flow exhibiting singularities with higher multiplicity. On the other hand, Neves \cite{Nev07} showed that the tangent flow at a singularity of the Lagrangian mean curvature flow with single-valued Lagrangian angle is a union of special Lagrangian cones. In particular, in $\mathbb{C}^2$ the tangent flow is a union of planes. It is therefore natural to expect that a singularity model for such Lagrangian mean curvature flows in $\mathbb{C}^2$ should be an ancient solution whose tangent flow at infinity is a union of planes, possibly with multiplicity. Also, the third named author \cite{Su22} constructed Lagrangian translators whose tangent flow at infinity is a union of planes with multiplicity two.
In the absence of higher multiplicity, there are classification and characterization results for ancient Lagrangian mean curvature flow, for instance, by Lambert--Lotay--Schulze \cite{LLS19} and by Lotay--Schulze--Székelyhidi \cite{LSSz24}; however, the higher multiplicity case remains largely unknown. We also note that the tangent flow at infinity of each ancient trombone solution is a line with multiplicity. Therefore, our result establishes, in the lowest dimensional setting, a classification theorem for ancient Lagrangian mean curvature flows whose tangent flow at infinity has higher multiplicity. As such, it provides some evidence toward a better understanding of singularity formation and ancient solutions in Lagrangian mean curvature flow.

 \section{Preliminaries and Notations}

In this section, we introduce notation and some definitions we will frequently use in what follows.

\

Let  $\gamma:\mathbb{L}\times (-\infty, T)\to\mathbb{R}^{2}$ be an ancient solution to the curve shortening flow. We denote the \emph{space-time track} of the flow $\gamma(\cdot, t)$ by $\mathcal{M} = \cup_{t\in(-\infty, T)}(M_{t}, t)$, where $M_{t} = \gamma(\mathbb{L}, t)$ is the \emph{time-slice} at time $t$. 

\ 

We first recall some terminology for some points or subsets in a time-slice $M_{t}$. For more details, see \cite[Section 2]{CSSZ24}. A \emph{vertex} is defined as a critical point of the curvature of $M_t$; it is called \emph{sharp} (resp. \emph{flat}) if it corresponds to a local maximum (resp. local minimum) of the curvature. A curve segment or a curved ray of $M_t$ is called an \emph{edge} if its endpoints are sharp vertices and no sharp vertices in its interior. Similarly, we consider the critical points of the distance function on $M_t$ from a fixed point in $\mathbb{R}^2$; a local maximum (resp. local minimum) is called a \emph{tip} (resp. \emph{knuckle}). A \emph{finger} is a curve segment of $M_t$ whose endpoints are knuckles and whose interior contains no knuckles, while a curved ray of $M_t$ is called a \emph{tail} if it has a unique endpoint that is a knuckle and contains no knuckles in its interior. Instead of the fingers defined above, we will usually work with an \emph{(extended) finger}, defined as the union of two edges sharing a sharp vertex (see Subsection~\ref{sec: glob.geom}).

\ 

Under the finite-entropy assumption
\begin{align}\label{def:entropy}
    {\rm Ent}[\mathcal{M}] := \sup_{t\in(-\infty, T)}\sup_{x_{0}\in\mathbb{R}^{2}, \lambda>0}\int_{M_{t}}\textstyle\frac{1}{\sqrt{4\pi\lambda}}e^{-\frac{|x-x_{0}|^{2}}{4\lambda}}\:d\mathcal{H}^{2}(x)<\infty,
\end{align}
it is shown in \cite{CSSZ24} that the tangent flow at infinity of $\mathcal{M}$ is a unique line through the origin, with integer multiplicity $m+1$. In particular, the rescaled flow $\overline{M}_{\tau} = e^{\tau/2}M_{-e^{-\tau}}$ converges locally smoothly to a line with multiplicity $m+1$ as $\tau\to-\infty$. Moreover, there is an \emph{$\varepsilon$-trombone time} $\tau_{\varepsilon} = -\log(-t_{\varepsilon})\ll -1$ such that whenever $\tau\leq\tau_{\varepsilon}$, the time-slice $\overline{M}_{\tau}$ has almost parallel (depending on $\varepsilon$) edges, and $\overline{M}_{\tau}$ consists of $m$ distinct fingers when $\overline{M}_{\tau}$ is complete; $m+1$ distinct fingers when $\overline{M}_{\tau}$ is closed, each of which has its vertex at the tip of an $\tfrac{\epsilon}{100}$-grim reaper. See \cite[Definition~7.1]{CSSZ24} for the explicit definition.

\ 

We now introduce the notation and conventions used throughout this paper. Let $(x^{1}, x^{2})$ (or sometimes $(y^{1}, y^{2})$ when analyzing the rescale flow) be the coordinates of $\mathbb{R}^{2}$. We will assume that the tangent flow at infinity of an ancient finite-entropy curve shortening flow $\mathcal{M}$ is given by $\{x^{2} = 0\}$ with multiplicity $m+1\geq 3$. For each extended finger $\Gamma_{t}$, there is an associated \emph{finger region} $F(t)$ given by the region enclosed by $\Gamma_{t}$ and the $x^{2}$-axis. By the $C^{\infty}_{loc}$-convergence of $\overline{M}_{\tau}$ to the multiplicity $m+1$ line $\{x^{2} = 0\}$, there is a graphical radius $\rho(\tau)\to\infty$ as $\tau\to-\infty$ such that whenever $\tau\ll -1$, $\overline{M}_{\tau}\cap B_{2\rho(\tau)}$ is a disjoint union of graphical components $\overline{\Sigma}_{\tau}$ over the $x^{1}$-axis, which we call \emph{sheets}. For the unrescaled flow, the corresponding sheets will be denoted by $\Sigma_{t}$. Whenever a curve (segment) is graphical, we call the corresponding graph function the \emph{profile function}.

\section{Sharp asymptotic behaviour of sheets}
\subsection{Setting up}\label{subsec: setup}
Suppose that $A, K_0 >0$, $\vare_0\in (0,\tfrac{1}{100})$, $\tau_0\leq 0$ and a Lipschitz continuous positive function $\rho(\tau)> 10$ is defined for $\tau\leq \tau_0$ such that $\rho(\tau)$ tends to $\infty$ as $\tau$ goes to $-\infty$ and for a.e. $\tau \leq \tau_0$
\begin{align}\tag{R1}\label{hyp: bdd.log.d.rho.1}
    -\tfrac12 \leq \tfrac{\rho'}{\rho}\leq 0.
\end{align}
Suppose that for all $\tau\leq \tau_0$, $u(y, \tau)$ is a profile function of a sheet $\bar\Sigma_\tau$ of rescaled CSF $\bar{M}_\tau$ defined on the interval $(-2\rho(\tau), 2\rho(\tau))$, in particular, $u$ is a solution of the graphical rescaled CSF
\begin{equation}\tag{H1}\label{eq: r.gr.CSF}
    u_{\tau} = \frac{u_{yy}}{1+u_{y}^{2}} - \frac{y}{2}u_{y} + \frac{u}{2} = \cL u - \frac{u_y^2 u_{yy}}{1 + u_y^2}\quad\mbox{on}\quad y\in \big( -2\rho(\tau) , 2\rho(\tau) \big),
\end{equation}
where $\cL := \partial_{y}^2 - \frac{y}{2}\partial_{y} + \frac{1}{2}$ is the linearized operator.
Furthermore, assume $u$ satisfies the bounds:
\begin{align}\tag{H2}\label{hyp: bounds}
    \lVert u(\cdot, \tau)\rVert_{L^\infty((-2\rho(\tau), 2\rho(\tau)))} \leq A e^{\vare_0(\tau-\tau_0)},\quad 
    \|u_y(\cdot, \tau)\|_{L^\infty((-2\rho(\tau), 2\rho(\tau)))} \leq \vare_0,
\end{align}
and the decay condition:
\begin{align}\tag{H3}\label{hyp: decay.cond} 
    \lVert u_{yy}(\cdot, \tau)\rVert_{L^\infty((-2\rho(\tau), 2\rho(\tau)))} \leq K_0/(2\rho(\tau)).
\end{align}
Thinking about the example of translating grim reapers, the lower bound in hypothesis \eqref{hyp: bdd.log.d.rho.1} is reasonable and optimal.
For simplicity of presentation, we may assume that $\tau_0 = 0$ by translating $\cM$ in time and applying an appropriate parabolic rescaling  without changing any of the above hypotheses.

\bigskip

Now, we state the main theorem of this section.
\begin{theorem}[general sharp asymptotic behavior]\label{thm: general asymp line}
Assume $\rho$ satisfies \eqref{hyp: bdd.log.d.rho.1}, and that $u$ satisfies \eqref{hyp: bounds} and \eqref{hyp: decay.cond} in Subsection~\ref{subsec: setup} with constants $A, \vare_0,$ and $K_0$.  
We further assume the following hypotheses on $\rho$: there exist $\tau_1\leq 0$, and \emph{positive} constants $\mu \leq  \tfrac12$ and $B = B(A, K_0, \mu) \gg 1$ such that 
\begin{align}\tag{R2}\label{hyp: ini.cond}
    \rho(0)\geq B,
\end{align}
and
for a.e. $\tau\leq \tau_1$,
\begin{align}\tag{R3}\label{hyp: bdd.log.d.rho.2}
    \tfrac{\rho'(\tau)}{\rho(\tau)} \leq -\mu.
\end{align}
Then, there exists a number $a\in \R$ such that for all $\tau \leq \tau_1$
\begin{align*}
    \|e^{-\tau/2}\hat u - a\|_\cH^2  \leq (1+ a^2) e^{-\rho^2/36}.
\end{align*}
Furthermore, there exists a numerical constant $\beta\in (0,\tfrac{1}{98})$ with the following significance: 
given any $R \geq 1$, there is $\tau_R \leq \tau_1$ $($depending also on $a$) such that for all $\tau\leq \tau_R$,
\begin{equation*}
    \sup_{|y|\leq R}  |e^{-\frac{\tau}{2}}u(y,\tau)-a|  \leq e^{-\beta \rho^2}.
\end{equation*}

\end{theorem}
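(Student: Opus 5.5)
We follow the standard Merle–Zaag / Angenent–Daskalopoulos–Sesum scheme in the Gaussian space $\cH=L^2(\R,e^{-y^2/4}dy)$. Let $\hat u := u\,\varphi(y/\rho(\tau))$, where $\varphi$ is a cutoff equal to $1$ on $[-1,1]$ and supported in $[-2,2]$. The spectrum of $\cL$ is $\{\tfrac12-\tfrac k2\}_{k\ge0}$, with Hermite eigenfunctions. The unstable mode is $h_0=1$ (eigenvalue $\tfrac12$), the neutral mode is $h_1=y$ (eigenvalue $0$), and all other modes are stable (eigenvalues $\le -\tfrac12$). We decompose $\hat u=U_++U_0+U_-$ accordingly.

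First we derive the equation for $\hat u$:
\[
\hat u_\tau=\cL\hat u+E,
\]
where $E$ collects the nonlinear term $-\varphi\, u_y^2u_{yy}/(1+u_y^2)$ and the cutoff error terms. The cutoff errors come from $\varphi_\tau$ (via $\rho'/\rho$, bounded by (R1)) and from $\varphi',\varphi''$; they are supported in $\rho\le|y|\le2\rho$. By (H2)–(H3), these give
\[
\|E_{\rm cut}\|_\cH\lesssim A e^{\vare_0\tau}e^{-\rho^2/8}.
\]
The nonlinear term is bounded pointwise by $\vare_0 |u_y|\,|u_{yy}|$, and $|u_{yy}|\le K_0/\rho$. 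Using $\|u_y\|\lesssim \|\hat u\|_{\cH^1}$ plus cutoff terms, we get
\[
|\langle E_{\rm nl},\hat u\rangle|\le \tfrac{C K_0}{\rho}\|\hat u\|_{\cD}^2.
\]
Since $\rho\ge B\gg1$ by (R2) together with monotonicity, this is absorbed. Standard ODE inequalities for $U_\pm,U_0$ then show that, as $\tau\to-\infty$, the unstable mode dominates. Indeed $\|\hat u\|\le Ae^{\vare_0\tau}$ rules out decay rates slower than $e^{\tau/2}$ in the wrong direction: the neutral mode and the stable modes would have to grow backward in time, contradicting (H2).

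Next we set $a(\tau):=e^{-\tau/2}\langle \hat u,1\rangle/\|1\|^2$. Then
\[
|a'(\tau)|\le e^{-\tau/2}\|E\|_\cH.
\]
To get the \emph{sharp} error $e^{-\rho^2/36}$, we work with $w:=e^{-\tau/2}\hat u-a$, which is orthogonal to $h_0$ and satisfies
\[
w_\tau=(\cL-\tfrac12)w+e^{-\tau/2}E^\perp .
\]
The operator $\cL-\tfrac12\le-\tfrac12$ on the orthogonal complement of $h_0$, so integrating backward from $-\infty$ gives
\[
\|w(\tau)\|\lesssim\int_{-\infty}^{\tau}e^{(\sigma-\tau)/2}\,e^{-\sigma/2}\|E(\sigma)\|\,d\sigma .
\]
The key point is that the neutral/stable modes have no room: backward in time they would grow relative to $e^{\tau/2}$, while $w$ must stay bounded. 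We must also bootstrap the bound $e^{-\tau/2}\|E\|\lesssim(1+|a|)e^{-\rho^2/16}$. This uses the fact that $u\approx ae^{\tau/2}$ on $|y|\le2\rho$, which follows from an $L^\infty$ bound derived from the $\cH$ bound via a local parabolic estimate, together with the nonlinear term being quadratically small. Then (R3), i.e.\ $\rho(\sigma)\ge\rho(\tau)e^{\mu(\tau-\sigma)}$ for $\sigma\le\tau\le\tau_1$, makes the integral converge and be dominated by its endpoint. This yields $\|w\|^2\le(1+a^2)e^{-\rho^2/36}$ after absorbing constants using $B\gg1$. The same estimate shows $a(\tau)$ converges as $\tau\to-\infty$, and we take $a$ to be the limit; replacing $a(\tau)$ by this constant $a$ costs an error $\int|a'|$ of the same order.

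Finally, the pointwise statement comes from converting $\cH$ closeness into $L^\infty$ closeness on $|y|\le R$. The function $v=e^{-\tau/2}u-a$ satisfies a uniformly parabolic linear equation with coefficients that are bounded for $|y|\le 2R$ (because $u_y$ is small), so local $L^2\to L^\infty$ parabolic estimates give
\[
\sup_{|y|\le R}|v|\le C_R\sup_{[\tau-1,\tau]}\|w\|_{\cH}+\text{(cutoff error)}\le C_R(1+|a|)e^{-\rho^2/72}.
\]
For $\tau\le\tau_R$, $\rho$ is so large that $C_R(1+|a|)e^{-\rho^2/72}\le e^{-\beta\rho^2}$, with for instance $\beta=1/100$. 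Using (R1), $\rho(\tau-1)\le e^{1/2}\rho(\tau)$, which controls the time-shift loss in the exponent; this is why $\beta<1/98$ is needed.

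The main obstacle is the second step, which requires closing the bootstrap so that the error is genuinely Gaussian-small in $\rho$ rather than merely $O(\rho^{-1})$. This in turn requires showing that the nonlinear term has no linear-order contribution once $u$ is known to be close to the constant mode.
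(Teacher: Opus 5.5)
\textbf{Verdict: there is a genuine gap, in the sharp-rate step you yourself flag as the main obstacle.}

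Your first stage (cut-off, Merle--Zaag dichotomy, unstable-mode dominance, extraction of $a$) is essentially the paper's. One correction there: the neutral alternative is not excluded because that mode grows backward. It is excluded because in that alternative $\lvert(\log W_0)'\rvert$ is bounded by the error coefficient, which is integrable on $(-\infty,\tau_1]$ thanks to (R3), so $W_0$ cannot tend to $0$.

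\textbf{Where the argument fails.} Your Duhamel bound for $w=e^{-\tau/2}\hat u-a(\tau)$ on $h_0^\perp$ is fine as such. The boundary term at $-\infty$ vanishes because $e^{\sigma/2}\|w(\sigma)\|_\cH\le\|\hat u(\sigma)\|_\cH\to0$; boundedness of $w$ is neither known nor needed. But the bound gives a Gaussian rate only if the forcing $e^{-\tau/2}E$ is Gaussian-small in absolute terms, and your proposed bootstrap for this cannot be closed, for two reasons.
\begin{itemize}
\item An $\cH$-bound of size $e^{-\rho^2/72}$ gives nothing better than the a priori bounds on $\lvert y\rvert\ge\rho/2$, where the weight is below $e^{-\rho^2/16}$. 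So no local parabolic estimate converts it into $u\approx ae^{\tau/2}$ on $\lvert y\rvert\le2\rho$.
\item On $\lvert y\rvert\le\rho$ your $E$ is exactly the nonlinearity. Since it involves only $u_y,u_{yy}$, there you have $e^{-\tau/2}E_{\mathrm{nl}}=-\tfrac{u_y^2}{1+u_y^2}\,w_{yy}$. Contrary to your last sentence, the nonlinearity does have a linear-order contribution in the deviation $w$. Its coefficient is known a priori only to be $O(\rho^{-1})$, from $\lvert u_y\rvert\le(A+K_0)\rho^{-1/2}$. Improving that coefficient requires pointwise control of $u_y$, which is the conclusion itself. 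So an absolute bound like $e^{-\tau/2}\|E\|_\cH\lesssim(1+\lvert a\rvert)e^{-\rho^2/16}$ is circular.
\end{itemize}

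\textbf{The missing idea} is to treat the nonlinearity as a \emph{relative} error --- the kind of bound you already used in your first stage --- and absorb it. Then the only absolute forcing is the cut-off error, which lives where the Gaussian weight is tiny. In your framework this closes directly.
\begin{itemize}
\item On $h_0^\perp$ you have $\langle(\cL-\tfrac12)w,w\rangle_\cH=-\|w_y\|_\cH^2\le-\tfrac12\|w\|_\cH^2$.
\item Since $P_+\hat u$ is constant in $y$, the paper's splitting $E_1=\hat u_y\tilde E$, $\tilde E=-\hat u_yu_{yy}/(1+u_y^2)$, gives $e^{-\tau/2}E_1=w_y\tilde E$. One integration by parts then gives $\lvert\langle w_y\tilde E,w\rangle_\cH\rvert=\tfrac12\lvert\langle w^2,\tilde E_y-\tfrac y2\tilde E\rangle_\cH\rvert\le\tfrac12\rho\|\tilde E\|_{C^1}\|w\|_\cH^2\lesssim\rho^{-1/2}\|w\|_\cH^2$. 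This uses $\lvert u_{yyy}\rvert\le K_1\rho^{-2}$; your $\cD$-type bound, absorbed into $-\|w_y\|_\cH^2$, works too. This term is absorbed.
\item The remaining forcing is supported in the cut-off annulus and is $\lesssim\mathrm{poly}(\rho)\,e^{-\rho^2/32}$ in $\cH$. The factor $e^{-\tau/2}$ is at most polynomial in $\rho$ by (R3).
\item The same integration by parts gives $\lvert a'(\tau)\rvert\lesssim\rho^{-1/2}\|w\|_\cH$ plus a cut-off term.
\end{itemize}
Integrating with (R3) then yields the Gaussian rate, in fact without needing the Merle--Zaag dichotomy at all.

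\textbf{Comparison with the paper.} The paper instead stays within the Merle--Zaag framework. It observes that $u-ae^{\tau/2}$ solves the same equation and reruns the system for $\hat w$ with the relative bound $2\lvert\langle E,P_*\hat w\rangle_\cH\rvert\le K\rho^{-1/2}\big(\|\hat w\|_\cH^2+\rho^2e^{-\rho^2/16}\big)$. It then integrates the $P_+$-equation using $e^{-\tau}\|P_+\hat w\|_\cH^2\to0$ and (R3).

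\textbf{Minor point in the $L^\infty$ step.} The look-back window costs nothing: $\rho$ is nonincreasing, so $\rho(\sigma)\ge\rho(\tau)$ for $\sigma\le\tau$. The inequality $\rho(\tau-1)\le e^{1/2}\rho(\tau)$ points the wrong way and is not needed.
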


\bigskip

As a consequence of Theorem~\ref{thm: general asymp line}, we get the following theorem and corollary for ancient flow $\cM$ with finite $\ent[\cM] \geq 3$.
We denote by $u^0(y,\tau)$, $u^1(y,\tau),\cdots, u^m(y,\tau)$ the profile of functions of sheets of $\bar{M}_\tau$ in \cite[Theorem 1.2]{CSSZ24}.

\begin{theorem}\label{thm: sharp asympt line}
    Let $\mathcal{M}$ be an ancient flow whose tangent flow at infinity is the line $\{x^2=0\}$ with $\ent[\cM] = m +1 \geq 3$.
    There are $\{a_0, a_1, \cdots,a_m \}\subset \mathbb{R}$ and a numerical constant $\delta \in (0, \tfrac12)$ such that given $R \geq 1$ there is $\tau_R \ll -1$ $($depending also on $a_i$'s$)$ such that for all $\tau<\tau_R$,
    \begin{equation}
         \sup_{|y|\leq R}  |e^{-\frac{\tau}{2}}u^i(y,\tau)-a_i| \leq \exp(- e^{-2\delta \tau})
    \end{equation}
holds for each $i=0, 1,\cdots,m$.
\end{theorem}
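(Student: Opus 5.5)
The plan is to apply Theorem~\ref{thm: general asymp line} to each sheet profile $u^i$ separately, with graphical radius $\rho(\tau)$ growing like $e^{-\delta\tau}$. Then $e^{-\beta\rho^2}\le \exp(-e^{-2\delta\tau})$ once $\rho^2 \ge \beta^{-1}e^{-2\delta\tau}$, which can be absorbed by taking $\rho(\tau)=c\,e^{-\delta'\tau}$ with $\delta'$ slightly larger than $\delta$, or simply by renaming $\delta$.

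The first step is to produce such a $\rho$ together with the bounds (H1)--(H3) for every sheet. I would use the $\varepsilon$-trombone structure from \cite{CSSZ24}. For $\tau\le\tau_\varepsilon$, the rescaled curve $\bar M_\tau$ consists of $m$ (or $m+1$) fingers, each modelled on a grim reaper whose tips lie at unrescaled distance comparable to $|t|$. Rescaling by $e^{\tau/2}=|t|^{-1/2}$ puts the tips at distance roughly $|t|^{1/2}=e^{-\tau/2}$. On a ball of radius comparable to $e^{-\tau/2}$, minus a margin, each sheet is therefore a graph over the $y^1$-axis. In that region the unrescaled sheets are almost flat pieces of grim reapers of bounded width, so the curvature is small. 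This should give $|u_y|\le\varepsilon_0$, $|u|\le Ae^{\varepsilon_0\tau}$ (the sheets sit at height $O(1)$ in unrescaled coordinates, i.e.\ $O(e^{\tau/2})$ after rescaling), and $|u_{yy}|\lesssim K_0/\rho$.

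Next I would take $\rho(\tau)=c\,e^{-\delta\tau}$ with $\delta<1/2$ small (say $\delta=1/4$), modified to be Lipschitz near $\tau_0$. This choice satisfies (R1) and (R3) with $\mu=\delta$. After a time translation and parabolic rescaling as in Subsection~\ref{subsec: setup}, (R2) $\rho(0)\ge B$ holds. Taking $\delta$ strictly below $1/2$ keeps the graphical region well inside the grim-reaper-tip scale $e^{-\tau/2}$, which gives room for the derivative estimates.

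With the hypotheses in place, Theorem~\ref{thm: general asymp line} yields for each $i$ a constant $a_i$ with
\[
\sup_{|y|\le R}|e^{-\tau/2}u^i-a_i|\le e^{-\beta\rho^2}=\exp\bigl(-\beta c^2 e^{-2\delta\tau}\bigr).
\]
Replacing $\delta$ by a slightly smaller numerical constant $\delta$ and shrinking $\tau_R$ gives $\beta c^2e^{-2\delta_{\rm old}\tau}\ge e^{-2\delta\tau}$, which is exactly the claimed bound. The ordering and distinctness of the $a_i$ are not required by the statement.

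The main obstacle is verifying the decay condition (H3), $|u_{yy}|\le K_0/(2\rho)$, uniformly up to the edge of the radius $2\rho(\tau)=2ce^{-\delta\tau}$, together with the $u$-bound in (H2). I would derive both from the curvature estimates for $\varepsilon$-trombones in \cite{CSSZ24}. In unrescaled variables the curvature along the near-flat part of each edge is bounded by $C/|t|$-type quantities, or by a grim reaper comparison. Rescaling multiplies curvature by $e^{-\tau/2}$, so $|u_{yy}|\lesssim e^{-\tau/2}\cdot|t|^{-1/2}$-scale bounds must beat $e^{\delta\tau}$. If the available curvature bound is too weak, I would shrink $\delta$, using the exponential flatness of grim reaper wings away from the tip.
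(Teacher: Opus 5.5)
Your route is the paper's: apply Theorem~\ref{thm: general asymp line} to each sheet $u^i$ with graphical radius $\rho(\tau)=\tfrac12 e^{-\delta\tau}$. This radius satisfies \eqref{hyp: bdd.log.d.rho.1} and \eqref{hyp: bdd.log.d.rho.2} with $\mu=\delta$, and \eqref{hyp: ini.cond} once $\tau_0$ is shifted far enough back. You then shrink $\delta$ to turn $e^{-\beta\rho^2}$ into $\exp(-e^{-2\delta\tau})$.

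The thing to correct is how you justify \eqref{hyp: bounds} and \eqref{hyp: decay.cond}. The picture you invoke is not available at this stage: sheets at unrescaled height $O(1)$, finger tips at unrescaled distance comparable to $|t|$, and grim reapers of bounded width. In the paper these facts are Proposition~\ref{prop: solution in strip}, Proposition~\ref{prop: vertex optimal speed} and Lemma~\ref{lem: direction of vertex}. All three are derived afterwards from Corollary~\ref{cor: sharp asympt line}, so using them here would be circular.

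What you actually need is much weaker:
\begin{itemize}
\item rescaled height decay $|u|\le Ae^{\varepsilon_0\tau}$;
\item $|u_y|\le\varepsilon_0$;
\item $|u_{yy}|\le K_0/(2\rho)$ on $(-2\rho,2\rho)$ with $\rho\sim e^{-\delta\tau}$.
\end{itemize}
The paper takes these, together with the graphical radius $e^{-\delta\tau}$ itself, directly from \cite[Theorem 1.2]{CSSZ24} and \cite[Theorem 7.3]{CSSZ24}, at the cost of slightly decreasing $\delta$. With that citation replacing your heuristic, the argument is complete.
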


\bigskip

Rescaling back to the original CSF $\mathcal{M}$ via $-t = e^{-\tau}$, we obtain the following estimate for the functions 
\[ 
U^i(x, t) := \sqrt{-t} u^i\big( \tfrac{x}{\sqrt{-t}}, -\log (-t) \big), \quad i = 0, \ldots, m, \]
which represent the profiles of the sheets of $M_t$.
\begin{corollary} \label{cor: sharp asympt line}
Given $R \geq 1$, for $t \leq t_R:=-e^{-\tau_R}$, each sheet $\Sigma^i_t$ of $M_t$ associated with $u^i$ satisfies
\begin{equation}
    \sup_{\lvert x\rvert \leq R\sqrt{-t}}\: \lvert U^i(x, t) - a_i\rvert \leq  \exp(- \lvert t\rvert^{2\delta}).
\end{equation}
\end{corollary}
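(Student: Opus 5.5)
This is a direct rescaling of Theorem~\ref{thm: sharp asympt line}, so the plan is simply to substitute the change of variables and check that the estimate transforms as claimed. Fix $R\geq 1$ and let $\tau_R$ be given by Theorem~\ref{thm: sharp asympt line}. Set $t_R:=-e^{-\tau_R}$, so that $t\leq t_R$ is equivalent to $\tau=-\log(-t)\leq\tau_R$. Since the rescaled flow is $\bar M_\tau=e^{\tau/2}M_{-e^{-\tau}}$, the sheet $\Sigma^i_t$ of $M_t$ is the image of the sheet $\bar\Sigma^i_\tau$ under dilation by $\sqrt{-t}=e^{-\tau/2}$. Hence its profile is exactly
\[
U^i(x,t)=\sqrt{-t}\,u^i\big(\tfrac{x}{\sqrt{-t}},\tau\big)=e^{-\tau/2}u^i(y,\tau),\qquad y=\tfrac{x}{\sqrt{-t}}.
\]
This function is defined for $|x|\leq R\sqrt{-t}$, because $u^i(\cdot,\tau)$ is defined on $|y|<2\rho(\tau)$ and $\rho(\tau)\geq R$ once $\tau$ is sufficiently negative. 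If necessary, we shrink $\tau_R$ to guarantee this.

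The condition $|x|\leq R\sqrt{-t}$ is the same as $|y|\leq R$. Theorem~\ref{thm: sharp asympt line} therefore gives
\[
\sup_{|x|\leq R\sqrt{-t}}|U^i(x,t)-a_i|=\sup_{|y|\leq R}|e^{-\tau/2}u^i(y,\tau)-a_i|\leq \exp(-e^{-2\delta\tau}).
\]
Finally, $e^{-\tau}=-t=|t|$, so $e^{-2\delta\tau}=|t|^{2\delta}$, and the right-hand side equals $\exp(-|t|^{2\delta})$, which is the claimed bound.

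There is no real obstacle here. The only point needing care is the bookkeeping that identifies the unrescaled sheet $\Sigma^i_t$ with the rescaled sheet carrying the profile $u^i$, and that confirms the whole ball $|x|\leq R\sqrt{-t}$ lies inside the graphical region. Both follow from the definition of sheets and from $\rho(\tau)\to\infty$ as $\tau\to-\infty$.
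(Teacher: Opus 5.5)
Your proposal is correct and is the same argument the paper uses: the corollary follows by substituting $-t=e^{-\tau}$ and $x=\sqrt{-t}\,y$ into Theorem~\ref{thm: sharp asympt line}, so that $U^i(x,t)-a_i=e^{-\tau/2}u^i(y,\tau)-a_i$, $\{|x|\leq R\sqrt{-t}\}=\{|y|\leq R\}$, and $e^{-2\delta\tau}=|t|^{2\delta}$. The theorem is stated for $\tau<\tau_R$ strictly, while the corollary allows $\tau=\tau_R$; your remark about shrinking $\tau_R$ slightly (or using continuity) covers this.
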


\bigskip

To process the spectral analysis, we introduce the Gaussian $L^2$-inner product
\begin{equation*}
    \langle f,g\rangle_\mathcal{H}:=\int_{\mathbb{R}}  \frac{fg}{\sqrt{4\pi}} e^{-\frac{y^2}{4}}dy,
\end{equation*}
associated with the Gaussian $L^2$-norm $\|\cdot\|_{\cH}$. 
Since $\mathcal{L}-\frac{1}{2}$ is the Ornstein–Uhlenbeck operator, $\mathcal{L}$ has an eigendecomposition of the Gaussian $L^2$-space; namely, there exists an $\cH$-O.N. basis $\{\varphi_i\}_{i = 1}^{\infty}$ of $\cH$ such that $\cL \varphi_i + \lambda_i \varphi_i = 0$ and $\lambda_i \to \infty$. 
In particular, $\varphi_1(y):=1$ is the only unstable ($\lambda_1 = -\tfrac12$) eigenfunction of $\mathcal{L}$; $\varphi_2(y):=2^{-\frac{1}{2}}y$ is the only neutral ($\lambda_2 = 0$) eigenfunction of $\mathcal{L}$; $\varphi_3(y):=2^{-\frac{3}{2}}(y^2-2)$ is the stable eigenfunction with the least positive eigenvalue $\lambda_3=\frac{1}{2}$. Therefore, for any $f\in \cH$, we can decompose
\begin{align*}
    f = P_+ f + P_0 f + P_- f 
\end{align*}
where $P_+ f =  \langle f , 1\rangle_\cH$, $P_0 f = \tfrac12 \langle f , y \rangle_\cH \, y$, and $P_- f$ are the projections onto the stable, neutral, and stable modes, respectively.
Furthermore, since $\lambda_3 = \tfrac12$, we have
\begin{align}\label{eq: LP-}
    \langle \cL P_- f, P_- f \rangle_\cH \leq -\tfrac12 \|P_- f\|_\cH^2.
\end{align}

\bigskip

\subsection{Spectral analysis}
We begin with several decay estimates for $u$.
The following elementary lemma is useful for the later gradient analysis.
\begin{lemma}
    Let $I = (-\ell, \ell)$ with $\ell>4$ and let $f\in C^2\big((-\ell, \ell)\big)$. 
    For any $\xi\in (0, \ell)$,
    \begin{align}\label{eq: interp.ineq.1}
        \lVert f'\rVert_{L^\infty(I)} 
        \leq \tfrac{2}{\xi} \lVert f\rVert_{L^\infty(I)} + \tfrac{\xi}{2}\lVert f''\rVert_{L^\infty(I)}.
    \end{align}
    Furthermore, if $\ell\,\lVert f''\rVert_{L^\infty(I)} \leq K_0$ for some $K_0 < \infty$, then
    \begin{align}\label{eq: interp.ineq.2}
        \lVert f'\rVert_{L^\infty(I)} 
        \leq \tfrac{1}{\sqrt{\ell}}\big(\lVert f\rVert_{L^\infty(I)} + K_0\big).
    \end{align}
\end{lemma}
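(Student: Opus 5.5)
The plan is a one-sided Taylor expansion at each point, followed by a specific choice of $\xi$ to get \eqref{eq: interp.ineq.2}.

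\emph{Proof of \eqref{eq: interp.ineq.1}.} Fix $y\in I$ and $\xi\in(0,\ell)$.

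First choose a direction that keeps the Taylor step inside $I$. If $y\le 0$, set $h=\xi$; if $y>0$, set $h=-\xi$. In the first case $y+h\in(-\ell+\xi,\xi)\subset I$, and the second case is symmetric, so in either case $y+h\in I$.

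Next, Taylor's theorem with Lagrange remainder gives some $\eta$ between $y$ and $y+h$ (hence $\eta\in I$) with
\begin{equation*}
f(y+h)=f(y)+hf'(y)+\tfrac{h^2}{2}f''(\eta).
\end{equation*}
Solving for $f'(y)$ and using $|h|=\xi$ yields
\begin{equation*}
|f'(y)|\le \tfrac{|f(y+h)|+|f(y)|}{\xi}+\tfrac{\xi}{2}|f''(\eta)|\le \tfrac{2}{\xi}\lVert f\rVert_{L^\infty(I)}+\tfrac{\xi}{2}\lVert f''\rVert_{L^\infty(I)}.
\end{equation*}
Taking the supremum over $y\in I$ gives \eqref{eq: interp.ineq.1}.

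\emph{Proof of \eqref{eq: interp.ineq.2}.} The only step that needs care is choosing $\xi$ so that the constants come out exactly as stated.

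Take $\xi=2\sqrt{\ell}$. The hypothesis $\ell>4$ gives $\sqrt{\ell}>2$, hence $2\sqrt{\ell}<\ell$, so this $\xi$ is admissible in \eqref{eq: interp.ineq.1}.

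With this choice, the first term becomes $\tfrac{2}{\xi}\lVert f\rVert_{L^\infty(I)}=\ell^{-1/2}\lVert f\rVert_{L^\infty(I)}$. For the second term, the bound $\lVert f''\rVert_{L^\infty(I)}\le K_0/\ell$ gives
\begin{equation*}
\tfrac{\xi}{2}\lVert f''\rVert_{L^\infty(I)}\le \sqrt{\ell}\cdot\tfrac{K_0}{\ell}=\ell^{-1/2}K_0.
\end{equation*}
Adding the two terms yields \eqref{eq: interp.ineq.2}.

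There is no real obstacle in this argument. The one point to note is that the choice $\xi=\sqrt{\ell}$ would also be admissible but would only give constants $2$ and $\tfrac12$. The choice $\xi=2\sqrt{\ell}$ balances both constants to $1$, and it is admissible precisely because $\ell>4$.
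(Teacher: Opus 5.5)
Your proof is correct and follows the paper's argument: a one-sided Taylor expansion with step $\pm\xi$ kept inside $I$, followed by the choice $\xi=2\sqrt{\ell}<\ell$. The paper uses the integral form of the remainder instead of the Lagrange form, which makes no difference. There is one trivial slip: for $y\le 0$, the point $y+\xi$ lies in $(-\ell+\xi,\xi]$ rather than the open interval $(-\ell+\xi,\xi)$, but this is still contained in $I$ since $\xi<\ell$.
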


\begin{proof}
    Let $x\in (-\ell, \ell)$ and let $\xi\in (0, \ell)$. 
    At least one of $x\pm \xi$ lies in $(-\ell, \ell)$ and let $y$ be such $x \pm \xi$.
    By Taylor's theorem, 
    \begin{equation*}
        f(y) = f(x) + (y-x)f'(x) + \int_{x}^{y}(y - s)f''(s)\, ds,
    \end{equation*}
    and thus
    \begin{align*}
        \lvert f'(x)\rvert = \left\lvert \tfrac{f(y)-f(x)}{y-x} - \tfrac{1}{y-x}\int_{x}^y (y-s) f''(s) \, ds \right\rvert
        \leq \tfrac{2}{\xi}\lVert f\rVert_{L^\infty((0, \ell))} + \tfrac{\xi}{2} \lVert f''\rVert_{L^\infty((0, \ell))}.
    \end{align*}
    Furthermore, \eqref{eq: interp.ineq.2} can be derived from \eqref{eq: interp.ineq.1} with $\xi = 2\sqrt{\ell} < \ell$ provided that the extra condition holds.
\end{proof}

\bigskip
Applying inequality \eqref{eq: interp.ineq.2}, the hypotheses \eqref{hyp: bounds} and \eqref{hyp: decay.cond} yield a gradient decay estimate: for all $\tau \leq \tau_0$,
\begin{align}\label{eq: du.decay}
    \lVert u_y(\cdot, \tau)\rVert_{L^\infty((-2\rho(\tau), 2\rho(\tau)))} 
    \leq \tfrac{1}{\sqrt{\rho}}\big( A + K_0\big).
\end{align}
Therefore, when $\rho \gg (A + K_0)^2$, the small gradient condition stated in \eqref{hyp: bounds} becomes redundant.

\ 

\begin{proposition}\label{prop: d3.u decay}
    Suppose that $\rho$ and $u$ satisfy \eqref{hyp: bdd.log.d.rho.1}, 
    \eqref{eq: r.gr.CSF}
    ,\eqref{hyp: bounds}, and \eqref{hyp: decay.cond}. There exists a universal constant $K_1$ depending only on $K_0$ such that for all $\tau \leq \tau_0$
    \begin{align}\label{eq: d3u.decay}
        \lVert u_{yyy}(\cdot, \tau)\rVert_{L^\infty((-\rho, \rho))} \leq K_1/\rho^2.
    \end{align}
\end{proposition}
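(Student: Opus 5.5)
The plan is to go back to the unrescaled flow and use interior parabolic estimates at the natural spatial scale $r\sim\rho(\tau)\sqrt{-t}$. On that scale the curvature is of order $1/r$ by \eqref{hyp: decay.cond}. A scale-invariant interior estimate will then give one more derivative with the gain $1/r^2$, and rescaling back turns this into $K_1/\rho^2$.

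\emph{Step 1: unrescaled sheet and its hypotheses.} Write $t=-e^{-\tau}$ and $U(x,t)=\sqrt{-t}\,u(x/\sqrt{-t},\tau)$. Then $U$ solves the graphical CSF
\[
U_t=\frac{U_{xx}}{1+U_x^2}\quad\text{on } |x|<2\rho(\tau)\sqrt{-t}.
\]
The relations $U_x=u_y$, $U_{xx}=(-t)^{-1/2}u_{yy}$ and $U_{xxx}=(-t)^{-1}u_{yyy}$ hold. So \eqref{hyp: bounds} and \eqref{hyp: decay.cond} give $|U_x|\le\vare_0$ and $|U_{xx}|\le K_0/(2\rho(\tau)\sqrt{-t})$, and the target estimate becomes
\[
|U_{xxx}(x,t)|\le \frac{K_1}{\rho(\tau)^2(-t)}\quad\text{for } |x|<\rho(\tau)\sqrt{-t}.
\]

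\emph{Step 2: a parabolic cylinder on which the hypotheses hold.} Fix $(x_0,t_0)$ with $|x_0|<\rho(\tau_0')\sqrt{-t_0}$, where $\tau_0'=-\log(-t_0)$. Set $r:=c\,\rho(\tau_0')\sqrt{-t_0}$ with a small numerical $c$, and consider the cylinder $P=[x_0-r,x_0+r]\times[t_0-r^2,t_0]$.

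For $t\in[t_0-r^2,t_0]$ with rescaled time $\tau'\le\tau_0'$, two monotonicities help. First, $\rho(\tau')\ge\rho(\tau_0')$, since $\rho'\le0$ by \eqref{hyp: bdd.log.d.rho.1}. Second, $\sqrt{-t}\ge\sqrt{-t_0}$. Hence $[x_0-r,x_0+r]\subset\{|x|<2\rho(\tau')\sqrt{-t}\}$, so on all of $P$ the sheet is a single graph. On $P$ we also have
\[
|U_x|\le\vare_0,\qquad |U_{xx}|\le \frac{K_0}{2\rho(\tau')\sqrt{-t}}\le \frac{K_0}{2\rho(\tau_0')\sqrt{-t_0}}=\frac{cK_0}{2r}.
\]
This is where the one-sided condition $\rho'\le0$ is used. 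Because the flow is ancient, no lower bound on $t_0-r^2$ is needed.

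\emph{Step 3: the interior estimate.} Rescale $P$ to unit size by $\tilde U(\xi,s)=r^{-1}U(x_0+r\xi,t_0+r^2 s)$. On $[-1,1]\times[-1,0]$, $\tilde U$ solves the same equation with $|\tilde U_\xi|\le\vare_0$ and $|\tilde U_{\xi\xi}|\le cK_0/2$.

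The function $w=\tilde U_\xi$ satisfies the divergence-form equation $w_s=(w_\xi/(1+w^2))_\xi$. Equivalently, $w_s=a(\xi,s)w_{\xi\xi}+b\,w_\xi$ with $a=(1+w^2)^{-1}\in[\tfrac12,1]$ and $b=-2w\,w_\xi/(1+w^2)^2$. Here $b$ is bounded by the $\tilde U_{\xi\xi}$ bound. The equation gives $|w_s|=|(\tilde U_s)_\xi|$, which is bounded in terms of $\tilde U_{\xi\xi}$ and $\tilde U_{\xi\xi\xi}$.

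I will bootstrap as follows. The Krylov–Safonov/Nash–Moser Hölder estimate for $w_\xi=\tilde U_{\xi\xi}$ gives a $C^{\alpha}$ bound on $a$ and $b$ on a smaller cylinder. Interior Schauder estimates then give $|w_{\xi\xi}(0,0)|\le C(K_0)$. Alternatively, one can apply the Ecker–Huisken/Shi-type interior estimate to $k_t=k_{ss}+k^3$, which gives $|k_s|\le C/r^2$ whenever $|k|\le C/r$ on $P$.

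Scaling back, $|U_{xxx}(x_0,t_0)|\le C(K_0)/r^2=C(K_0)c^{-2}/(\rho^2(-t_0))$. This is the claimed bound with $K_1=C(K_0)c^{-2}$.

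\emph{Main obstacle.} I expect Step 3 to be the delicate part: one needs a genuinely scale-invariant interior derivative estimate for the quasilinear equation whose constant depends only on $K_0$ (and on $\vare_0<1/100$), not on $A$. The bound $\|u\|_{L^\infty}\le Ae^{\vare_0(\tau-\tau_0)}$ from \eqref{hyp: bounds} must not enter, because in Step 3 only derivatives of $U$ appear and the equation is invariant under adding constants to $U$. Care is also needed to ensure that the whole backward cylinder stays inside the graphical region. Step 2 handles this using only $\rho'\le0$ and the growth of $\sqrt{-t}$ backward in time.
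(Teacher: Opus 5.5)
Your proposal is correct and is essentially the paper's argument. The paper also passes to the unrescaled flow and uses that $2\sqrt{-t}\,\rho$ is non-increasing in $t$ (from $\rho'\le 0$) to get the curvature bound $K_0/(2\sqrt{-t'}\rho(t'))$ on a whole backward parabolic ball of that radius. It then applies the Shi-type interior estimate \cite[Proposition 3.22]{Eck2004RTM} to get $|\kappa_s|\lesssim(\sqrt{-t'}\rho)^{-2}$ and rescales.

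The paper's only extra step, which you would need if you take your curvature route rather than the Schauder bootstrap, is converting back via $\bar\kappa_s = u_{yyy}(1+u_y^2)^{-2}-3u_y u_{yy}^2(1+u_y^2)^{-3}$. Its second term is $O(\vare_0K_0^2\rho^{-2})$ and is absorbed into $K_1$.
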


\begin{proof}
    Let $\cG = \cup_{t}\,\Sigma_t\times\{t\}$ denote the sub-flow of $\cM$ represented by $u$.
    Note that the curvature of $\bar\Sigma_\tau$ is given by $\bar\kappa = u_{yy}(1 + u_y^2)^{-\frac32}$.
    Then \eqref{hyp: bounds} and \eqref{hyp: decay.cond} imply that for $\tau\leq \tau_0$,
    \begin{align*}
        \sup\: \big\{ \lvert \bar\kappa(x) \rvert \::\: x\in B\big(0, 2\rho(\tau)\big) \cap \bar{\Sigma}_\tau \big\}\leq K_0 /(2\rho(\tau)),
    \end{align*}
    which is equivalent to that for any $t \leq -e^{-\tau_0}$
    \begin{align*}
        \sup\: \big\{ \lvert \kappa(x) \rvert \::\: x\in B\big(0, 2\sqrt{-t}\rho(t)\big) \cap \bar{\Sigma}_\tau \big\}\leq K_0 /(2\sqrt{-t}\rho(t)),
    \end{align*}
    where by abuse of notation $\rho(t)$ means $\rho\circ \tau(t)$.
    Since $2\sqrt{-t}\rho(t)$ is monotone decreasing in $t$, for any $t' \leq -e^{-\tau_0}$ we have the curvature estimate in the parabolic ball: for any $X\in P\big((0,t'), 2\sqrt{-t'}\rho(t')\big) \cap \cG$,
    \begin{align*}
        \lvert \kappa(X) \rvert \leq K_0 /(2\sqrt{-t'}\rho(t')).
    \end{align*}
    Then by Shi-type interior estimate \cite[Proposition 3.22]{Eck2004RTM}, there exists a numerical constant $c_1 = c_1(K_0)$ such that for any $t'\leq -e^{-\tau_0}$, for any $X\in P\big((0,t'), \sqrt{-t'}\rho(t')\big) \cap \cG$,
    \begin{align*}
        \big\lvert \kappa_s(X) \big\rvert \leq c_1/ (\sqrt{-t'}\rho(t'))^2.
    \end{align*}
    Rescaling it back to $\bar \Sigma_\tau$ gives that for any $\tau\leq \tau_0$, 
    \begin{align}\label{eq: d.kappa 2}
        \sup\: \big\{ \lvert \bar\kappa_s(x) \rvert \::\: x\in B\big(0, \rho(\tau)\big) \cap \bar{\Sigma}_\tau \big\}\leq c_1 /\rho(\tau)^2.
    \end{align}
    Note that 
    \begin{align*}
        \bar\kappa_s = u_{yyy}(1 + u_y^2)^{-2} - 3u_y(u_{yy})^2 (1 + u_y^2)^{-3},
    \end{align*}
    where by \eqref{hyp: bounds} and \eqref{hyp: decay.cond} the absolute value of the second term in the expansion of $\bar\kappa_s$ is bounded above by $3\vare_0K_0^2\rho^{-2}$. 
    Therefore, there exists a numerical constant $K_1 = K(K_0)$ such that $\lvert w_{yyy}\rvert \leq K_1/\rho^2$ in $(-\rho, \rho)$ for all $\tau \leq \tau_0$.
\end{proof}

\bigskip
We will extend $u(\cdot, \tau)$ to a function on $\R$ by applying a cut-off function.
Let $\eta:\mathbb{R} \to [0, 1]$ be a fixed cut-off function such that $\eta(s)\equiv 1$ for $|s|\leq 1/2$, and $\eta(s)\equiv 0$ for $|s|\geq 3/4$ and such that $\lvert \eta' \rvert \leq 8, \lvert \eta'' \rvert \leq 32$ everywhere.
Then, for all $y\in \R$ and $\tau \leq 0$, we define
\begin{equation*}
    \hat u(y, \tau) := u(y, \tau)\, \eta\big(y/\rho(\tau)\big),
\end{equation*}
and define the error term
\begin{equation*}
    E := \hat u_\tau - \cL \hat u.
\end{equation*}
A direct computation gives a decomposition $E = E_1 + E_2$ where
\begin{equation*}
    E_1 := - \frac{\hat u_y^{2}{u}_{yy}}{1+u_{y}^{2}}
\end{equation*}
and
\begin{equation*}
    E_2 := \tfrac{ u_{yy}}{1 + u_y^2}\Big( \eta(\eta - 1)u_y^2 + 2u u_y \eta \tfrac{\eta'}{\rho} + u^2 \tfrac{\lvert \eta'\rvert^2}{\rho^2} \Big) + \tfrac{\eta'}{\rho}\Big( (\tfrac12 - \tfrac{\rho'}{\rho}) uy - 2u_y \Big) - u \tfrac{\eta''}{\rho^2}.
\end{equation*}
Here and thereafter, for brevity, we omit the composite with $y/\rho$ in $\eta, \eta', \eta''$. 
We further factor out $E_1 = \hat u_y\, \tilde{E}$ where
\begin{align*}
     \tilde{E} := -\frac{\hat{u}_{y}u_{yy}}{1+u_{y}^{2}}.
\end{align*}
A direct computation gives
\begin{align*}
    |\tilde{E}_{y}| = \left\lvert\frac{(\hat{u}_{yy}u_{yy}+\hat{u}_{y}u_{yyy})(1+u_{y}^{2}) - 2\hat{u}_{y}u_{y}u_{yy}^{2}}{(1+u_{y}^{2})^{2}}\right\rvert,
\end{align*}
which will be needed after integration by parts.

\bigskip
The error $E$, in an integral sense, can be bounded by a quadratic term of $\|\hat{u}\|_{\cH}$ up to an exponentially decaying term.
\begin{lemma}\label{lem: E error term}
    Fix $\tau$. Suppose that $\rho > 10$. We have
    \begin{align}
        \lvert\langle E_{1}, P_{*}\hat{u}\rangle_{\cH} \rvert &\leq \rho\, \|\tilde{E}\|_{C^1(\R)}\,\|\hat u\|_\cH^2, \label{eq: proj.E1}\\
        \lvert\langle E_{2}, P_{*}\hat{u}\rangle_{\cH} \rvert &\leq \tfrac12 \rho^{-\frac12}\|E_2\|_{C^0(\R)}^2\,\|\hat{u}\|_{\cH}^2 + \tfrac12 \rho^{\frac32}\, e^{-\rho^2/16}. \label{eq: proj.E2} 
    \end{align}
    where $* = +, 0, -$.
\end{lemma}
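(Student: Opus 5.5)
For \eqref{eq: proj.E1} I would write $E_1=\hat u_y\tilde E$ and integrate by parts against the Gaussian weight, so that no derivative falls on $P_*\hat u$ beyond what can be recovered from $\|\hat u\|_\cH$. The first step is to note that $P_+\hat u$ and $P_0\hat u$ are explicit polynomials of degree at most one: they are $\langle\hat u,1\rangle_\cH$ and $\tfrac12\langle \hat u,y\rangle_\cH\,y$. Each has coefficient bounded by $C\|\hat u\|_\cH$. Since $\hat u$ and hence $E_1$ are supported in $|y|\le \tfrac34\rho$, I would then compute, with $g=P_*\hat u$,
\[
\langle \hat u_y\tilde E,g\rangle_\cH=-\int \hat u\,\big(\tilde E_y g+\tilde E g_y-\tfrac y2\tilde E g\big)\tfrac{e^{-y^2/4}}{\sqrt{4\pi}}\,dy .
\]
On the support $|y|\le\rho$, the factor $|\tfrac y2|$ is at most $\rho/2$. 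This bounds the right-hand side by $\rho\|\tilde E\|_{C^1}\big(\|\hat u\|_\cH\|g\|_\cH+\|\hat u\|_\cH\|g_y\|_\cH\big)$ up to numerical constants.

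For $*=+,0$ the bound $\|g_y\|_\cH\le C\|\hat u\|_\cH$ is immediate. For $*=-$ I would use $P_-\hat u=\hat u-P_+\hat u-P_0\hat u$. The troublesome term $\int\hat u\,\tilde E\,\hat u_y$ can be rewritten as $\tfrac12\int \tilde E\,(\hat u^2)_y$ and integrated by parts once more. This again gives $\rho\|\tilde E\|_{C^1}\|\hat u\|_\cH^2$, because the weight derivative contributes a factor $|y|/2\le\rho/2$. The constants are tracked so that the total coefficient is at most $\rho$, using $\|g\|_\cH\le\|\hat u\|_\cH$ and $\rho>10$ to absorb numerical factors.

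For \eqref{eq: proj.E2} the key observation is that every term of $E_2$ contains either $\eta(\eta-1)$ or a derivative of $\eta$. Hence $E_2$ is supported in $\tfrac12\rho\le|y|\le\tfrac34\rho$, where the Gaussian weight is at most $e^{-\rho^2/16}$. I would apply Cauchy--Schwarz with a Young splitting:
\[
|\langle E_2,g\rangle_\cH|\le \tfrac12\rho^{-1/2}\|E_2\|_{C^0}^2\|g\|_\cH^2+\tfrac12\rho^{1/2}\int_{\rho/2\le|y|\le 3\rho/4}\tfrac{e^{-y^2/4}}{\sqrt{4\pi}}\,dy .
\]
The last integral is at most $\rho\,e^{-\rho^2/16}$, which gives the term $\tfrac12\rho^{3/2}e^{-\rho^2/16}$. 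To finish, I would use $\|P_*\hat u\|_\cH\le\|\hat u\|_\cH$, which follows from orthogonality of the projections.

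The main obstacle I expect is the $P_-$ case of \eqref{eq: proj.E1}. There the factor $\partial_yP_-\hat u$ is not directly controlled by $\|\hat u\|_\cH$, and handling it requires the subtraction trick and a second integration by parts described above. Getting the exact constant $\rho$ rather than $C\rho$ needs some care with the numerical constants.
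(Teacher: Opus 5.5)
Your proposal is correct and is essentially the paper's proof. It uses the same ingredients: integration by parts against the Gaussian weight, the explicit degree-$\le 1$ form of $P_+\hat u$ and $P_0\hat u$, the identity $ff_y=\tfrac12(f^2)_y$ with a second integration by parts for the $P_-$ case, and Cauchy--Schwarz plus Young on the annulus $\tfrac12\rho\le|y|\le\tfrac34\rho$ for $E_2$.

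The only cosmetic difference is in the $P_-$ case. You write $P_-\hat u=\hat u-P_+\hat u-P_0\hat u$ and symmetrize $f=\hat u$, while the paper splits $\hat u_y=(P_0\hat u)_y+(P_-\hat u)_y$ and symmetrizes $f=P_-\hat u$. Your bookkeeping gives a coefficient of at most $\bigl(\tfrac34\rho+1+\tfrac12+\tfrac{1}{\sqrt2}\bigr)\|\tilde E\|_{C^1}$, which is indeed $\le\rho\|\tilde E\|_{C^1}$ for $\rho>10$.
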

\begin{proof}
    Recalling $P_+ \hat u =  \langle \hat u , 1\rangle_\cH$ and $P_0 \hat u = \tfrac12 \langle \hat u , y \rangle_\cH \, y$, we have $(P_{+}\hat{u})_{y} = 0$ and  $(P_{0}\hat{u})_{y} = \tfrac12\langle\hat{u}, y\rangle_{\cH}$. Thus, for $* = +, 0$,
    \begin{align}\label{eq: nonpositive mode inverse poincare}
        \|(P_{*}\hat{u})_{y}\|^{2}_{\cH}\leq \tfrac12 \|P_{*}\hat{u}\|^{2}_{\cH},
    \end{align}
    and then integration by parts gives
    \begin{align*}
        |\langle E_{1}, P_{*}\hat{u}\rangle_{\cH}| &= \left|\int_{\mathbb{R}}\hat{u}_{y}\, \tilde{E} \:(P_{*}\hat{u})\:e^{-\frac{y^{2}}{4}}dy\right|\\
        &=\left|\int_{\R} \hat{u}\left(\tilde{E}_{y} \,(P_{*}\hat{u}) + \tilde{E}\,(P_{*}\hat{u})_{y} - \frac{y}{2}\tilde{E}\, (P_{*}\hat{u})\right)\,e^{-\frac{y^{2}}{4}}dy\right|.
    \end{align*}
    Using (\ref{eq: nonpositive mode inverse poincare}) and H\"older inequality, we find that for $* = +, 0$,
    \begin{align*}
        |\langle E_{1}, P_{*}\hat{u}\rangle_{\cH}|\leq \rho\,\|\tilde{E}\|_{C^1(\R)}\,\|\hat{u}\|_{\cH}\,\|P_{*}\hat{u}\|_{\cH}.
    \end{align*}
    We remark that the term with $y\tilde{E}$ supported on the interval $(-\rho, \rho)$ has the worst estimate.

    Next, we show the inequality \eqref{eq: proj.E1} holds for $* = -$. Observe that by integration by parts,
    \begin{align*}
        \langle E_{1}, P_{-}\hat{u}\rangle_{\cH} &= \int_{\mathbb{R}}\hat{u}_{y}\tilde{E}\: (P_{-}\hat{u})\:e^{-\frac{y^{2}}{4}}dy\\
        &=\int_{\mathbb{R}}\left[(P_{+}\hat{u})_{y} + (P_{0}\hat{u})_{y} + (P_{-}\hat{u})_{y}\right] (P_{-}\hat{u}) \:\tilde{E}\:e^{-\frac{y^{2}}{4}}dy\\
        &=\tfrac12 \langle\hat{u}, y\rangle_{\cH}\int_{\mathbb{R}} (P_{-}\hat{u})\:\tilde{E}\:e^{-\frac{y^{2}}{4}}dy + \int_{\mathbb{R}} \tfrac{1}{2}[(P_{-}\hat{u})^{2}]_{y}\tilde{E}\:e^{-\frac{y^{2}}{4}}dy\\
        &= \tfrac12 \langle\hat{u}, y\rangle_{\cH}\int_{\mathbb{R}} (P_{-}\hat{u})\:\tilde{E}\:e^{-\frac{y^{2}}{4}}dy  - \tfrac{1}{2}\int_{\mathbb{R}}(P_{-}\hat{u})^{2}\left(\tilde{E}_{y} - \tfrac{y}{2}\tilde{E}\right)\:e^{-\frac{y^{2}}{4}}dy.
    \end{align*}
    Therefore, since $\rho > 10$,
    \begin{align*}
        |\langle E_{1}, P_{-}\hat{u}\rangle_{\cH}|\leq  \|\tilde{E}\|_{C^1(\R)}\,\|P_{0}\hat{u}\|_{\cH}\|P_{-}\hat{u}\|_{\cH} + \tfrac12\rho\|\tilde{E}\|_{C^1(\R)}\|P_{-}\hat{u}\|^{2}_{\cH}
        \leq \rho \|\tilde{E}\|_{C^1(\R)}\, \|\hat{u}\|_{\cH}^2.
    \end{align*}

    Finally, we show \eqref{eq: proj.E2}. 
    Since the support of $E_{2}$ is contained in $\{\tfrac{\rho}{2}\leq |y|\leq \tfrac{3\rho}{4}\}$, we may use the decay of Gaussian kernel to obtain the estimate
    \begin{align*}
        \left|\langle E_{2}, P_{*}\hat{u}\rangle_{\mathcal{H}}\right| &\leq \int_{\frac{\rho}{2}\leq |y|\leq \frac{3\rho}{4}} |E_2||P_{*}\hat{u}|\:e^{-\frac{y^{2}}{4}}\:dy\\
        &\leq \|E_2\|_{C^0(\R)}\Big(\int_{\frac{\rho}{2}\leq |y|\leq \frac{3\rho}{4}}e^{-\frac{y^{2}}{4}}\:dy\Big)^{\frac12} \|\hat{u}\|_{\cH}\\
        &\leq \rho^{\frac12} e^{-\rho^2/32}\, \|E_2\|_{C^0(\R)}\, \|\hat u\|_\cH\\
        &\leq \tfrac12 \rho^{-\frac12}\|E_2\|_{C^0(\R)}^2\,\|\hat{u}\|_{\cH}^2 + \tfrac12 \rho^{\frac32}\, e^{-\rho^2/16}.
    \end{align*}
\end{proof}

\begin{proposition}\label{prop: E error term}
    There exists a universal constant $K = K(A, K_0)>1$ such that for all $\tau \leq 0$,
    \begin{align}\label{eq: coeff decay}
    2\lvert\langle E, P_{*}\hat{u}\rangle_{\cH} \rvert 
    &\leq  \tfrac{K}{\sqrt{\rho}} \big(\|\hat{u}\|_{\cH}^2 + \rho^2 e^{-\rho^2/16}\big) .
\end{align}
\end{proposition}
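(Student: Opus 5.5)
The plan is to combine the two estimates of Lemma~\ref{lem: E error term} and then reduce everything to pointwise bounds on $\tilde E$, $\tilde E_y$ and $E_2$ in terms of $\rho$, using \eqref{hyp: bounds}, \eqref{hyp: decay.cond}, the gradient decay \eqref{eq: du.decay}, and Proposition~\ref{prop: d3.u decay}. Since $E = E_1 + E_2$, we have
\[
2\lvert\langle E, P_*\hat u\rangle_\cH\rvert \le 2\rho\,\|\tilde E\|_{C^1}\|\hat u\|_\cH^2 + \rho^{-\frac12}\|E_2\|_{C^0}^2\|\hat u\|_\cH^2 + \rho^{\frac32}e^{-\rho^2/16}.
\]
It therefore suffices to show two things: first, $\rho\|\tilde E\|_{C^1}\le C(A,K_0)\rho^{-1/2}$; second, $\|E_2\|_{C^0}\le C(A,K_0)$. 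The last term is already $\le \rho^{-1/2}\rho^2 e^{-\rho^2/16}$.

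\emph{Step 1: bound on $\tilde E$.} Here $\tilde E = -\hat u_y u_{yy}/(1+u_y^2)$ is supported in $|y|\le \tfrac34\rho$. Write $\hat u_y = u_y\eta + u\eta'/\rho$. By \eqref{eq: du.decay} and \eqref{hyp: bounds} (note that $e^{\vare_0\tau}\le 1$ for $\tau\le 0$), this gives $|\hat u_y|\le C\rho^{-1/2}$. Together with $|u_{yy}|\le K_0/(2\rho)$ this yields $|\tilde E|\le C\rho^{-3/2}$.

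\emph{Step 2: bound on $\tilde E_y$.} Use the formula for $\tilde E_y$ displayed before the lemma. We need
\[
|\hat u_{yy}| \le |u_{yy}| + 2|u_y||\eta'|/\rho + |u||\eta''|/\rho^2 \le C\rho^{-1},
\]
and we have $|u_{yyy}|\le K_1\rho^{-2}$ on $(-\rho,\rho)$, which covers the support. Each term in $\tilde E_y$ is then at most $C\rho^{-2}$ or better, for instance $\hat u_{yy}u_{yy}\sim\rho^{-2}$ and $\hat u_y u_{yyy}\sim \rho^{-5/2}$. Hence $\|\tilde E\|_{C^1}\le C\rho^{-3/2}$, and so $\rho\|\tilde E\|_{C^1}\le C\rho^{-1/2}$, which is the first claim.

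\emph{Step 3: bound on $E_2$.} All terms in $E_2$ are bounded by constants depending on $A,K_0$. The only delicate one is $\frac{\eta'}{\rho}(\tfrac12-\tfrac{\rho'}{\rho})uy$. On the support we have $|y|\le \tfrac34\rho$, so $|\eta' y/\rho|\le 6$. By \eqref{hyp: bdd.log.d.rho.1}, $|\tfrac12-\rho'/\rho|\le 1$, so this term is $\le 6A$. The remaining terms are smaller: $u_{yy}u_y^2$, $u u_y u_{yy}/\rho$, $u^2u_{yy}/\rho^2$, $u_y/\rho$ and $u/\rho^2$ are all $O(1)$, and in fact decay. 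Thus $\|E_2\|_{C^0}\le C(A,K_0)$. Choosing $K=\max(2C, C^2, 1)+1$ and combining the three steps gives \eqref{eq: coeff decay}.

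The main obstacle is Step 3. That is where the drift term $-\tfrac y2\partial_y$ interacts with the cutoff and produces an $O(1)$, non-decaying error. This is harmless only because $E_2$ is supported where the Gaussian weight gives $e^{-\rho^2/16}$, and Lemma~\ref{lem: E error term} has already absorbed that. Beyond this, the work is just bookkeeping of powers of $\rho$. We also use that $\rho>10$ is large enough for \eqref{eq: du.decay} to hold, which it does for all $\tau\le 0$.
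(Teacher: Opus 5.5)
Your proposal is correct and is essentially the paper's own proof. Like the paper, you feed the bound $\|\tilde E\|_{C^1}\le C\rho^{-3/2}$ (from \eqref{hyp: decay.cond}, \eqref{eq: du.decay}, \eqref{eq: d3u.decay}) and the bound $\|E_2\|_{C^0}\le C$ (where the cutoff--drift term $\frac{\eta'}{\rho}(\tfrac12-\tfrac{\rho'}{\rho})uy$ is the worst, controlled via \eqref{hyp: bdd.log.d.rho.1}) into Lemma~\ref{lem: E error term}.

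The only slip is cosmetic: the coefficients add, so you need $K\ge 2C+C^2$ (e.g.\ $K=2C+C^2+1$), not $\max(2C,C^2,1)+1$.
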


\begin{proof}
Note that the coefficients of $\lVert \hat u\rVert^2_\cH$ in Lemma~\ref{lem: E error term} decay to zero as $\tau$ goes to $-\infty$.
Owing to the $C^2$ decay conditions \eqref{hyp: decay.cond}, \eqref{eq: du.decay}, and \eqref{eq: d3u.decay} of $u_y$ and hypothesis \eqref{hyp: bdd.log.d.rho.1} on $\rho$, there exists a universal constant $K = K(A, K_0)>1$ such that for all $\tau \leq 0$,
\begin{align}
    \|\tilde E\|_{C^1(\R)} \leq \tfrac{K}{4} \rho^{-\frac32},
\end{align}
and 
\begin{align}
    \|E_2\|_{C^0(\R)}^2 \leq \tfrac{K}{2},
\end{align}
where the term with $y$ in $E_2$ has the worst estimate.
Therefore, Lemma~\ref{lem: E error term} implies the claim.

\end{proof}

\ 

\begin{proof}[Proof of Theorem~\ref{thm: general asymp line}]
It follows from \eqref{eq: r.gr.CSF} that each mode of $u$, respectively, satisfies
\begin{align}
    \tfrac{d}{d\tau}\|P_{+}\hat{u}\|^{2}_{\mathcal{H}} &= \|P_{+}\hat{u}\|^{2}_{\mathcal{H}} + 2\langle E, P_{+}\hat{u}\rangle_{\mathcal{H}} ,\label{eq: u.+mode}\\
    \tfrac{d}{d\tau}\|P_{0}\hat{u}\|^{2}_{\mathcal{H}} &= 2\langle E, P_{0}\hat{u}\rangle_{\mathcal{H}} ,\label{eq: u.0mode}\\
    \tfrac{d}{d\tau}\|P_{-}\hat{u}\|^{2}_{\mathcal{H}} &\leq -\|P_{-}\hat{u}\|^{2}_{\mathcal{H}} + 2\langle E, P_{-}\hat{u}\rangle_{\mathcal{H}}.\label{eq: u.-mode}
\end{align}
Set 
\begin{align}
    W_+(\tau) := \|P_+ \hat u\|^2_\cH + \rho^2 e^{-\rho^2/16} ,\quad  W_0(\tau) := \|P_0 \hat u\|^2_\cH, \quad  W_-(\tau) := \|P_- \hat u\|^2_\cH.
\end{align}
Remark that we add $\rho^2 e^{-\rho^2/16}$ in $W_+$ to absorb the error terms generated by cutting off $u$ in the system of differential inequalities.

By using the hypotheses \eqref{hyp: ini.cond} and \eqref{hyp: bdd.log.d.rho.2} on $\rho$, for all $\tau \leq \tau_1$,
\begin{align}\label{hyp: rho.exp.growth}
    \rho(\tau) \geq B e^{-\mu \tau}.
\end{align}
Then, Proposition~\ref{prop: E error term} gives the integral estimate of error terms:
\begin{align}\label{eq: proj.E}
    2\lvert\langle E, P_{*}\hat{u}\rangle_{\cH} \rvert 
    &\leq  \tfrac{K}{\sqrt{B}}e^{\mu \tau/2}\big(\|\hat{u}\|_{\cH}^2 + \rho^2 e^{-\rho^2/16}\big).
\end{align}
Hence, a computation using \eqref{eq: u.+mode} and hypotheses \eqref{hyp: bdd.log.d.rho.1}, \eqref{hyp: bdd.log.d.rho.2} on $\rho$ give
\begin{align}
    \tfrac{d}{d\tau}W_+ - W_+ &= 2\langle E, P_{+}\hat{u}\rangle_{\mathcal{H}}  
    + [-\tfrac{\rho'}{\rho}(\tfrac{\rho^2}{8}-2) - 1]\rho^2 e^{-\rho^2/16}\label{eq: d.W+}\\
    &\geq -\tfrac{K}{\sqrt{B}}e^{\mu \tau/2}\big(\|\hat{u}\|_{\cH}^2 + \rho^2 e^{-\rho^2/16}\big) 
    + [\tfrac{\mu B^2}{8} - 2]\rho^2 e^{-\rho^2/16}\notag\\
    &\geq - \tfrac{K}{\sqrt{B}}e^{\mu \tau/2}\big(\|\hat{u}\|_{\cH}^2 + \rho^2 e^{-\rho^2/16}\big)\notag
\end{align}
provided that we take $B>\sqrt{16/\mu}$.
Then, by applying \eqref{eq: proj.E} to \eqref{eq: u.0mode} and \eqref{eq: u.-mode}, we have the following system of differential inequalities: for all $\tau \leq \tau_1$
\begin{align}
    \tfrac{d}{d\tau}W_+ - W_+ &\geq  -\tfrac{K}{\sqrt{B}}e^{\mu \tau/2}(W_+ + W_0 + W_-),\label{eq: MZ+}\\
    \left|\tfrac{d}{d\tau} W_0 \right| &\leq \tfrac{K}{\sqrt{B}}e^{\mu \tau/2}(W_+ + W_0 + W_-),\label{eq: MZ0}\\
    \tfrac{d}{d\tau} W_- + W_- &\leq \tfrac{K}{\sqrt{B}}e^{\mu \tau/2}(W_+ + W_0 + W_-).\label{eq: MZ-}
\end{align}

Since $W_+ + W_0 + W_- >0$ and $W_+, W_0, W_-\to 0$ as $\tau\to-\infty$ due to hypothesis \eqref{hyp: bounds}, by the ODE lemma (\cite[Lemma A.1]{MZ98OEB}), there exists a numerical constant $C$ such that if we choose $B \geq C^2 K^2$, then
\begin{align}\label{eq: MZ1}
    W_++W_- \leq e^{\mu \tau/2} W_0 
\end{align}
or
\begin{align}\label{eq: MZ2}
    W_0+W_-\leq e^{\mu \tau/2} W_+.
\end{align}

We claim that (\ref{eq: MZ1}) cannot occur.
Suppose the contrary. We deduce from (\ref{eq: MZ0}) that
\begin{align*}
    \left|\tfrac{d}{d\tau}W_0 \right| \leq e^{\mu \tau/2}W_0.
\end{align*}
Hence, $|(\log W_0)'|\leq e^{\mu \tau/2}$ and therefore  $|\log W_0|\leq C_1$ with some constant $C_1>0$ for all $\tau \leq \tau_1$. Then, $W_0$ can not converge to zero, a contradiction.

Now, we only need to consider (\ref{eq: MZ2}). 
We assume that $B$ in \eqref{hyp: rho.exp.growth} is sufficiently large such that for all $\tau\leq \tau_1$
\begin{align}
    \rho^2 e^{-\rho^2/16} \leq e^{-\rho^2/25}.
\end{align}
Combining this with \eqref{eq: u.+mode} and \eqref{eq: proj.E} gives that $\tau \leq \tau_1$
\begin{align*}
    \tfrac{d}{d\tau} \|P_+ \hat u\|_\cH^2 \leq \|P_+ \hat u\|_\cH^2 + e^{\mu \tau/2}\|P_+ \hat u\|_\cH^2 + e^{-\rho^2/25}.
\end{align*}
Applying the method of integrating factor gives 
\begin{align*}
    \tfrac{d}{d\tau} \big[\exp\big(-\tau - \tfrac{2}{\mu}e^{\mu \tau/2}\big) \|P_+ \hat u\|_\cH^2 \big] \leq \exp\big(-\tau - \tfrac{2}{\mu}e^{\mu \tau/2} - \tfrac{\rho^2}{25}\big) \leq \exp\big(-\tfrac{2}{\mu}e^{\mu\tau_1/2}\big)e^{-\rho^2/36}.
\end{align*}
Here, by taking $B$ large in \eqref{hyp: rho.exp.growth}, we can assume that $\rho^2/25-\rho^2/36 \geq -2\tau$ for all $\tau \leq \tau_1$.
It follows from the comparison theorem for improper integrals that the limit
\begin{align*}
    \lim_{\tau \to -\infty} \exp\big(-\tau - \tfrac{2}{\mu}e^{\mu \tau/2}\big) \|P_+ \hat u\|_\cH^2 = \lim_{\tau\to -\infty} e^{-\tau}\|P_+ \hat u\|_\cH^2
\end{align*}
exists. 
Since $P_+\hat u = \langle \hat u, 1\rangle_\cH$ is continuous in $\tau$, there exists $a\in \R$ such that $e^{-\tau/2} P_+\hat u$ converges to $a$ as $\tau$ tends to $-\infty$.
Therefore, \eqref{eq: MZ2} implies that $\|e^{-\tau/2} \hat{u} - a\|_\cH^2 \to 0$ as $\tau$ tends to $-\infty$.

Next, we want to estimate the rate of convergence. 
It is straightforward to verify that $w := u - ae^{\tau/2}$ is also a solution to rescaled graphical CSF \eqref{eq: r.gr.CSF}. 
Furthermore, $e^{-\tau/2}P_+ \hat{w}$ converges to 0 as $\tau \to -\infty$ since  
\begin{equation*}
\|\hat{a} - a\|_{\mathcal{H}}^2 = \frac{1}{\sqrt{4\pi}} \int_{\lvert y \rvert \geq \rho/2} a^2 \Big( 1 - \eta \big( \tfrac{y}{\rho} \big) \Big)^2 e^{-\frac{y^2}{4}} \, dy \leq \frac{4a^2}{\rho \sqrt{\pi}} e^{-\rho^2/16},
\end{equation*}
where $\hat a = a\eta(y/\rho)$ is defined as before.
Applying the same spectral dynamic analysis to $\hat{w}$ in previous paragraphs, we end up with
\begin{align*}
    \tfrac{d}{d\tau} \big[\exp\big(-\tau - \tfrac{2}{\mu}e^{\mu \tau/2}\big) \|P_+ \hat w\|_\cH^2 \big] \leq \exp\big(-\tfrac{2}{\mu}e^{\mu\tau_1/2}\big) e^{-\rho^2/36}.
\end{align*}
Integrating over the interval $[\tau', \tau]$ yields
\begin{align*}
    &\exp\left(-\tau - \tfrac{2}{\mu}e^{\mu \tau/2}\right)\|P_{+}\hat{w}(\cdot, \tau)\|^{2}_{\mathcal{H}} - \exp\left(-\tau' 
    - \tfrac{2}{\mu}e^{\mu \tau'/2}\right)\|P_{+}\hat{w}(\cdot, \tau')\|^{2}_{\mathcal{H}}\\
    \leq &\exp\big(-\tfrac{2}{\mu}e^{\mu\tau_1/2}\big)\int_{\tau'}^{\tau} e^{- \rho(s)^2/36}\:ds.
\end{align*}
Using rebalancing condition that $\lim_{\tau'\to-\infty}e^{-\tau'}\|P_{+}\hat{w}(\cdot, \tau')\|^{2}_{\mathcal{H}} = 0$, for all $\tau \leq \tau_1$
\begin{align}\label{eq: e.w+}
    e^{-\tau}\|P_{+}\hat{w}(\cdot, \tau)\|^{2}_{\mathcal{H}}\leq \int_{-\infty}^{\tau}e^{-\rho^{2}(s)/36}\:ds.
\end{align}

We need to analyze $\int_{-\infty}^{\tau}e^{-\rho^{2}(s)/36}\:ds$. 
Set the substitution $\xi = \rho(s)^2$ with $d\xi = 2\rho\rho'ds$. 
Then the hypothesis \eqref{hyp: bdd.log.d.rho.2} implies that $2\rho\rho' \leq -2\mu\xi$
and 
\begin{align*}
    \int_{-\infty}^{\tau}e^{-\rho^{2}(s)/36}\:ds 
    \leq (2\mu)^{-1}\int_{\rho(\tau)^2}^\infty \xi^{-1} e^{-\xi/36}\:d\xi.
\end{align*}
By taking $B$ large such that $(2\mu B^2)^{-1}  \leq \tfrac{1}{144}$, we obtain
\begin{align*}
    \int_{-\infty}^{\tau}e^{-\rho^{2}(s)/36}\:ds 
    \leq \int_{\rho(\tau)^2}^\infty \tfrac{1}{144}e^{-\xi/36}\:d\xi = \tfrac{1}{4}e^{-\rho(\tau)^2/36}.
\end{align*}
Putting it back to \eqref{eq: e.w+} yields that for all $\tau\leq \tau_1$
\begin{align*}
    e^{-\tau}\|P_{+}\hat{w}(\cdot, \tau)\|^{2}_{\mathcal{H}}\leq \tfrac{1}{4}e^{-\rho^2/36}.
\end{align*}
From the dominance of unstable mode (\ref{eq: MZ2}), for $\tau\leq \tau_1$
\begin{align*}
    \|e^{-\tau/2} \hat w(\cdot, \tau)\|^2_\cH \leq \tfrac{1}{3}e^{-\rho(\tau)^2/36}.
\end{align*}
Therefore, by taking $B\gg 1$ (independent of $a$),
\begin{align*}
    \|e^{-\tau/2}\hat u - a\|_\cH^2 \leq 2\|e^{-\tau/2}\hat w\|_\cH^2 + 2\|\hat a - a\|_\cH^2 \leq (1+ a^2) e^{-\rho^2/36}.
\end{align*}

Lastly, we want to show $L^{\infty}$-convergence.
Let $R> 0$. 
For $\tau \leq \tau_1$ for which $\rho(\tau) > 4R$,
\begin{align*}
    \| e^{-\tau/2} u - a \|^2_{L^2([-2R, 2R])}\leq e^{R^2} (1+ \lvert a\rvert) e^{-\rho^2/49}.
\end{align*}
Using the assumption \eqref{hyp: bounds} on small gradient that also holds in the region $[-2R, 2R]$ and the standard interpolation inequality \cite[Lemma B.1]{CM15}, there exist $\tau_R\ll -1$ and a constant $\beta\in (0, \tfrac{1}{98})$ such that $\| e^{-\tau/2} u(\cdot,\tau) - a \|_{L^\infty([-R, R])} \leq e^{-\beta\rho(\tau)^2}$ for all $\tau \leq \tau_R$. 
\end{proof}

\bigskip

In the end of this section, we prove Theorem~\ref{thm: sharp asympt line} by using Theorem~\ref{thm: general asymp line} and verifying its hypotheses. 

\begin{proof}[Proof of Theorem~\ref{thm: sharp asympt line}]
We may choose the graphical radius $\rho(\tau) = e^{-\delta \tau}/2$ with $\delta \in (0,\tfrac12)$ in \cite[Theorem 1.2]{CSSZ24}, which clearly satisfies \eqref{hyp: bdd.log.d.rho.1}, \eqref{hyp: bdd.log.d.rho.2} with $\mu  = \delta$, and \eqref{hyp: ini.cond} provided that $\tau_1\ll -1$.
Combining \cite[Theorem 1.2]{CSSZ24} and \cite[Theorem 7.3]{CSSZ24} leads to hypotheses \eqref{hyp: bounds} and \eqref{hyp: decay.cond} on $(-2\rho, 2\rho)$ (by choosing a slightly smaller $\delta$).
Therefore, the result follows from Theorem~\ref{thm: general asymp line}.
\end{proof}

\section{Coarse asymptotics in space}
To obtain exponential convergence of the edges to their asymptotes as a grim reaper or a trombone, it is critical to extend the graphical radius up to an optimal rate. 
In particular, we desire a graphical radius $\varrho(t) = \sqrt{-t}\rho(t) = O(\lvert t \rvert)$ with the optimal rate $\mu = \tfrac12$ in \eqref{hyp: bdd.log.d.rho.2}, within which the profile function $U(x,t)$ of $M_t$ and its derivative $U_x$ have uniform $L^\infty$-estimates, implying \eqref{hyp: bounds}, and furthermore, $U_{xx}$ or $\kappa$ satisfies a certain estimate implying \eqref{hyp: decay.cond}.
This section is devoted to obtaining the above essential estimates to obtain Theorem~\ref{thm: exp.conv.away.tip}.

\subsection{Solution in strip}

\begin{lemma}[asymptotic slope]\label{lem: asymptotic slope} For every $\delta\in(0,1)$, there exists $\Lambda=\Lambda(\delta)<\infty$ and $t_0\in \mathbb{R}$ such that 
\begin{align} \label{asymptotic slope}
    \sup\left\{\left|\frac{x^2}{x^1}\right|: (x^1, x^2)\in M_t\right\}<\delta
\end{align}
   provided that $t\le t_0$ and $|x^1|\ge \Lambda\sqrt{-t}$. 
\end{lemma}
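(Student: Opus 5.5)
We argue by contradiction, combining Huisken's monotonicity formula with the convergence of the rescaled flow $\bar M_\tau$ to the multiplicity-$(m+1)$ line $\{x^2=0\}$.

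\emph{Setup.} Fix $\delta\in(0,1)$ and a small constant $c=c(\delta)\in(0,1)$, to be chosen. Suppose there are points $X=(x,t)\in\mathcal{M}$ with
\[
t\to-\infty,\qquad |x^1|\ge \Lambda\sqrt{-t},\qquad |x^2|\ge \delta|x^1| .
\]
We will show this forces a Gaussian density contradiction.

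\emph{Step 1: monotonicity pushes mass back in time.} Since $\mathcal{M}$ is smooth and embedded, the Gaussian density of $\mathcal{M}$ at $X$ equals $1$. By Huisken's monotonicity formula, which is valid for non-compact flows because the entropy is finite, the Gaussian integral of $M_s$ centered at $x$ with scale $t-s$ is at least $1$ for every $s<t$. Choose $r=c\delta|x|$ and $s=t-r^2$. Then
\[
\int_{M_s}\tfrac{1}{\sqrt{4\pi r^2}}e^{-|z-x|^2/4r^2}\,d\mathcal{H}^1(z)\ \ge 1 .
\]

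\emph{Step 2: the rescaled picture is bounded.} Rescale to $\bar M_\sigma$ with $\sigma=-\log(-s)$, and set $y=x/\sqrt{-s}$ and $\bar r=r/\sqrt{-s}=c\delta|y|$. Using $-t\le |x|^2/\Lambda^2$, we get
\[
|x|^2 c^2\delta^2\ \le\ -s\ \le\ |x|^2\big(\Lambda^{-2}+c^2\delta^2\big).
\]
Hence
\[
(\Lambda^{-2}+c^2\delta^2)^{-1/2}\le |y|\le (c\delta)^{-1}.
\]
So $y$ ranges over a fixed compact annulus $\mathcal{A}$ that avoids the origin, and $\bar r$ is comparable to $|y|$. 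Moreover:
\begin{itemize}
\item since $|x^2|\ge\delta|x^1|$, we have $\operatorname{dist}(y,\{y^2=0\})=|y^2|\ge \tfrac{\delta}{2}|y|$;
\item since $-s\ge -t\to\infty$, we have $\sigma\to-\infty$.
\end{itemize}
Because the Gaussian integral is scale invariant, Step 1 gives the same lower bound for $\bar M_\sigma$ with center $y$ and scale $\bar r$.

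\emph{Step 3: contradiction with the limit.} We split the Gaussian integral over $\bar M_\sigma$ into two parts.
\begin{itemize}
\item \emph{Far part.} Outside a large ball $B_R$, the integral is bounded by a multiple of $\mathrm{Ent}[\mathcal{M}]\,e^{-R^2/C}$. To see this, dominate the kernel outside $B_R$ by a Gaussian at a larger scale and use the entropy bound \eqref{def:entropy}. Here $R$ is uniform because $y\in\mathcal{A}$ and $\bar r\le 1$.
\item \emph{Near part.} Inside $B_R$, $\bar M_\sigma$ converges as $\sigma\to-\infty$, locally smoothly or at least as Radon measures, to the line $\{y^2=0\}$ with multiplicity $m+1$. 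So this part converges to the corresponding integral over the limit line. Since the center lies at distance at least $\tfrac{\delta}{2}|y|=\tfrac{1}{2c}\,\bar r$ from that line, this limiting integral is at most $(m+1)e^{-1/(16c^2)}$.
\end{itemize}
Choose $c$ small so that $(m+1)e^{-1/(16c^2)}<\tfrac14$, and then $R$ large so that the far part is below $\tfrac14$. For $t$ sufficiently negative the total is then less than $1$, contradicting Step 1. This yields $t_0$. We may take $\Lambda=\Lambda(\delta)$ to be any fixed large number, for instance $\Lambda=1/(c\delta)$; it only serves to keep $\mathcal{A}$ away from the origin, and $\delta<1$ keeps the cone condition meaningful.

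\emph{Main obstacle.} The delicate step is making Step 3 uniform over all points. The centers $y$ and scales $\bar r$ vary, and the convergence of $\bar M_\sigma$ is only local. I handle this in two ways. First, Step 2 confines all data to the compact set $\mathcal{A}\times[c\delta\,\mathrm{inf}_{\mathcal{A}}|y|,\,1]$. Second, I argue via a subsequence: if the claim fails, there are sequences of points violating it, and we pass to a convergent subsequence of $(y,\bar r)$. The weak convergence of the Gaussian-weighted measures, together with the entropy tail bound, then gives the contradiction.
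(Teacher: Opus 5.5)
Your proof is correct, and it takes a genuinely different route from the paper.

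\textbf{The paper's route.} The paper works from the sharp asymptotics of Corollary~\ref{cor: sharp asympt line}. For $t\le t_r$, the part of $M_t$ in $\{|x^1|\le 2r\sqrt{-t}\}$ lies in the strip $|x^2|\le A$. This control is then carried forward to later times $t\in[t_r,t_0]$ at the abscissa $x^1=r\sqrt{-t_r}$. To do so, the paper starts two shrinking circles tangent to $x^2=\pm A$ and applies the avoidance principle. The result is the explicit height bound $|x^2|\le A+2\sqrt{-t_r}/r$.

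\textbf{Your route.} You use only three ingredients:
\begin{enumerate}
\item the qualitative fact, recalled in Section~2 from \cite{CSSZ24}, that $\bar M_\tau$ converges locally smoothly to the single fixed line $\{y^2=0\}$ with multiplicity $m+1$;
\item Huisken's monotonicity, giving the density lower bound $1$;
\item the entropy bound, which controls the tail.
\end{enumerate}
The choice $r=c\delta|x|$ is what confines the rescaled centers to a compact annulus with $\bar r\le 1$ and distance-to-scale ratio at least $1/(2c)$. Your far-field estimate is also correct as stated: dominating the kernel by one at scale $\sqrt2\,\bar r$ gives a bound $\lesssim \mathrm{Ent}\,e^{-R^2/32}$ using only $\bar r\le1$. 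The positive lower bound on $\bar r$ is needed only for the subsequence and uniform-convergence step.

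\textbf{Comparison.} Your approach has three advantages:
\begin{itemize}
\item It is independent of the spectral analysis of Section~3 and of the constants $a_i$.
\item It works for every $\Lambda>0$, not just a specific large one. This is consistent with the later strip bound.
\item It avoids the bookkeeping in the barrier argument, for example ensuring that the abscissae $r\sqrt{-t_r}$ sweep out all of $[\Lambda\sqrt{-t_0},\infty)$.
\end{itemize}
The cost is that it is a compactness-by-contradiction argument, so it yields no explicit height bound or rate, whereas the barrier argument does. This does not matter here, because Proposition~\ref{prop: tail in strip} uses the lemma only as a qualitative cone bound.

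\textbf{Minor point.} Your constant $c$ actually depends on $m$, through $(m+1)e^{-1/(16c^2)}<\tfrac14$, rather than on $\delta$. This is harmless.
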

\begin{proof}
    We recall $a_1, \dots ,a_m$ in Corollary \ref{cor: sharp asympt line}. Take a constant $A$ such that 
    \begin{align}
        A=1+\max\{|a_1|,\dots, |a_m|\}.
    \end{align}
    In addition, take constants $t_0< 0$ and $\Lambda_0=\Lambda_0(\delta)<\infty$ such that
    \begin{align}
        \frac{A}{\Lambda_0\sqrt{-t_0}} +\frac{2}{\Lambda_0^2}<\delta.
    \end{align}
    Then, by Corollary \ref{cor: sharp asympt line}, for any $r\ge \Lambda_0$, there is $t_r\le t_0$ such that $(x^1, x^2)\in M_t$ with $|x^1|\leq  2r\sqrt{-t}$  satisfies
    \begin{align} \label{x^2 asymptotic bound}
        -A\le x^2 \le A.
    \end{align} 
    for all $t\leq t_{r}$. \\

    Let $S(r):=r\sqrt{-t_r}$. We claim that (\ref{asymptotic slope}) satisfies for all $t\le t_0$ and $|x^1|\ge S(\Lambda_0)$. Fix $x^1 = S(r)$. By (\ref{x^2 asymptotic bound}), (\ref{asymptotic slope}) holds for $t\le T_r$. Consider the shrinking circles
    \begin{align}
        N_t^1:=\left\{(x^1, x^2): (x^1-S(r))^2+(x^2-S(r)-A)^2=S(r)^2-2(t-t_r)\right\}
    \end{align}
    and 
    \begin{align}
         N_t^2:=\left\{(x^1, x^2): (x^1-S(r))^2+(x^2+S(r)+A)^2=S(r)^2-2(t-t_r)\right\}
    \end{align}
    for $t\in [t_r,0]$. Then, these two circles cannot intersect with $M_t$. For $(S(r), x^2)\in M_t$, we have
    \begin{align}
        |x^2| &\le A+ S(r)-\sqrt{S(r)^2-2(t-T_r)}\\
        &= A+\frac{2(t-T_r)}{S(r)+\sqrt{S(r)^2-2(t-T_r)}}\le A+\frac{2}{r}\sqrt{-T_r}.
    \end{align}
    Thus, for $t\in[t_r, t_0]$, we have 
    \begin{align}
        |x^2|\le \left(\frac{A}{\Lambda_0\sqrt{-t_0}}+\frac{2}{\Lambda_0^2}\right)|x^1| < \delta |x^1|.
    \end{align}
    Thus, our claim holds for $x^1\ge S(\Lambda_0)$.
    By a similar argument, our claim works for $x^1\le -S(\Lambda_0)$. By taking $\Lambda=S(\Lambda_0)/\sqrt{-t_0}$, we obtained the desired conclusion.
\end{proof}

\begin{proposition}[tail in a strip]\label{prop: tail in strip}
    Let $\Phi_t$ be a right-going tail of $M_t$ and let $p^1\geq 0$ be a fixed value. Denote by $p(t):=(p^1,p^2(t))$ the intersection of $\Phi_t$ and $x^1 = p^1$. Then,
    \begin{equation*}
        \Phi_t \cap \big\{(x^1,x^2)\::\: x^1 \geq p^1 \big\} \subset \big\{(x^1,x^2)\::\:  \inf_{s\leq t}\, p^2(s) \leq x^2 \leq \sup_{s\leq t}\, p^2(s) \big\}.
    \end{equation*}
    for all $t\le t_0$, where $t_0$ is a constant in Lemma \ref{lem: asymptotic slope}.
\end{proposition}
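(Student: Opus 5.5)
We prove the statement with a maximum principle applied to the restricted curve $\Phi_t\cap\{x^1\ge p^1\}$, viewed as a curve with one moving boundary point $p(t)$. We treat the upper bound $x^2\le \sup_{s\le t}p^2(s)$; the lower bound follows by the symmetric argument, or by reflecting across the $x^1$-axis.

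First, we record the structural facts we need. By Lemma~\ref{lem: asymptotic slope}, for $t\le t_0$ the portion of $M_t$ with $|x^1|\ge\Lambda\sqrt{-t}$ lies in the cone $|x^2|<\delta|x^1|$. For a right-going tail, the graphical-sheet picture (Corollary~\ref{cor: sharp asympt line}) together with finite entropy shows that $\Phi_t$ crosses each vertical line $x^1=p^1$ exactly once for $t\le t_0$. Hence $p(t)$ is well defined and continuous, and $\Phi_t\cap\{x^1\ge p^1\}$ is a ray starting at $p(t)$ and escaping to $x^1=+\infty$.

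Next, we fix $t_1\le t_0$, set $h:=\sup_{s\le t_1}p^2(s)$, and argue by contradiction. Suppose some point of $\Phi_{t_1}\cap\{x^1\ge p^1\}$ has $x^2>h$. Consider the height function $x^2$ restricted to $\Phi_t\cap\{x^1\ge p^1\}$ for $t\le t_1$. On the boundary point $p(t)$ it is at most $h$. To handle spatial infinity, we need the supremum of $x^2$ on this ray to be approached either at a finite point or only in a controlled way. We aim to show instead that for $t\to-\infty$ the whole ray lies below $h+\varepsilon$ for any $\varepsilon>0$, and then propagate forward in time.

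The main obstacle is this noncompactness, both at $x^1=\infty$ and at $t=-\infty$. We plan to handle it with barriers rather than a bare maximum principle, in the spirit of the proof of Lemma~\ref{lem: asymptotic slope}. Since $x^2$ is a solution of the heat equation along the flow ($\partial_t x^2=\Delta_{M_t}x^2$), the set of points above level $h+\varepsilon$ cannot appear from the interior. It could only enter through the boundary point $p(t)$, which is excluded, or come in from infinity. To rule out entry from infinity, we compare with large shrinking circles placed above the line $x^2=h+\varepsilon$, far to the right. Lemma~\ref{lem: asymptotic slope} gives slope $<\delta$ at large $|x^1|$, and Corollary~\ref{cor: sharp asympt line} gives $|x^2|\le A$ on scale $R\sqrt{-t}$. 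Together these show that at very negative times the tail lies within $O(1)$ of a horizontal line out to distance $\gg\sqrt{-t}$. More precisely, the sheet of the tail converges to some height $a_i$, and $a_i=\lim_{s\to-\infty}p^2(s)\le h$ because $p^1$ is fixed and $U^i(p^1,s)\to a_i$.

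Using this, we place the initial circles of radius $S\gg\sqrt{-t_1}$ tangent from above to the level $h+\varepsilon$ at early times. Their radius stays large up to time $t_1$, and the avoidance principle, applied on the region $x^1\ge p^1$ with the boundary point kept below $h$, prevents the tail from rising above $h+\varepsilon$ at time $t_1$. Letting $\varepsilon\to0$ gives the upper bound. The delicate point to check is that the asymptotic height of the tail as $x^1\to\infty$ at each fixed time also stays at most $h$. We expect to get this from the fact that the tail's sheet is a graph with $|U_x|$ small, so that it is asymptotic to a line of the level $a_i$.
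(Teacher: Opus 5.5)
There is a genuine gap, and it sits exactly where you locate the main obstacle: controlling the tail near $x^1=+\infty$. At an initial time $s_0\ll t_1$, Corollary~\ref{cor: sharp asympt line} controls the tail to level precision only on $x^1\le R\sqrt{-s_0}$. Beyond that, Lemma~\ref{lem: asymptotic slope} gives only the cone $|x^2|<\delta|x^1|$, which allows heights far above $h+\varepsilon$. So a barrier pinned to the horizontal level $h+\varepsilon$ cannot be initialized there, and your shrinking circles do not escape this. The lowest point of a circle of initial radius $S$ rises by about $(t_1-s_0)/S$ by time $t_1$, so you need $S\gtrsim(t_1-s_0)/\varepsilon$, not merely $S\gg\sqrt{-t_1}$. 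Such a circle is so flat that its lower arc lies below the cone $x^2<\delta x^1$ at horizontal distances up to order $\delta|s_0|/\varepsilon\gg R\sqrt{-s_0}$ from its lowest point. Hence you cannot place it disjoint from the tail at time $s_0$, and nothing prevents the superlevel set $\{x^2>h+\varepsilon\}$ from entering from infinity.

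The point you flag as delicate, that at each fixed time the height of the tail as $x^1\to\infty$ is at most $h$, is not a side issue: it is essentially the content of the proposition. The paper later derives Corollary~\ref{cor: tail in strip}, and through it the decay of $\theta$ along tails in Proposition~\ref{prop: anal.ang.kap.}, from this statement. Your suggested justification also fails. The small-gradient graph description is only available inside the graphical radius, and even a globally small slope permits a height drift of order $\delta x^1$.

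The missing idea is to use barriers that open linearly, so that they are compatible with the cone. The paper takes the \emph{static} lines $x^2=\sup_{s\le t}p^2(s)+\varepsilon+\delta(x^1-p^1)$ and $x^2=\inf_{s\le t}p^2(s)-\varepsilon-\delta(x^1-p^1)$. For large $x^1$ these enclose the cone of Lemma~\ref{lem: asymptotic slope} (applied with a slightly smaller slope if necessary), so the part of $\Phi_s$ in $x^1\ge\Lambda\sqrt{-s}$ lies in the wedge automatically. The maximum principle is then applied only on the compact window $p^1\le x^1\le\Lambda\sqrt{-s_0}$ over $[s_0,t]$. The left edge is controlled by the definition of $\sup/\inf$ of $p^2$. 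The right edge is controlled by the cone, because $\Lambda\sqrt{-s_0}\ge\Lambda\sqrt{-s}$ for $s\ge s_0$. The initial slice is controlled by Corollary~\ref{cor: sharp asympt line} once $-s_0$ is large, using $\inf p^2\le a_i\le\sup p^2$, which you did observe. Lines do not shrink, so there is no mismatch between radius and elapsed time. Letting $\delta,\varepsilon\to0$ gives the strip without ever needing the fixed-time asymptotic height of the tail.
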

\begin{proof}
 
    By Corollary \ref{cor: sharp asympt line}, $\Phi_t$ contains a part of a sheet, say $\Sigma_t^i$, and
    \begin{align}
        \inf_{s\le t} \: p^2(s)\le a_i \le \sup_{s\le t} \: p^2(s).
    \end{align}

    Let $\delta, \varepsilon>0$. We consider a graph of the line with a slope $\delta$ passing through $(p^1, \sup_{s\le t}\: p^2(s)+\varepsilon)$ and a graph of the line with a slope $-\delta$ passing through $(p^1, \inf_{s\le t}\: p^2(s)-\varepsilon)$. By Lemma \ref{lem: asymptotic slope}, there is $\Lambda=\Lambda(\delta)>0$ such that for all $s\le t_0$,
    \begin{align} \label{distant part}
    \Gamma_s\cap\left\{x^1\ge \Lambda\sqrt{-s}\right\}    
    \end{align}
     is bounded by the two graphs and $\{x^1=\Lambda\sqrt{-s}\}$. Then, for all $s_0\le s\le t$, both of
     \begin{align*}
         \Phi_s\cap \{x^1=p^1\}
     \end{align*}
     and 
     \begin{align*}
         \Phi_s\cap \{x^1=\Lambda\sqrt{-s_0}\}
     \end{align*}
     are bounded by the two graphs. On the other hand, by Corollary \ref{cor: sharp asympt line}, for sufficiently large $-s_0$ such that
     \begin{align*}
         \Phi_{s_0} \cap \{p^1\le x^1\le \Lambda\sqrt{-s_0}\}
     \end{align*}
    is bounded by the two graphs, $\{x^1=p^1\}$, and $\{x^1=\Lambda\sqrt{-s_0}\}$.
    Therefore, by maximum principle and (\ref{distant part}), $\Phi_{t}$ is bounded by the two graphs and $\{x^1=p^1\}$.
    Since $\delta, \varepsilon>0$ are arbitrary constants, we obtained the desired conclusion.
\end{proof}

\begin{corollary}\label{cor: tail in strip}
    Let $\Sigma_t$ be an edge containing a left-going tail that converges to $\{x^2 = a_i\}$ as in Corollary~\ref{cor: sharp asympt line}. Then, for $t \leq t_0$,
    \begin{equation*}
        \Sigma_t\cap \{-\infty< x^1\leq R\sqrt{-t}\} \subset \{(x^1,x^2): \lvert x^2 - a_i \rvert \leq \exp(- \lvert t\rvert^{2\delta}) \}.
    \end{equation*}
\end{corollary}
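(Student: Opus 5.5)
This corollary combines Proposition~\ref{prop: tail in strip} (applied to left-going tails, by the reflection $x^1\mapsto -x^1$) with Corollary~\ref{cor: sharp asympt line}. The plan is to take the vertical line $\{x^1=p^1\}$ at the right end of the region in question, namely $p^1 = R\sqrt{-t}$, control the height of the edge there by the sharp asymptotics, and let the strip proposition confine everything to the left of that line.

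\textbf{Step 1: interior region.} For $\lvert x^1\rvert\le R\sqrt{-t}$, the edge $\Sigma_t$ coincides, inside the graphical radius, with the sheet $\Sigma^i_t$ converging to $\{x^2=a_i\}$. Corollary~\ref{cor: sharp asympt line} then gives, for $t\le t_R$,
\[
\lvert x^2-a_i\rvert\le \exp(-\lvert t\rvert^{2\delta}).
\]
This already handles $\lvert x^1\rvert\le R\sqrt{-t}$.

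\textbf{Step 2: far-left region $x^1\le -R\sqrt{-t}$.} Here I use the reflected version of Proposition~\ref{prop: tail in strip}. The subtlety is that the proposition takes a \emph{fixed} abscissa $p^1$, with the bounds $\inf_{s\le t}p^2(s)$ and $\sup_{s\le t}p^2(s)$ over all earlier times, whereas our abscissa $-R\sqrt{-t}$ moves with $t$.

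\begin{itemize}
\item Fix $t$ and set $p^1:=-R\sqrt{-t}$.
\item For $s\le t$ we have $R\sqrt{-s}\ge R\sqrt{-t}=\lvert p^1\rvert$, so $\lvert p^1\rvert\le R\sqrt{-s}$.
\item Corollary~\ref{cor: sharp asympt line} at time $s$ therefore gives $\lvert p^2(s)-a_i\rvert\le \exp(-\lvert s\rvert^{2\delta})$.
\item Since $\lvert s\rvert\ge \lvert t\rvert$, this is at most $\exp(-\lvert t\rvert^{2\delta})$.
\end{itemize}

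Hence $\sup_{s\le t}p^2(s)$ and $\inf_{s\le t}p^2(s)$ both lie in $[a_i-\exp(-\lvert t\rvert^{2\delta}),\,a_i+\exp(-\lvert t\rvert^{2\delta})]$. Applying Proposition~\ref{prop: tail in strip} (reflected) then gives
\[
\Sigma_t\cap\{x^1\le p^1\}\subset\{\lvert x^2-a_i\rvert\le \exp(-\lvert t\rvert^{2\delta})\}.
\]
Combining this with Step 1 proves the claim.

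\textbf{Main obstacle.} The delicate point is bookkeeping: the tail must stay a single graphical piece crossing $\{x^1=p^1\}$ exactly once for all $s\le t$, so that $p(s)$ is well defined. This should follow from the $\varepsilon$-trombone structure and Lemma~\ref{lem: asymptotic slope}. The stated range $t\le t_0$ should also be read with $t_0$ replaced by $\min(t_0,t_R)$, since Corollary~\ref{cor: sharp asympt line} requires $t\le t_R$.
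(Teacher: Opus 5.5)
Your proposal is correct and takes the same approach as the paper. The paper's proof is the one-line remark that Proposition~\ref{prop: tail in strip} combined with Corollary~\ref{cor: sharp asympt line} gives the result, and your Step~2 supplies the bookkeeping left implicit there: you freeze $p^1=-R\sqrt{-t}$ and use that $R\sqrt{-s}$ and $\exp(-\lvert s\rvert^{2\delta})$ are monotone in $s\le t$, which bounds $\sup_{s\le t}\lvert p^2(s)-a_i\rvert$. Your caveat that the time range should really be $t\le\min(t_0,t_R)$ is apt; the only slip is that your opening plan says $p^1=R\sqrt{-t}$, while your execution correctly uses $p^1=-R\sqrt{-t}$.
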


\begin{proof}
    Combining Proposition~\ref{prop: tail in strip} and Corollary~\ref{cor: sharp asympt line} yields the desired result.    
\end{proof}

By a similar argument, we obtain the following result for fingers.

\begin{proposition}[finger in a strip]\label{prop: finger in strip}
    Let $\Gamma_t$ be a right-pointing finger of $M_t$ and let $p^1 \geq 0$ be a fixed value. Denote by $p_\pm(t):=(p^1,p_\pm^2(t))$ the intersection points of $\Gamma_t$ and $x^1 = p^1$ with $p_-^2(t) < p_+^2(t)$. Then,
    \begin{equation*}
        \Gamma_t \cap \big\{(x^1,x^2) \::\: x^1 \geq p^1 \big\}\subset \big\{(x^1,x^2)\::\: \inf_{s\leq t}\, p_-^2(s) \leq x^2 \leq \sup_{s\leq t}\, p_+^2(s)\big\}.
    \end{equation*}
   See Figure~\ref{fig: finger in strip}.
\end{proposition}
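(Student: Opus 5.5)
We follow the proof of Proposition~\ref{prop: tail in strip}: a barrier argument with two lines of small slope, Lemma~\ref{lem: asymptotic slope} to control the far part of the finger, and Corollary~\ref{cor: sharp asympt line} to control the initial time. The barriers are the upper ray $L^+_{\delta,\varepsilon}$ of slope $\delta$ through $(p^1,\sup_{s\le t}p^2_+(s)+\varepsilon)$ and the lower ray $L^-_{\delta,\varepsilon}$ of slope $-\delta$ through $(p^1,\inf_{s\le t}p^2_-(s)-\varepsilon)$, with $\delta,\varepsilon>0$ arbitrary. Both are static solutions of the flow. We show that for $s_0\le s\le t$ the piece $\Gamma_s\cap\{x^1\ge p^1\}$ stays in the wedge-shaped region $W$ bounded by $L^\pm_{\delta,\varepsilon}$ and $\{x^1=p^1\}$. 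Letting $\delta,\varepsilon\to0$ then gives the claim.

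\textbf{Initial time.} The finger $\Gamma_{s}$ consists of two sheets $\Sigma^i_s,\Sigma^j_s$ joined near a tip that escapes to $+\infty$. By Corollary~\ref{cor: sharp asympt line}, on $|x^1|\le R\sqrt{-s}$ these sheets are exponentially close to $\{x^2=a_i\}$ and $\{x^2=a_j\}$. Since $p^2_\pm(s)\to a_{i},a_j$ as $s\to-\infty$, we have $\inf p^2_-\le\min(a_i,a_j)$ and $\max(a_i,a_j)\le\sup p^2_+$. Choosing $-s_0$ large, $\Gamma_{s_0}\cap\{p^1\le x^1\le \Lambda\sqrt{-s_0}\}$ lies inside $W$.

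\textbf{Far part.} By Lemma~\ref{lem: asymptotic slope} with this $\delta$, for $s\le t_0$ and $x^1\ge\Lambda\sqrt{-s}$ we have $|x^2|<\delta x^1$. After enlarging $\Lambda$ if needed, so that the offsets $\pm\varepsilon$ and the constants $\sup p^2_+$, $\inf p^2_-$ are absorbed, this places the part of $\Gamma_s$ beyond $x^1=\Lambda\sqrt{-s}$ inside $W$. This holds in particular at the tip, which lies far to the right for $s\ll -1$. If the tip does not lie beyond $\Lambda\sqrt{-s}$ for all $s\le t$, it is still interior to the compact region handled below.

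\textbf{Maximum principle.} Apply the avoidance principle on the compact space-time domain $\{p^1\le x^1\le\Lambda\sqrt{-s_0}\}\times[s_0,t]$. On the left wall the finger meets $\{x^1=p^1\}$ only at $p_\pm(s)$, which lie strictly inside $W$ by construction of $\varepsilon$. On the right wall the far-part control applies, since $\Lambda\sqrt{-s_0}\ge\Lambda\sqrt{-s}$. At time $s_0$ the finger is inside $W$. So $\Gamma_s$ cannot touch $L^\pm_{\delta,\varepsilon}$ for the first time at an interior point, by strong comparison with a static line, and the inclusion holds up to time $t$.

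\textbf{Main obstacle.} The delicate point is making sure the portion of $\Gamma_s\cap\{x^1\ge p^1\}$ under consideration is a connected arc whose only boundary points on $\{x^1=p^1\}$ are $p_\pm(s)$. This rules out the finger re-entering the region through the left wall and escaping the barrier there. We would handle this using the almost-parallel edges structure at the $\varepsilon$-trombone time together with the sheet description: each edge is graphical on $|x^1|\le R\sqrt{-s}$, so $\{x^1=p^1\}$ meets $\Gamma_s$ exactly twice. Where that is unavailable, we would argue exactly as in Proposition~\ref{prop: tail in strip} with the two edges treated as tails sharing the endpoint at the tip.
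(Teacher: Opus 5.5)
Your proposal is correct and follows the paper's intended argument. The paper proves this only by saying it follows ``by a similar argument'' to Proposition~\ref{prop: tail in strip}, which is exactly what you carry out: barriers of slope $\pm\delta$ through $(p^1,\sup p_+^2+\varepsilon)$ and $(p^1,\inf p_-^2-\varepsilon)$, Lemma~\ref{lem: asymptotic slope} for the far part, Corollary~\ref{cor: sharp asympt line} at an early time $s_0$, and the maximum principle with the boundary points $p_\pm(s)$ strictly inside the wedge.

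One small repair is needed, and the paper's tail proof has the same slip. Apply Lemma~\ref{lem: asymptotic slope} with a strictly smaller slope, say $\delta/2$. With slope $\delta$, the cone $\{|x^2|<\delta x^1\}$ lies under the barrier $x^2=\sup p_+^2+\varepsilon+\delta(x^1-p^1)$ only when $\sup p_+^2+\varepsilon\ge \delta p^1$, a condition that does not depend on $x^1$. So enlarging $\Lambda$ cannot absorb the offsets, for instance when the finger lies below the $x^1$-axis. With slope $\delta/2$ the inclusion holds once $x^1\ge 2p^1-2(\sup p_+^2+\varepsilon)/\delta$, and the same fix works for the lower barrier.
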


\begin{figure}[h]
    \begin{center}
            \includegraphics[width=0.75\linewidth]{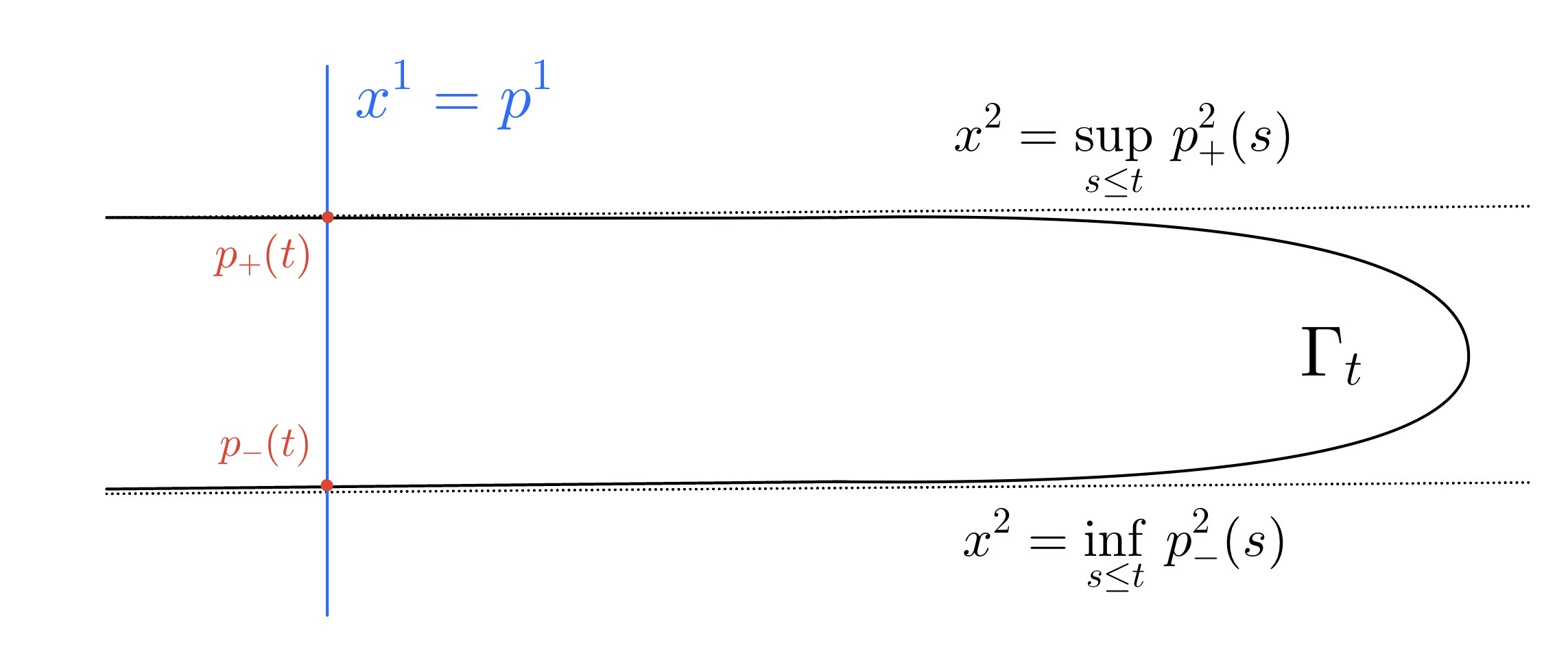}
    \end{center}
    \caption{Finger in strip.}
        \label{fig: finger in strip}
\end{figure}

\begin{proposition}[solution in strip]\label{prop: solution in strip}
    For all $t$,
    \begin{equation*}
        M_t \subset \R\times (-A, A),
    \end{equation*}
    where $A := 1 + \max\{\lvert a_1\rvert, \ldots, \lvert a_m\rvert\}$ and $a_i$'s are stated in Corollary~\ref{cor: sharp asympt line}.
\end{proposition}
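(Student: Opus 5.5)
We want to show that the whole curve lies in the horizontal strip $\R\times(-A,A)$ for all times. The plan is to combine the far-field confinement of Lemma~\ref{lem: asymptotic slope} with the near-field asymptotics of Corollary~\ref{cor: sharp asympt line}, and then let the maximum principle push the bound forward in time.

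First we decompose $M_t$, for $t\le t_0$, into its right- and left-going tails and its right- and left-pointing fingers; after the $\varepsilon$-trombone time the curve consists of exactly these pieces. Fix the vertical line $x^1=0$. By Corollary~\ref{cor: sharp asympt line} with $R=1$, every point of $M_t\cap\{x^1=0\}$ lies on some sheet $\Sigma^i_t$ and has height within $\exp(-|t|^{2\delta})<1$ of $a_i$. So $|p^2(s)|<A$ for all $s\le t\le t_R$.

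We then apply the strip results with $p^1=0$.
\begin{itemize}
\item For a right-going tail we use Proposition~\ref{prop: tail in strip}; the left-going case follows by reflection $x^1\mapsto -x^1$.
\item For a right-pointing finger we use Proposition~\ref{prop: finger in strip}, and again reflect for left-pointing ones.
\end{itemize}
The infimum and supremum of $p^2_\pm(s)$ over $s\le t$ lie in $[-A+\epsilon',A-\epsilon']$ once we take $t\le t_R$ and also let $\exp(-|t|^{2\delta})$ be small. Hence $M_t\subset \R\times(-A,A)$ for all $t\le t_*:=\min(t_0,t_R)$. The portion of $M_t$ with $|x^1|\le\sqrt{-t}$ is handled directly by Corollary~\ref{cor: sharp asympt line}.

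To extend to all later times, we use the avoidance principle against the static lines $\{x^2=\pm A\}$, which are themselves solutions of CSF. Since $M_{t_*}$ lies strictly between them, $M_t$ stays strictly between them for every $t\in[t_*,T)$.

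For a noncompact $M_t$ this requires a maximum principle at infinity, and this is the main obstacle. Heights are bounded at time $t_*$ by the first step. At later times we can control the far-out parts of the tails by Lemma~\ref{lem: asymptotic slope}, or by comparison with large shrinking circles as in its proof, which keeps the curve inside slightly larger strips over compact time intervals. Alternatively, we can apply the non-compact maximum principle to $x^2$, which satisfies the heat equation along the flow. This is legitimate because the curvature is bounded on compact time intervals and the finite-entropy assumption gives polynomial area growth.

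The delicate points are the following.
\begin{itemize}
\item The finger and tail propositions are stated for $t\le t_0$, so we must check that the decomposition into fingers and tails is valid at those times.
\item Near $x^1=0$ the sheets really do account for every crossing of $M_t$ with the vertical line. This follows from the trombone structure in \cite{CSSZ24}, since all fingers have their tips at distance $\gg\sqrt{-t}$ from the origin.
\end{itemize}
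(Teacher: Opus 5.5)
Your proposal is correct and takes essentially the paper's route: you confine every tail and finger for $t\ll -1$ via Propositions~\ref{prop: tail in strip} and~\ref{prop: finger in strip}, with the intercept heights bounded through Corollary~\ref{cor: sharp asympt line}, and then carry the bound to all later times by avoidance against the static lines $\{x^2=\pm A\}$. Your extra care with noncompact avoidance (e.g.\ large shrinking circles, which work because $M_{t_*}$ lies in a strictly smaller closed strip) fills in what the paper leaves implicit; note only that your intercept bound also needs $A>|a_0|$, i.e.\ $A=1+\max_{0\le i\le m}|a_i|$, which is how the paper uses $A$ elsewhere.
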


\begin{proof}
    Applying Proposition~\ref{prop: tail in strip} and \ref{prop: finger in strip} to all fingers and tails of $M_t$, the result for $t\ll -1$ follows.
    Since $x^2 = \pm A$ are stationary solutions of CSF, by the avoidance principle, the result extends for all $t$.
\end{proof}

\bigskip

\subsection{Global geometry of the finger}\label{sec: glob.geom}

In this subsection, we always consider $\Gamma_t$ as a sub-flow of an \emph{extended} right-pointing finger of $M_t$, that is, $\Gamma_t$ is a union of two edges meeting at the unique sharp vertex $v(t)$ of $\Gamma_t$ for $t\leq t_{\vare}$ where $t_{\vare}$ is the $\vare$-trombone time (see Section 2 for the definition of $t_{\vare}$).
Additionally, we will always state the properties on the \emph{lower} edge $\Sigma_t$ of $\Gamma_t$; the corresponding properties on the upper edge $\Sigma_t'$ can be obtained by reflection across the $x^1$-axis.
Furthermore, as in Theorem~\ref{thm: sharp asympt line}, we assume that two sheets of $\Gamma_t$ converge to $x^2 = a_i, a_j$, respectively, where $a_i \leq a_j$; in particular, $\Sigma_t$ converges to $x^2 = a_i$.

Let $R\gg 1$, let $\be_1, \be_2$ be the standard orthonormal basis of $\R^2$, and let $\gamma(s, t)$ be an (orientation-preserving) arc-length parametrization of $\Gamma_t$ such that $\lvert \bt(s,t) - \be_1\rvert< \tfrac{1}{100}$ everywhere in $\Sigma_t \cap B(0, R\sqrt{-t})$ for all $t \leq t_R = -e^{-\tau_R}$ where $\tau_R$ is chosen to satisfy  \cite[Theorem 1.2]{CSSZ24} (and Theorem~\ref{thm: sharp asympt line}).
Recall that $\bt = \pa_s \gamma$ denotes the unit tangent vector and $\bn = J\bt$ denotes the unit normal vector of $\gamma$ where $J:\R^2 \to \R^2$ is the linear rotating map by $\tfrac{\pi}{2}$ counterclockwise.
Later, $t$ is always chosen so that $t \leq t_R$.

The angle function $\theta$ is a useful tool in the later discussion. 
Choose $\theta(s, t)$ to be the \emph{continuous} function measuring the angle of $\bt(s,t)$ deviating from $\be_1$ counterclockwise such that $\theta \approx 0$ in $\Sigma_t \cap B(0, R\sqrt{-t})$.
Recall that by \cite[Theorem 6.9]{CSSZ24} the angle difference between two knuckles of a finger is approximately $\pm\pi$ for $t \ll -1$.
Since $\Gamma_t$ is right-pointing, $\theta \approx \pi$ on the upper edge $\Sigma'_t$ of $\Gamma_t$ in $B(0, R\sqrt{-t})$.
Furthermore, the curvature at the tip (and at the sharp vertex \cite[Theorem 6.9]{CSSZ24}) of $\Gamma_t$ is \emph{positive}.

\bigskip
The following descriptions include all possible cases for $\Sigma_t$:
\begin{description}
    \item[(A)] $\Sigma_t$ is a curved segment connecting to an adjacent finger, by \cite[Lemma 6.13]{CSSZ24}, 
    \begin{description}
        \item[(A1)]\label{A1} $\Sigma_t$ has a unique inflection point with increasing curvature, or
        \item[(A2)] $\Sigma_t$ is convex ($\kappa>0$) and has a unique flat vertex.
    \end{description}
    \item[(B)] $\Sigma_t$ contains a tail and by \cite[Proposition 6.14]{CSSZ24} $\Sigma_t$ is convex ($\kappa>0$). On $\Sigma_t$, $\kappa$ is increasing in $s$ and decays to $0$ as $s \to -\infty$. 
\end{description}

\begin{proposition}\label{prop: anal.ang.kap.}
For $t\leq t_R$, the angle function $\theta(\cdot, t)$ and the curvature function $\kappa(\cdot, t)$ on $\Sigma_t$ have the following analytic significance:\\
\noindent {\bf (A1)}: $\theta(\cdot, t)$ is a positive convex function and $\kappa(\cdot, t)$ is an increasing function.\\
\noindent {\bf (A2)}: $\theta(\cdot, t)$ is an increasing function and $\kappa(\cdot, t)$ is a positive convex function.\\
\noindent {\bf (B)}: $\theta(s, t)$ and $\kappa(s, t)$ are both positive increasing functions in $s$ and decay to 0 as $s \to -\infty$.
\end{proposition}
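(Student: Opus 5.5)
The argument rests on the identity $\theta_s = \kappa$ along the arc-length parametrization $\gamma(s,t)$ of $\Gamma_t$. Each of the three qualitative descriptions (A1), (A2), (B), taken from \cite[Lemma 6.13, Proposition 6.14]{CSSZ24}, fixes the sign pattern and monotonicity of $\kappa$ on $\Sigma_t$. Integrating once then gives the corresponding behaviour of $\theta$. Two normalizations anchor $\theta$: $\theta\approx 0$ on $\Sigma_t\cap B(0,R\sqrt{-t})$, and $\theta\approx\pi$ on the upper edge. We also know that $\kappa>0$ at the sharp vertex $v(t)$, which is the right endpoint of $\Sigma_t$ in the $s$-direction.

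\textbf{Case (A1).} Here $\Sigma_t$ has a unique inflection point $s_0$ and $\kappa(\cdot,t)$ is increasing. So $\kappa<0$ for $s<s_0$ and $\kappa>0$ for $s>s_0$. Hence $\theta$ decreases then increases, with its minimum at $s_0$, and $\theta_s=\kappa$ increasing means $\theta$ is convex. For positivity, I need $\theta(s_0,t)>0$. I would argue as follows. The left endpoint of $\Sigma_t$ is the sharp vertex of the adjacent finger, which is left-pointing since $\Sigma_t$ connects to it. So the tangent there has turned by roughly $\pi$ from the horizontal, and by the angle-difference statement \cite[Theorem 6.9]{CSSZ24} the edge has no room to dip below $0$. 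More precisely, $\theta\approx 0$ on the graphical part, and the smallness $\lvert \bt-\be_1\rvert<\tfrac1{100}$ keeps $\theta$ near $0$ there. The issue is the sign of $\theta$ at the inflection point. I would use orientation: since $\Sigma_t$ is the lower edge, traversing toward $v(t)$, the curve must pass from the adjacent (lower or left) finger. Because $\kappa<0$ on the left part, $\theta$ decreases from its left-endpoint value, which is about $\pi$ (the adjacent finger's sharp vertex), down to the minimum. The sheet convergence to $x^2=a_i$ from Corollary~\ref{cor: sharp asympt line}, together with Proposition~\ref{prop: solution in strip}, then pins the minimum within $o(1)$ of $0$. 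Positivity itself would have to come from a comparison or Sturmian argument. Concretely, I would show that the minimum of $\theta$ cannot be $\le 0$: if it were, the graph would be non-increasing in a region and then increasing to the tip, which would create an extra zero of $x^2-a_i$ crossings. I expect this to contradict the intersection-count (Sturm) with the static line $x^2=a_i$, using the sharp asymptotics. This positivity step is the main obstacle, and the Sturm intersection argument with horizontal lines is what I would try first.

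\textbf{Case (A2).} Now $\kappa>0$ on $\Sigma_t$, so $\theta$ is increasing. There is a unique flat vertex, a local minimum of $\kappa$, and the two ends are sharp vertices, so $\kappa_s$ changes sign once from negative to positive. Convexity of $\kappa$, that is $\kappa_{ss}\ge0$, is not implied by this alone. For it I would use the evolution equation $\kappa_t=\kappa_{ss}+\kappa^3$. I would combine it with the sheet asymptotics: the profile is exponentially close to $a_i$ (Theorem~\ref{thm: sharp asympt line}) and approximates grim-reaper pieces near the tips, where $\kappa$ is $\cosh$-like and hence convex. A maximum-principle argument on $\kappa_{ss}$ along the edge, together with Sturmian counting of zeros of $\kappa_{ss}$ propagated from $t\to-\infty$, would then give convexity for all $t\le t_R$.

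\textbf{Case (B).} Here $\kappa>0$ is increasing and decays to $0$ as $s\to-\infty$. Then $\theta$ is increasing. Since $\theta\approx 0$ on the graphical part and the tail converges to $x^2=a_i$ (Corollary~\ref{cor: tail in strip}), $\theta\to 0$ as $s\to-\infty$. Together with monotonicity this gives $\theta>0$.
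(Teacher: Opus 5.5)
Your treatment of (B) is essentially the paper's argument, once you add that a nonzero monotone limit of $\theta$ would drive the tail out of the strip of Corollary~\ref{cor: tail in strip}. Your derivation of the convexity of $\theta$ in (A1) from $\theta_s=\kappa$ is also what the paper does. The gap is the positivity of $\theta$ in (A1), which is the only nontrivial part of the statement, and your sketch does not supply it.

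Incidentally, $\theta\approx\tfrac{\pi}{2}$, not $\pi$, at the left vertex. The curve arrives along the lower edge of the adjacent left-pointing finger in direction $-\mathbf{e}_1$ and turns clockwise through $\mathbf{e}_2$ to $\mathbf{e}_1$.

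Your claim that the sheet asymptotics pin the minimum of $\theta$ within $o(1)$ of $0$ is circular. Corollary~\ref{cor: sharp asympt line} controls only $\Sigma_t\cap B(0,R\sqrt{-t})$, and nothing yet prevents the inflection point from lying outside that ball. A Sturm count against $x^2=a_i$ does not help either. A dip with $\theta<0$ need not meet that particular line at all. For a line $x^2=c$ that does meet the dip three times, bounding the intersection number by its value as $t\to-\infty$ requires knowing that the edges are monotone far from the origin at very negative times. That is the statement itself: the later results giving such control, Proposition~\ref{prop: ang.away.tip} and Theorem~\ref{thm: exp.conv.away.tip}, rely on this proposition.

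The idea you are missing is the paper's tilted height function.
\begin{itemize}
\item Fix $\theta_0\in(\max\{-\tfrac{\pi}{2},\min\theta\},0)$ and set $E=\langle(-\sin\theta_0,\cos\theta_0),\gamma\rangle$. Then $E$ solves $E_t=E_{ss}$, and the two points $q<s_{\mathrm{infl}}<p$ where $\theta=\theta_0$ are a local maximum and a local minimum of $E$.
\item Sturmian theory extends these to ancient paths $q(t),p(t)$ of critical points, and the maximum principle makes $E$ monotone along them.
\item Together with the strip bound of Proposition~\ref{prop: solution in strip}, this controls their horizontal drift backward in time.
\item For $t\ll-1$ the two points are then forced either into the region where $|\theta|\le\exp(-|t|^{2\delta})<|\theta_0|$, or to straddle the origin, which would produce a point with $\mathbf{t}\approx-\mathbf{e}_1$ there. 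Both are contradictions.
\item The borderline case $\min\theta=0$ is excluded by Sturm's theorem and the strong maximum principle applied to $\theta$, which satisfies $\theta_t=\theta_{ss}$.
\end{itemize}

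In (A2) you set out to prove $\kappa_{ss}\ge0$ on the whole edge, but that is false. Both endpoints of $\Sigma_t$ are sharp vertices, so $\kappa_s=0$ at each end, and a nondecreasing $\kappa_s$ that vanishes at both ends vanishes identically. Your heuristic is also off. Near a tip $\kappa$ is $\mathrm{sech}$-like in curvature scale (for the unit grim reaper, $\kappa(s)=\mathrm{sech}\,s$), hence concave there rather than $\cosh$-like. Moreover, $\partial_t\kappa_{ss}=(\kappa_{ss})_{ss}+5\kappa^2\kappa_{ss}+8\kappa\kappa_s^2$ is not linear homogeneous, so zero-counting for $\kappa_{ss}$ is unavailable.

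``Convex'' in (A2) must be read as ``$\kappa$ decreases and then increases''. That is all that is ever used, e.g.\ $\kappa\le\max\{\kappa(c),\kappa(d)\}$ in Proposition~\ref{prop: curv.away.tip}. You already obtained this from the unique flat vertex ($\kappa_s$ changes sign once), and the paper simply reads it off from the description of case (A2). You should stop there.
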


\begin{proof}
    Positivity of $\theta$ in case (A1) will be shown in Proposition~\ref{prop: min angle at infl pt} later, and the rest of the statements in cases (A1) and (A2) follow directly from the above descriptions, setting of $\Sigma_t$, and the relation $\theta_s = \kappa$.

    In case (B), it is clear that $\theta(\cdot, t)$ and $\kappa(\cdot, t)$ are increasing, $\kappa(\cdot, t) >0$, and $\kappa(s,t) \to 0$ as $s\to -\infty$.
    It remains to show that $\theta(\cdot, t) > 0$ and $\theta(s,t) \to 0$ as $s\to -\infty$.
    Suppose that $\theta(s_0, t) \leq 0$ for some $s_0$. 
    Then for all $s\leq s_0$, $\theta(s,t) < 0$.
    Note that $\kappa$ remains very small in the tail region due to the estimate in the central region and the monotonicity of $\kappa$.
    Putting these together implies that Corollary~\ref{cor: tail in strip} will be violated for some $s\ll -1$. 
    Similary, $\theta(s,t) \to \theta_0 >0$ as $s\to -\infty$ would also contradict Corollary~\ref{cor: tail in strip}.
\end{proof}

\begin{proposition}\label{prop: min angle at infl pt}
    Assume (A1) holds.
    Then $\theta >0$ everywhere on $\Sigma_t$. 
    In particular, the curve segment of $\Sigma_t$ bounded between the minimum and maximum points of $x^1$-cooridnate is a graph over an interval in the $x^2$-axis.
\end{proposition}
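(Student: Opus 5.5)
In case (A1), the lower edge $\Sigma_t$ has a unique inflection point $q(t)$ and $\kappa$ is increasing along $s$. Since $\theta_s = \kappa$, the angle $\theta$ is decreasing where $\kappa<0$, i.e.\ before $q(t)$, and increasing after it. So $\theta(\cdot,t)$ attains its global minimum on $\Sigma_t$ exactly at $q(t)$, and it suffices to show $\theta(q(t),t)>0$.

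The plan is to argue by contradiction: suppose $\theta(q(t),t)\le 0$. Because $\theta\approx 0$ on $\Sigma_t\cap B(0,R\sqrt{-t})$, the minimum must be close to $0$ if $q(t)$ lies in the central region. I would therefore split into two cases.

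\emph{Case 1: $q(t)$ lies in the central region.} Here I would use the sharp asymptotics of Theorem~\ref{thm: sharp asympt line}. By Corollary~\ref{cor: sharp asympt line}, the sheet $\Sigma^i_t$ is exponentially close to $\{x^2=a_i\}$ on $|x^1|\le R\sqrt{-t}$. Along $\Sigma_t$, moving towards the adjacent finger, $\kappa<0$ until $q(t)$ and then $\kappa>0$. In the angle picture, $\Sigma_t$ leaves the sharp vertex $v(t)$ with $\theta$ decreasing from its value there, reaches its minimum at $q(t)$, and then increases towards the adjacent finger's vertex. For the adjacent finger, which is left-pointing, $\theta$ must return near $\pi$ (\cite[Theorem 6.9]{CSSZ24}: the knuckle angle difference is $\approx\pm\pi$).

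If $\theta(q)\le0$, then just past $q$ the curve moves with $x^2$ nonincreasing, while on the far side it must turn up by nearly $\pi$. A slightly negative angle on an otherwise convex, curvature-increasing arc would force $x^2$ to dip below $a_i$. That part of $\Sigma_t$ turning around lies outside the central ball. I would then invoke Proposition~\ref{prop: finger in strip}, applied to the adjacent left-pointing finger with $p^1$ chosen at the position of $q$, together with the strip bound $\inf_{s\le t}p_-^2(s)$. Since $p_-^2(s)\to a_i$ with exponential rate, this gives $x^2\ge a_i - \exp(-|t|^{2\delta})$ beyond $p^1$. This bound has to be compared with the dip, so I would need a quantitative lower bound on the dip produced by a negative angle over the convex part. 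This comparison is the delicate point.

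\emph{Case 2: $q(t)$ lies outside $B(0,R\sqrt{-t})$.} This case is the main obstacle; the cleaner route I would try first is a maximum-principle argument in time. The quantity $\min_{\Sigma_t}\theta$ satisfies $\theta_t=\theta_{ss}$, the angle evolving by the heat equation in arclength up to the commutator; for CSF, $\theta_t=\kappa_s$. At an interior minimum of $\theta$ we have $\kappa=0$, and $\kappa_s\ge0$ because $\kappa$ is increasing. Hence $\min\theta$ is nondecreasing in $t$.

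It is therefore enough to show $\min\theta>0$ for $t\ll-1$, or at least $\liminf_{t\to-\infty}\min\theta\ge0$ together with strictness. For $t\to-\infty$ I would use the $\varepsilon$-trombone structure (\cite[Definition~7.1]{CSSZ24}), in which edges are almost parallel and fingers are close to $\tfrac{\varepsilon}{100}$-grim reapers. On a grim reaper, $\theta\in(0,\pi)$ strictly, and the inflection region between two adjacent reapers has angle near $0$. Positivity would be obtained from the strong maximum principle: if $\min\theta$ equalled $0$ at some time, then $\theta\equiv 0$ locally, so the edge would contain a straight segment, contradicting the uniqueness of the inflection point. The remaining gap is the endpoint behavior. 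The endpoints of $\Sigma_t$ are sharp vertices, where $\theta\approx0$ or $\pi$ with positive curvature, so the minimum of $\theta$ is never attained at an endpoint. I expect ensuring $\min\theta\ge0$ at very negative times to be the hardest step, and there I would fall back on the strip estimate of Proposition~\ref{prop: finger in strip} as in Case 1.

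\emph{Graph statement.} Once $\theta>0$ on $\Sigma_t$ is established, $\theta$ is convex with minimum at $q$. Consider the segment between the minimum and maximum points of $x^1$; along it $\bt$ points rightward, i.e.\ $\theta\in(0,\pi/2]$ in the appropriate range. Since $\theta\in(0,\pi)$ there, we have $dx^2/ds=\sin\theta>0$, so $x^2$ is strictly monotone along the segment. This makes the segment a graph over an interval of the $x^2$-axis.
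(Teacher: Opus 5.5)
Your reduction is sound as far as it goes. The minimum of $\theta$ on $\Sigma_t$ is attained at the inflection point, and $m(t):=\theta(q(t),t)$ satisfies $m'(t)=\kappa_s(q(t),t)\ge 0$. The strong maximum principle then upgrades $\ge 0$ to $>0$; the paper uses the same Sturm/strong-maximum-principle step for its borderline case. But this only reduces the proposition to $\lim_{t\to-\infty}m(t)\ge 0$, which is the entire content of the statement, and you leave it open.

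The only a priori angle control is in the central region $\Sigma_t\cap B(0,R\sqrt{-t})$ (via Corollary~\ref{cor: sharp asympt line}) and near the vertices. The inflection point need not stay central: heuristically, for an edge joining fingers of widths $w_1\neq w_2$ it sits near $x^1\approx\pi(w_1^{-1}-w_2^{-1})|t|$. So your Case 2 is the generic case, not an exceptional one. If instead $q(t_k)$ were central along a sequence $t_k\to-\infty$, then $|m(t_k)|\le\exp(-|t_k|^{2\delta})$ and monotonicity alone would finish; you would not need the Case 1 dip argument at all.

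Your two proposed tools do not close this gap.
\begin{itemize}
\item Appealing to the $\varepsilon$-trombone structure to say the angle is near $0$ between adjacent reapers is circular. In this paper, smallness of $\theta$ along whole edges (Lemma~\ref{lem: direction of vertex}, Proposition~\ref{prop: ang.away.tip}) is derived \emph{from} this proposition.
\item The strip estimates of Proposition~\ref{prop: finger in strip} cannot detect the sign of an angle. They locate the edge only within a strip whose boundary $\inf_{s\le t}p_-^2(s)$ differs from $a_i$ by an error of uncontrolled sign. The depth of the dip produced by a negative angle depends on the length of the arc where $\theta<0$, and you have no lower bound on that without curvature control there. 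So the ``dip versus strip'' comparison cannot be made quantitative.
\end{itemize}

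The missing idea is a way to turn the strip bound into angle information far from the origin; the paper does this with a tilted height function.
\begin{itemize}
\item Suppose $\theta(q(t_0),t_0)<0$. Fix $\theta_0\in(\max\{-\tfrac{\pi}{2},\theta(q(t_0),t_0)\},0)$ and set $E=\langle\mathbf{a},\gamma\rangle$ with $\mathbf{a}=(-\sin\theta_0,\cos\theta_0)$.
\item Then $E_s=\sin(\theta-\theta_0)$. Hence the two points $q<s_{\mathrm{infl}}<p$ with $\theta=\theta_0$ are a local maximum and a local minimum of $E$, because $\kappa<0$, respectively $\kappa>0$, there.
\item Since $E$ solves the heat equation, Sturmian theory continues them to ancient paths $q(t)<p(t)$ of local extrema, and the maximum principle makes their values monotone in time.
\item Combined with $|x^2|<A$ from Proposition~\ref{prop: solution in strip}, this gives bounded horizontal drift: $q^1(t')\ge q^1(t)-A'$ and $p^1(t')\le p^1(t)+A'$ for $t'\le t\le t_0$.
\item Points with $\theta=\theta_0$ cannot enter the expanding region $B(0,R\sqrt{-t})$, where $|\theta|<|\theta_0|$. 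So $q^1(t_1)\le 0$ or $p^1(t_1)\ge 0$ would trap such a point at bounded distance from the origin, which is impossible.
\item The remaining configuration $p^1<0<q^1$ with $q<p$ would force $\bt\approx-\be_1$ somewhere in the central graphical region, which is also impossible.
\end{itemize}
Without an argument of this kind your proposal does not establish the proposition.
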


\begin{figure}[h]
    \begin{center}
            \includegraphics[width=0.8\linewidth]{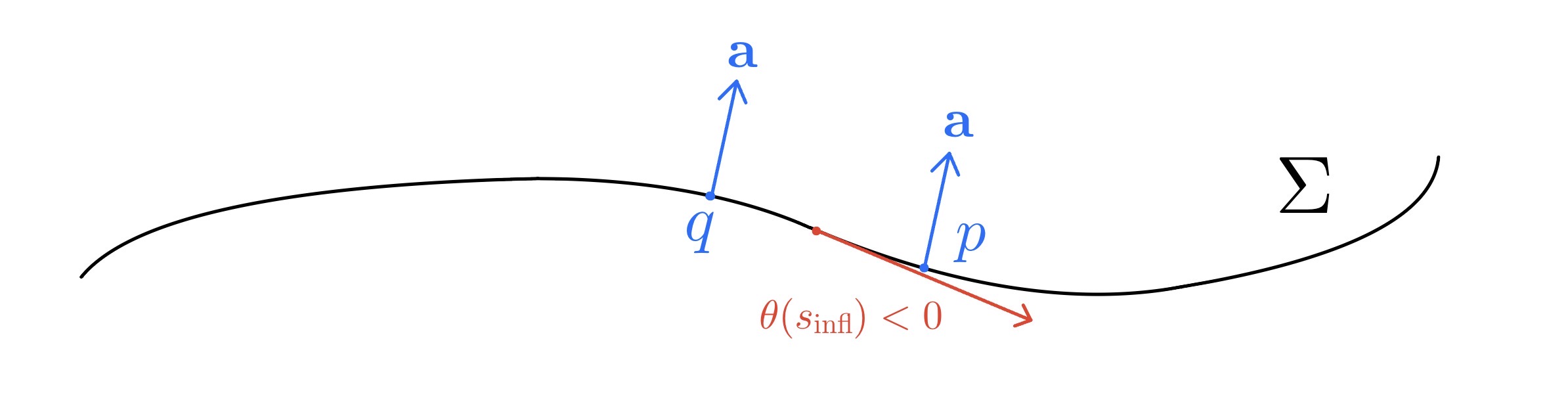}
    \end{center}
    \caption{An impossible sheet in case (A1) with $\theta(s_{\mathrm{infl}})<0$.}
        \label{fig: nongraphical}
\end{figure}

\begin{proof}
    By Proposition~\ref{prop: anal.ang.kap.}, $\theta(s, t)$ is a convex function in $s$ with a unique minimum at the inflection point $\gamma(s_{\mathrm{infl}}(t), t)$. 
    Suppose there exists a point on $\Sigma_{t_0}$ such that $\theta < 0$ for some $t_0$. 
    This immediately implies that $\theta(s_{\mathrm{infl}}(t_0), t_0) < 0$. 
    Pick a value $\theta_0$ such that $\max\{-\frac{\pi}{2}, \theta(s_{\mathrm{infl}}(t_0), t_0)\} < \theta_0 < 0$. Then there exist exactly two distinct points $(q^1(t_0), q^2(t_0)) := \gamma(q, t_0)$ and $(p^1(t_0), p^2(t_0)) := \gamma(p, t_0)$ with $q < s_{\mathrm{infl}}(t_0) < p$ satisfying $\theta(q, t_0) = \theta(p, t_0) = \theta_0$ as depicted in Figure~\ref{fig: nongraphical}.
    
    Choose the unit vector $\mathbf{a} = (-\sin \theta_0, \cos \theta_0)$ in the first quadrant and define the height function $E(s,t) = \langle \mathbf{a}, \gamma(s,t)\rangle$. 
    By the monotonicity of $\theta(\cdot, t_0)$ on either side of the inflection point, it follows that $p$ is a local minimum point of $E$ and $q$ is a local maximum point of $E$. Since $E$ satisfies the heat equation $\partial_t E = \partial_{ss} E$, Sturmian theory guarantees the existence of ancient paths $p(t)$ and $q(t)$ consisting of local minimum and local maximum points of $E$, respectively, such that $q(t) < p(t)$ for all $t\leq t_0$, $p(t_0) = p$ and $q(t_0) = q$.
    
    By the maximum principle, $E(p(t'), t') \leq E(p(t), t)$ and $E(q(t'), t') \geq E(q(t), t)$ for all $t' \leq t \leq t_0$. Applying Proposition~\ref{prop: solution in strip}, there exists $0< A' < \infty$ such that for all $t' \leq t \leq t_0$,
    \begin{align}
        q^1(t') &\geq q^1(t) - \cot\theta_0 \big( q^2(t) - q^2(t')\big) \geq q^1(t) - A', \label{eq: monotone q1}\\
        p^1(t') &\leq p^1(t) - \cot\theta_0 \big( p^2(t) - p^2(t')\big) \leq p^1(t) + A'. \label{eq: monotone p1}
    \end{align}

    According to Corollay~\ref{cor: sharp asympt line} and the setting of $\theta$, there exists $t_1 < t_R$ such that for all $t \leq t_1$:
    \begin{align}\label{eq: small angle}
        |\theta| \leq \exp(-\lvert t \rvert^{2\delta}) < |\theta_0| \quad \text{on} \quad \Sigma_{t} \cap B(0, R\sqrt{-t}).
    \end{align}
    We claim that $p^1(t_1) < 0 < q^1(t_1)$. Suppose, for the sake of contradiction, that $q^1(t_1) \leq 0$. Then by \eqref{eq: monotone q1}, $q^1(t_1) - A' \leq q^1(t) \leq 0$ for all $t \leq t_1$. When $t \ll t_1$ is chosen such that $R\sqrt{-t} > |q^1(t_1) + A|$, the fact that $\theta(q(t)) = \theta_0$ contradicts the small-angle estimate \eqref{eq: small angle}. A symmetric argument for $p^1$ shows that $p^1(t_1)$ cannot be non-negative.
    
    However, the condition $p^1(t_1) < 0 < q^1(t_1)$ is also impossible. Given that $q(t_1) < p(t_1)$, there exists a point within $\Sigma_{t_1} \cap B(0, R\sqrt{-t_1})$ satisfying $\bt \approx -\be_1$ that contradicts the initial setting of parametrization of $\gamma$. Thus, we conclude that $\theta \geq 0$ everywhere on $\Sigma_t$.

    Finally, we address the borderline case where $\theta = 0$ at the inflection point. Since $\theta$ also satisfies the parabolic equation $\partial_t \theta = \partial_{ss} \theta$, applying Sturm's theorem to $\theta$ around its local minimum (the inflection point) and invoking the strong maximum principle yields a contradiction. Consequently, $\theta > 0$ everywhere. In particular, the segment bounded by the extrema of the $x^1$-coordinate satisfies $\theta \in (0, \pi]$, and thus forms a graph over an interval in the $x^2$-axis.
\end{proof}

\begin{proposition}[optimal lower bound of curvature of a sharp vertex]\label{prop: vertex optimal speed}
    Let $\vare$ be sufficiently small. There exists $t_{\vare}\ll -1$ such that for all $t\leq t_{\vare}$
    \begin{equation}\label{eq: vertex optimal speed}
        \rvert\kappa\big(v(t)\big)\rvert \geq \frac{(1-2\vare)\pi}{\lvert a_i - a_j\rvert + 2\vare}.
    \end{equation}
    If $a_i \neq a_j$, then
    \begin{align}
        \liminf_{t\to -\infty} \,\lvert \kappa(v(t))\rvert \geq \frac{\pi}{\lvert a_i - a_j\rvert}.
    \end{align}
    If $a_i = a_j$, then the limit inferior is $+\infty$.
\end{proposition}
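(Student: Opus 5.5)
The plan is to compare the finger with a translating grim reaper through a comparison/area argument in the strip, using that the sharp vertex essentially moves with speed governed by the width of the finger. The cleanest route is a Gauss–Bonnet/enclosed-area computation on the finger region. Fix $\vare$ and let $p^1\ge 0$ be large. By Corollary~\ref{cor: sharp asympt line} and the finger-in-strip result (Proposition~\ref{prop: finger in strip}), for $t\le t_\vare$ the part of $\Gamma_t$ to the right of $\{x^1=p^1\}$ is contained in the strip $\{a_i-\vare\le x^2\le a_j+\vare\}$, of width at most $|a_i-a_j|+2\vare$. At the crossing points $p_\pm(t)$ the tangent angles are within $\vare$ of $0$ and $\pi$ respectively, by the sharp asymptotics and the choice of $\theta$.

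The main step is to bound the curvature at the tip from below. Consider the portion $\Gamma_t^{p}:=\Gamma_t\cap\{x^1\ge p^1\}$ together with the region $\Omega_t$ it bounds with the vertical segment $\{x^1=p^1\}$. Along $\Gamma_t^p$ the total turning is $\int\kappa\,ds=\theta(p_+)-\theta(p_-)\ge (1-2\vare)\pi$. Case analysis from Proposition~\ref{prop: anal.ang.kap.} then applies: on each edge, $\kappa$ is increasing toward the vertex in cases (A2)/(B), and in case (A1) it is increasing, while $\theta>0$ by Proposition~\ref{prop: min angle at infl pt}. Hence $\kappa\le\kappa(v(t))$ along $\Gamma_t^p$ (the vertex being the unique sharp vertex, so the curvature maximum). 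I would then parametrize $\Gamma_t^p$ by angle. Since $\theta$ runs monotonically from about $0$ to about $\pi$ near the vertex, the $x^2$-extent equals $\int \sin\theta\,ds = \int \sin\theta\,\kappa^{-1}\,d\theta\ge \kappa(v)^{-1}\int_{\vare\pi}^{(1-\vare)\pi}\sin\theta\,d\theta$. That only yields $2/\kappa$, which is not sharp. So instead I would use the evolution: the enclosed area $|\Omega_t|$ satisfies $\frac{d}{dt}|\Omega_t| = -\int_{\Gamma^p_t}\kappa\,ds + (\text{flux through } x^1=p^1)$, and the flux is tiny by the exponential asymptotics. The tip moves right with speed at most $\kappa(v)$: the rightmost point is a tip, where normal speed equals curvature, and by the maximum-of-curvature property its leftward speed is at most $\kappa(v)$. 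On the other hand, the area lost per unit time is about $(1-2\vare)\pi$, while the width is at most $|a_i-a_j|+2\vare$. So the tip recedes with average speed at least $\frac{(1-2\vare)\pi}{|a_i-a_j|+2\vare}$ (up to error). Comparing with the tip speed $\le\kappa(v)$ and using the monotone rate over long intervals gives \eqref{eq: vertex optimal speed}, at least on average. Converting average to pointwise is the obstacle. I would handle it by running the argument on $[t-h,t]$ and using that $\sup_{[t-h,t]}\kappa(v)$ controls displacement, combined with the Harnack-type monotonicity of $\kappa(v(t))$. For an ancient convex-ish finger, Hamilton's Harnack gives $\kappa_t\ge0$ in the relevant sense, or the finger is close to a grim reaper by \cite[Theorem 6.9]{CSSZ24}. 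Together these make $\kappa(v(\cdot))$ essentially nondecreasing, so the sup is attained at the endpoint $t$.

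The limit statements then follow by letting $\vare\to0$ (with $t_\vare\to-\infty$) when $a_i\ne a_j$. When $a_i=a_j$ the right-hand side of \eqref{eq: vertex optimal speed} is $(1-2\vare)\pi/(2\vare)\to\infty$, so the liminf is $+\infty$.
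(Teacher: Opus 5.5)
There is a genuine gap in the step where you convert the area computation into a pointwise bound. Set $w:=\lvert a_i-a_j\rvert+2\vare$ and run your area/turning argument on $[t-h,t]$. What it actually gives is
\[
\sup_{s\in[t-h,t]}\lvert\kappa(v(s))\rvert\;\ge\;\frac{(1-2\vare)\pi}{w}-\frac{D(t)}{h},\qquad D(t):=\max_{\Gamma^p_t}x^1-p^1-\frac{\lvert\Omega_t\rvert}{w}.
\]
Nothing controls the deficit $D(t)$ at this stage. The finger may be much thinner than the strip along a long stretch outside the graphical region $\lvert x\rvert\le R\sqrt{-t}$. So $h$ must be large compared with $D(t)/\vare$, and as $t\to-\infty$ you only obtain a Ces\`aro-type bound such as $\liminf_{t\to-\infty}\frac{1}{\lvert t\rvert}\int_t^{t_0}\lvert\kappa(v(s))\rvert\,ds\ge\pi/\lvert a_i-a_j\rvert$.

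To upgrade this to $\lvert\kappa(v(t))\rvert$ at every $t\le t_\vare$, you appeal to monotonicity of $t\mapsto\lvert\kappa(v(t))\rvert$, which is not available. Hamilton's differential Harnack inequality requires $\kappa>0$ on the whole curve. It comes from a maximum principle for a global quantity and does not localize to a single finger. Here $M_t$ is not convex: case (A1) edges have inflection points, and so do the trombones that turn out to be the answer. The grim-reaper closeness of \cite[Theorem 1.4]{CSSZ24} only pins down $\kappa(v)$ on parabolic windows of duration $O(\vare^{-2}\lvert\kappa(v(t))\rvert^{-2})$. Chaining such windows accumulates errors, so it gives no monotonicity. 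The windows are also too short for $[t-h,t]$ unless you already know $D(t)=O(\lvert\kappa(v(t))\rvert^{-1})$, i.e.\ that the finger fills the strip up to the tip, which is essentially what you are trying to prove. So \eqref{eq: vertex optimal speed} does not follow from your argument.

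The missing observation is that no dynamics is needed: the grim-reaper closeness you already cite gives the sharp bound statically, at each fixed time. For $t\le t_\vare$, rescale by $\lvert\kappa(v(t))\rvert$. Then the finger near $v(t)$ is $\vare$-close in $C^2$ to a grim reaper of width $\pi$. Hence the tip portion of $\Gamma_t$ has $x^2$-extent at least $(1-2\vare)\pi\lvert\kappa(v(t))\rvert^{-1}$; the paper phrases this as the enclosed region containing a ball of that diameter. This portion lies in $\Gamma_t\cap\{x^1\ge 0\}\subset\{a_i-\vare\le x^2\le a_j+\vare\}$, so comparing with the strip width gives \eqref{eq: vertex optimal speed} directly for every $t\le t_\vare$. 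This is the paper's proof.

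In terms of your own first computation, the fix is to replace the crude bound $\kappa\le\kappa(v)$ by the grim-reaper profile $\kappa\approx\kappa(v)\sin\theta$ on the tip region. That upgrades $\int\sin\theta\,\kappa^{-1}\,d\theta\ge 2/\kappa(v)$ to approximately $\pi/\kappa(v)$. Your handling of the limits as $\vare\to0$, including the case $a_i=a_j$, is fine.
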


\begin{proof}
    Let $\vare >0$ be small. Without loss of generality, we can assume $a_i \leq a_{j}$.
    By Corollary~\ref{cor: sharp asympt line} and Proposition~\ref{prop: finger in strip}, there exists $t_{\vare} \ll -1$ such that for all $t<t_{\vare}$ 
    \begin{equation*}
        \Gamma_t \cap \{(x^1,x^2)\::\: x^1 \geq 0\}\subset \big\{(x^1,x^2) \::\: a_i-\vare\leq x^2 \leq a_j + \vare\big\}.
    \end{equation*}
    By \cite[Theorem 1.4]{CSSZ24}, by taking $t_{\vare}\ll -1$, for all $t<t_{\vare}$, $\cD_{\lvert \kappa(v(t))\rvert} \big(\cG-v(t)\big) \cap P(0, 1/\vare)$ is $\vare$-close in $C^2$ to a unit-speed translating grim reaper curve whose zero-time-slice has its tip at the origin. Consequently, sufficiently far away from the tip, the region enclosed by $\Gamma_t$ and the vertical line $x^1 = 1$ contains a ball of diameter $(1-2\vare)\pi \rvert\kappa\big(v(t)\big)\rvert^{-1}$.
    Putting all together, for all $t< t_{\vare}$, by comparing the diameter of the ball and the width of the strip, we obtain \eqref{eq: vertex optimal speed}.
    By letting $\vare \to 0^+$, we complete the proof.
\end{proof}

\bigskip

Recall that a model right-pointing grim reaper that has unit curvature at the tip at the origin can be parametrized by its angle $\varphi$ deviated from the angle of the tip via the map
\begin{align*}
    \gamma(\varphi) = (\log \cos \varphi,\varphi ) \quad \text{for $-\tfrac{\pi}{2} <\varphi < \tfrac{\pi}{2}$.}
\end{align*}
The Euclidean distance between $\gamma(\varphi)$ and the tip $(0,0)$ is given by $d^2 := \varphi^2 + (\log \cos \varphi)^2$. 
Thus, if $d\geq 200 \pi$
\begin{align*}
     e^{-d}\leq \cos \varphi = e^{-\sqrt{d^2 - \varphi^2}} \leq e^{-\sqrt{d^2 - \pi^2/4}} \leq e^{-\frac{99}{100}d},
\end{align*}
and therefore, using $\cos \lvert \varphi\rvert = \sin(\tfrac{\pi}{2} - \lvert \varphi\vert)$ and $\sin x\geq \tfrac12 x$ for $0\leq x\ll 1$,
\begin{equation}\label{eq: angle.decay.gr}
    e^{-d}\leq (\tfrac{\pi}{2}- \lvert \varphi\rvert) \leq e^{-\frac{98}{100}d}.
\end{equation}

\bigskip
The following proposition asserts that the vertex of a right-pointing finger is moving almost in $\be_1$-direction for $t\ll -1$. 
\begin{lemma}[travelling direction and horizontal speed of a vertex]\label{lem: direction of vertex}
    Let $\vare >0$ be sufficiently small. 
    There exists $t_{\vare} \ll -1$ such that for all $t\leq t_{\vare}$
    \begin{equation}\label{eq: direction of vertex}
        \lvert \theta(v(t),t) - \tfrac{\pi}{2}\rvert < \vare,
    \end{equation}
    and 
    \begin{align*}
        \tfrac{\pa}{\pa t} \langle v(t), \be_1 \rangle 
        \leq -\frac{(1- 4\vare)\pi}{\lvert a_i - a_j\rvert + 2\vare}
        \leq -\frac{3}{2A},
    \end{align*}
    where $A:= 1+ \max_i \, \lvert a_i\rvert $.
\end{lemma}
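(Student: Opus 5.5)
The plan is to read both claims off the grim-reaper blow-up at the vertex, \cite[Theorem 1.4]{CSSZ24}, and then feed in the curvature lower bound of Proposition~\ref{prop: vertex optimal speed}.

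\emph{Step 1: the angle.} For $t\le t_\vare$, the rescaled flow $\cD_{\lvert\kappa(v(t))\rvert}(\cG-v(t))\cap P(0,1/\vare')$ is $\vare'$-close in $C^2$ to a unit-speed translating grim reaper with tip at the origin; here $\vare'\ll\vare$ is to be fixed. I need the direction in which this model translates. Far from the tip, the two arms of the model must be $C^1$-close to the two edges of $\Gamma_t$ at scale comparable to the width $\lvert a_i-a_j\rvert$. By the strip confinement (Proposition~\ref{prop: finger in strip} with Corollary~\ref{cor: sharp asympt line}) and the normalization $\theta\approx 0$ on $\Sigma_t$ and $\theta\approx\pi$ on $\Sigma_t'$, the asymptotic directions of the model's arms are therefore within $O(\vare')$ of $\pm\be_1$. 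Since the model's arms are parallel to its translation direction, the tip tangent is perpendicular to $\be_1$, up to $O(\vare')$.

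The subtle point is to pass from ``$C^2$-close on a parabolic ball of size $1/\vare'$'' to control of the arm direction. I will use the explicit decay \eqref{eq: angle.decay.gr}: on the model, the angle at distance $d\gtrsim 1/\vare'$ from the tip is within $e^{-0.98d}$ of $\pm\pi/2$ relative to the tip angle. On the actual curve, at such points, $\theta$ is within $\vare'$ of $0$ or $\pi$. This uses Proposition~\ref{prop: anal.ang.kap.}, namely the monotonicity of $\theta$ along the edge and its boundedness in $[0,\pi]$ on the convex part near the vertex. Together these give $\lvert\theta(v(t),t)-\pi/2\rvert<\vare$. The sign is $+\pi/2$ rather than $-\pi/2$ because $\theta$ increases from about $0$ to about $\pi$ along the finger, passing through the vertex where $\kappa>0$.

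\emph{Step 2: the speed.} The vertex is a critical point of $\kappa$, so its velocity is not literally the normal velocity. I will instead use the $C^2$ closeness in the parabolic neighborhood, which includes time. In the rescaled picture the vertex moves, up to error $\vare'$, like the tip of the unit-speed grim reaper, i.e. with velocity $\bn$ at the tip. With tangent angle near $\pi/2$, this normal $J\bt$ points near $-\be_1$, which is consistent with a right-pointing finger retreating leftward as $t$ increases. Scaling back gives
\[
\tfrac{d}{dt}v(t)=\lvert\kappa(v(t))\rvert\,(-\be_1+O(\vare)).
\]
If differentiability of $v$ is an issue, I will work with difference quotients over short times, or use that the sharp vertex is nondegenerate, so $v(t)$ is smooth by the implicit function theorem.

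\emph{Step 3: combine.} Taking the $\be_1$-component gives $\pa_t\langle v,\be_1\rangle\le-(1-\vare)\lvert\kappa(v)\rvert$. Inserting \eqref{eq: vertex optimal speed}, whose bound $(1-2\vare)\pi/(\lvert a_i-a_j\rvert+2\vare)$ multiplied by $(1-\vare)$ dominates the stated $(1-4\vare)$ version, yields the first inequality. The final bound uses $\lvert a_i-a_j\rvert\le 2A-2$, so $\lvert a_i-a_j\rvert+2\vare\le 2A$, and $(1-4\vare)\pi/(2A)\ge 3/(2A)$ for small $\vare$.

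I expect the main obstacle to be Step 1's identification of the model's orientation. This amounts to ruling out a model grim reaper that is tilted relative to the strip, and it requires care in matching scales between the blow-up region and the sheet region.
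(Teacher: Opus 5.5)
Your proposal is correct and follows essentially the paper's route. Grim-reaper closeness at the vertex, together with the decay \eqref{eq: angle.decay.gr}, ties the angle at a comparison point on each arm to $\theta(v(t),t)\mp\tfrac{\pi}{2}$. The monotonicity and positivity of $\theta$ from Propositions~\ref{prop: anal.ang.kap.} and~\ref{prop: min angle at infl pt} then pin down the tilt, and the horizontal speed is read off the model tip and combined with Proposition~\ref{prop: vertex optimal speed}.

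Three points to tighten. First, those propositions only give one-sided bounds: $\theta\gtrsim 0$ past the central region on the lower edge, and $\theta\lesssim\pi$ on the upper edge. So the two edges must be combined, as the paper does via reflection, rather than claiming two-sided closeness to $0$ or $\pi$ on one edge. Second, the bound $\lvert\kappa(v(t))\rvert^{-1}\le 2A$ is already needed in Step~1, to place the comparison point on the part of the lower edge beyond the central region. Third, the strip-confinement heuristic alone would not control the tilt, since the vertex curvature scale is not yet known to be comparable to $\lvert a_i-a_j\rvert$ at this stage; nondegeneracy is proved later using this lemma.
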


\begin{proof}
    Let $\vare>0$ be sufficiently small such that $\vare^{\frac12} \lvert\log \vare\rvert < \tfrac{1}{100}$.
    By applying reflection across the $x^1$-axis, it suffices to show that for $t \ll -1$
    \begin{equation}\label{eq: direction of vertex1}
        \theta(v(t),t) -\tfrac{\pi}{2} \geq  -\vare.
    \end{equation}
    By \cite[Theorem 1.4]{CSSZ24}, there exists $t_{\vare} \ll -1$ such that for $t \leq t_{\vare}$, $\Gamma_{t}  \cap B(v(t), 100\vare^{-1}\lvert \kappa(v(t))\rvert^{-1})$ is $\tfrac{\vare}{100}$-close in $C^2$ to a grim reaper curve of width $\pi\lvert \kappa(v(t))\rvert^{-1}$.
    Let $q_t$ denote the intersection point in the lower sheet $\Sigma_t \cap \pa B(v(t), 50\vare^{-\frac12}\lvert \kappa(v(t))\rvert^{-1})$.
    By the estimate \eqref{eq: angle.decay.gr}, 
    \begin{equation}\label{eq: angle diff of g.r.}
        \theta(q_t, t) - \tfrac{\vare}{25}  \leq \theta(v(t), t) - \tfrac{\pi}{2} \leq \theta(q_t, t) + \tfrac{\vare}{25},
    \end{equation}
    where the contribution of angle difference between $v(t)$ and $q_t$ from the model grim reaper curve is less than $\exp(-49\vare^{-\frac12})$ due to \eqref{eq: angle.decay.gr}, significantly less than $\tfrac{\vare}{100}$ due to the assumption on smallness of $\vare$ in the beginning.
    It follows from Proposition~\ref{prop: vertex optimal speed} that by taking $t_\vare\leq t_1$, for all $t\leq t_\vare$ the scale 
    \begin{equation}\label{eq: curv.scale}
        \lvert \kappa(v(t))\rvert^{-1} \leq (\lvert a_i - a_j \rvert + \vare) \leq 2A.
    \end{equation}
    Thus, for $\langle v(t),\be_1\rangle \gg 1$, $q_t$ lies in $\{(x^1, x^2)\,:\, x^1 \geq 0\}$.

    If (A1) holds, then Proposition~\ref{prop: min angle at infl pt} implies that $\theta(q_t, t)\geq 0$
    and thus \eqref{eq: angle diff of g.r.} gives \eqref{eq: direction of vertex1}.
    By taking $t_{\vare}\ll -1$ and by \cite[Theorem 1.2]{CSSZ24}, $\lvert \theta\rvert < \tfrac{\vare}{100}$ in the graphical region $\Sigma_t \cap B_{10}(0)$.
    If (A2) or (B) holds, then $\theta$ is increasing in $s$ and thus $\theta(q_t, t) > -\tfrac{\vare}{100}$ on $\Sigma_t \cap \{x^1 \geq 0\}$, implying \eqref{eq: direction of vertex1}.

    Finally, it follows from the property of an $\tfrac{\vare}{100}$-grim reaper, Proposition~\ref{prop: vertex optimal speed}, and \eqref{eq: curv.scale} that for $t\leq t_{\vare}$
    \begin{align*}
        \tfrac{\pa}{\pa t} \langle v(t), \be_1 \rangle \leq (1- \tfrac{\vare}{100})\langle \kappa(v(t))\bn, \be_1\rangle 
        \leq -\tfrac{(1- 4\vare)\pi}{\lvert a_i - a_j\rvert + 2\vare}
        \leq -\tfrac{3}{2A},
    \end{align*}
    provided that $\vare$ is sufficiently small.
\end{proof}

\bigskip
\begin{proposition}[interior gradient estimate]\label{prop: ang.away.tip}
    Let $\vare >0$ be sufficiently small. 
    There exists $t_{\vare} \ll -1$ such that for all $t\leq t_{\vare}$, if one of the following cases holds:
    \begin{description}
        \item[(A)] $\Sigma_t$ is a compact edge with vertices $v_1(t) = \big(v_1^1(t), v_1^2(t))$ and $v_2(t)= \big(v_2^1(t), v_2^2(t)\big)$ such that $v_1^1(t) < v_2^2(t)$, and $p = (p^1, p^2)\in \Sigma_t$ with 
        \begin{equation}\label{eq: range1}
            v_1^1(t) + 50\vare^{-\frac12}\lvert \kappa\big(v_1(t)\big)\rvert^{-1} 
            \leq p^1 \leq v_2^1(t) - 50\vare^{-\frac12} \lvert \kappa\big(v_2(t)\big)\rvert^{-1},
        \end{equation}

        \item[(B)] $\Sigma_t$ is a noncompact edge with only one vertex $v(t) = \big( v^1(t), v^2(t)\big)$ and $p = (p^1, p^2) \in \Sigma_t$ with 
        \begin{equation}\label{eq: range2}
            p^1 \leq  v^1(t) - 50\vare^{-\frac12} \lvert \kappa\big(v(t)\big)\rvert^{-1},
        \end{equation}
    \end{description}
    then
    \begin{equation*}
        \lvert \theta(p,t) \rvert \leq 2\vare.
    \end{equation*}
    In particular, $\Sigma_t$ is a graph of $U(x,t)$ with $\lvert U_x\rvert \leq 4\vare$ for $x$ in the domain of \eqref{eq: range1} or \eqref{eq: range2} .
\end{proposition}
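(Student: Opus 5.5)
The plan is to get the bound from the shape of $\theta$ along $\Sigma_t$ (Proposition~\ref{prop: anal.ang.kap.}) together with the angle estimates already available near each vertex. Near each sharp vertex, the $\tfrac{\vare}{100}$-grim reaper approximation from \cite[Theorem 1.4]{CSSZ24} combines with \eqref{eq: angle.decay.gr} and Lemma~\ref{lem: direction of vertex}. At the point $q_t$, the intersection of $\Sigma_t$ with $\pa B(v(t),50\vare^{-1/2}\lvert\kappa(v(t))\rvert^{-1})$, this gives $\lvert\theta(q_t,t)\rvert\le \vare+\tfrac{\vare}{25}<2\vare$. In the same way we control $\theta$ at the two such points $q^{(1)}_t,q^{(2)}_t$ near $v_1,v_2$ in case (A), and at the single point $q_t$ near $v$ in case (B). Because the grim reaper is almost horizontal there, the $x^1$-coordinate of $q_t$ is $v^1 \mp 50\vare^{-1/2}\lvert\kappa\rvert^{-1}$ up to a negligible error. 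Hence the points $p$ satisfying \eqref{eq: range1} or \eqref{eq: range2} lie on the arc of $\Sigma_t$ between the $q$'s, or beyond $q_t$ on the tail side in case (B). This uses the grim reaper structure to see that the part of $\Sigma_t$ inside the ball near the vertex projects onto the excluded $x^1$-range.

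Case (B) is immediate. By Proposition~\ref{prop: anal.ang.kap.}, $\theta$ is positive and increasing in $s$, with $\theta\to 0$ as $s\to-\infty$. So for $p$ on the tail side of $q_t$ we get $0<\theta(p,t)\le\theta(q_t,t)<2\vare$.

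In case (A2), $\theta$ is increasing. Along the arc from $q^{(1)}_t$ to $q^{(2)}_t$ it therefore lies between $\theta(q^{(1)}_t)$ and $\theta(q^{(2)}_t)$. One endpoint angle must be read modulo the orientation of the neighbouring finger. The grim reaper estimates give both values within $2\vare$ of $0$, since this edge is asymptotically horizontal with $\theta\approx 0$ in the central region.

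In case (A1), $\theta$ is convex in $s$ and positive by Proposition~\ref{prop: min angle at infl pt}. A convex function is bounded on an interval by its maximum at the two endpoints, so $0<\theta(p,t)\le\max\{\theta(q^{(1)}_t),\theta(q^{(2)}_t)\}<2\vare$.

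Once $\lvert\theta\rvert\le2\vare<\tfrac{\pi}{2}$ on this arc, the arc is a graph over its $x^1$-projection with $\lvert U_x\rvert=\lvert\tan\theta\rvert\le 4\vare$ for small $\vare$, which is the graphical statement.

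The main obstacle is the bookkeeping near the second vertex in case (A). The edge $\Sigma_t$ there belongs to an adjacent finger pointing the other way, so the angle normalisation differs by $\pi$. One must check that Lemma~\ref{lem: direction of vertex}, applied to that left-pointing finger after a reflection, still gives $\lvert\theta(q^{(2)}_t)\rvert\le 2\vare$ in our normalisation. One must also check that \eqref{eq: range1} really excludes the points near the vertices, including the case where $v_1^1(t)$ and $v_2^1(t)$ are close and the range is empty. I would handle this by applying the reflection $x^1\mapsto -x^1$ and using the continuity of $\theta$ along $\Sigma_t$ through the central region, where $\theta\approx0$ by Theorem~\ref{thm: sharp asympt line}.
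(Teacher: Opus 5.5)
Your proposal is correct and follows essentially the paper's argument: you control $\theta$ by the grim-reaper approximation at distance $50\vare^{-1/2}\lvert\kappa\rvert^{-1}$ from each vertex, then use convexity (A1), monotonicity (A2), or positive monotonicity (B) of $\theta$ from Proposition~\ref{prop: anal.ang.kap.}. The only differences are cosmetic. The paper defines $q$ directly by its $x^1$-coordinate rather than as a sphere intersection. It also anchors at the $x^2$-intercept $\gamma(0)$, where $\theta\approx 0$ by the central-region estimate, and handles the half of the edge near $v_1$ "by reflection" (the branch identification you flag). You instead take the two near-vertex points as endpoints of a single arc.
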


\begin{proof}
    We will follow the same setting in the proof of Lemma~\ref{lem: direction of vertex} and freeze $t\leq t_{\vare}$. 

    Suppose that {\bf Case (A)} holds.
    Let $\gamma:[a, b]\rightarrow \R^2$ be an arclength parametrization of $\Sigma_t$ with $\gamma(a) = v_1(t)$, $\gamma(0)$ the intersection point of $\Sigma_t$ with the $x^2$-axis, and $\gamma(b) = v_2(t)$.
    By Lemma~\ref{lem: direction of vertex} and by taking $t_{\vare}\ll -1$, we assume that for $i = 1,2$, $M_{t}  \cap B(v_i(t), 100\vare^{-1}\lvert \kappa(v_i(t))\rvert^{-1})$ is $\tfrac{\vare}{100}$-close in $C^2$ to a grim reaper curve of width $\pi\lvert \kappa(v_i(t))\rvert^{-1}$, with a small tilted angle ($<\vare$) between the axis of symmetry and the $x^1$-axis. 
    Let $q =(q^1, q^2)= \gamma(c)$, with $0 < c < b$, denote the point in $\Sigma_t$ with $q^1 = v_2^1(t) - 50 \vare^{-\frac12}\lvert \kappa(v_2(t))\rvert^{-1}$. 
    In view of \eqref{eq: angle diff of g.r.}, $\lvert \theta(c)\vert\leq 2\vare$.
    By reflection, it suffices to show that $\lvert\theta(s)\rvert\leq 2\vare$ for $s \in [0, c) $.

    \noindent{\bf Case (A1)} : By Proposition~\ref{prop: anal.ang.kap.}, $\theta$ is a positive convex function.
    Then \eqref{eq: direction of vertex} and \eqref{eq: angle diff of g.r.} imply that for any $s\in [0, c)$
    \begin{equation*}
        0 < \theta(s) \leq \max\, \{\theta(0), \theta(c)\} \leq 2\vare.
    \end{equation*}
    \noindent{\bf Case (A2)} : By Proposition~\ref{prop: anal.ang.kap.}, $\theta$ is an increasing function.
    For $s\in [0, c)$, 
    \begin{equation*}
        -\tfrac{\vare}{100} < \theta(0) \le \theta(s) \leq \theta(c) < 2\vare. 
    \end{equation*}

    Suppose that {\bf Case (B)} holds. 
    Let $\gamma: (-\infty, b]\rightarrow \R^2$ be an arclength parametrization of $\Sigma_t$ with  $\gamma(b) = v(t)$ and $\gamma(c) = q$.
    By Proposition~\ref{prop: anal.ang.kap.}, $\theta$ is a positive increasing function.
    Therefore, for $s\in (-\infty, c)$
    \begin{equation*}
        0 < \theta(s) \leq \theta(c) < 2\vare.
    \end{equation*}
   
\end{proof}

\begin{proposition}[curvature estimate]\label{prop: curv.away.tip}
    Let $\vare>0$ be sufficiently small and let $t_{\vare}\ll -1$ and $L := 100\vare^{-\frac12}A$.
    For any $t\leq t_{\vare}$ and any $D > 0$, if one of the following cases holds:
    \begin{description}
        \item[(A)] $\Sigma_t$ is a compact edge with vertices $v_1(t) = \big(v_1^1(t), v_1^2(t))$ and $v_2(t)= \big(v_2^1(t), v_2^2(t)\big)$ such that $v_1^1(t) < v_2^2(t)$, and $p = (p^1, p^2)\in \Sigma_t$ with 
        \begin{equation*}
            v_1^1(t) + L + D 
            \leq p^1 \leq v_2^1(t) - L - D,
        \end{equation*}

        \item[(B)] $\Sigma_t$ is a noncompact edge with only one vertex $v(t) = \big( v^1(t), v^2(t)\big)$ and $p = (p^1, p^2) \in \Sigma_t$ with 
        \begin{equation*}
            p^1 \leq  v^1(t) - L -D,
        \end{equation*}
    \end{description}
    then
    \begin{align*}
        \lvert \kappa(p, t)\rvert \leq \frac{4\vare}{D}.
    \end{align*}
    
\end{proposition}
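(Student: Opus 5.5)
The plan is to turn the gradient bound of Proposition~\ref{prop: ang.away.tip} into a curvature bound through the identity $\theta_s=\kappa$ and the monotonicity/convexity of $\theta$ and $\kappa$ from Proposition~\ref{prop: anal.ang.kap.}. Fix $t\le t_\vare$. By \eqref{eq: curv.scale}, $\lvert\kappa(v_i(t))\rvert^{-1}\le 2A$, so $50\vare^{-1/2}\lvert\kappa(v_i(t))\rvert^{-1}\le L$. Hence every point $q\in\Sigma_t$ whose $x^1$-coordinate lies in $[v_1^1+L,\,v_2^1-L]$ (case (A)), or satisfies $q^1\le v^1-L$ (case (B)), falls in the range of Proposition~\ref{prop: ang.away.tip}. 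There $\lvert\theta\rvert\le 2\vare$, and $\Sigma_t$ is a graph with $\lvert U_x\rvert\le 4\vare$. It follows that arclength and horizontal displacement are comparable, $ds\le (1+16\vare^2)^{1/2}dx\le 2\,dx$, and in the other direction $dx\ge \cos(2\vare)\,ds\ge \tfrac12 ds$.

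Case (B) is the simplest. Here $\kappa>0$ and $\kappa$ is increasing in $s$. Take $p$ with $p^1\le v^1-L-D$, and let $q$ be the point of $\Sigma_t$ with $q^1=p^1+D$, which still lies in the good range. Integrate $\theta_s=\kappa$ from $p$ to $q$ and use monotonicity:
\[
\kappa(p)\,\ell \le \int_p^q\kappa\,ds=\theta(q)-\theta(p)\le 4\vare,
\]
where $\ell\ge D$ is the arclength from $p$ to $q$. This gives $\kappa(p)\le 4\vare/D$. Positivity gives the lower bound for free.

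Case (A2) is handled the same way. Now $\kappa>0$ is convex, so on $[p^1-D,\,p^1+D]$ the value $\kappa(p)$ is bounded by the maximum of the averages of $\kappa$ over one of the two one-sided windows. Both windows lie in the good range because of the $L+D$ margin. Convexity means $\kappa$ is monotone on at least one side of $p$, and on that side $\kappa\ge\kappa(p)$. Integrating $\theta_s=\kappa$ over that side and using $\lvert\theta\rvert\le 2\vare$ at both endpoints gives $\kappa(p)D\le 4\vare$.

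Case (A1) is the same argument with signs. Here $\kappa$ is increasing and changes sign at the inflection point. If $\kappa(p)>0$, integrate over the window to the right of $p$, where $\kappa\ge\kappa(p)>0$. If $\kappa(p)<0$, integrate over the window to the left, where $\kappa\le\kappa(p)<0$. In either case $\lvert\kappa(p)\rvert D\le\lvert\theta(q)-\theta(p)\rvert\le 4\vare$.

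The main obstacle is bookkeeping rather than analysis. First, the $x^1$-windows of length $D$ must stay inside the range where Proposition~\ref{prop: ang.away.tip} applies, which the extra $D$ in the hypothesis provides on the needed side. Second, converting the horizontal length $D$ into an arclength of at least $D$ uses only $ds\ge dx$, which holds trivially for a graph. So the constant $4\vare$ comes out exactly from $\lvert\theta(q)-\theta(p)\rvert\le 2\vare+2\vare$.
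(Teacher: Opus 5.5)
Your proof is correct and uses the same ingredients as the paper: the angle bound $|\theta|\le 2\varepsilon$ from Proposition~\ref{prop: ang.away.tip} (made applicable with margin $L$ by \eqref{eq: curv.scale}), the identity $\theta_s=\kappa$, and the monotonicity or convexity of $\kappa$ from Proposition~\ref{prop: anal.ang.kap.}. The only difference is where the window sits: the paper applies the mean value theorem in the two length-$D$ windows next to the excluded vertex regions and then sandwiches $\kappa(p)$ between those values by monotonicity or convexity, whereas you integrate over a length-$D$ window adjacent to $p$ on the side where $|\kappa|\ge|\kappa(p)|$. Your version also works consistently in $x^1$-coordinates and needs only $\ell\ge D$, which is slightly tidier than the paper's arclength phrasing of the windows.
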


\begin{proof}
    Let $t\leq t_{\vare}$ be fixed. 
    Suppose that {\bf Case (A)} holds.
    Let $\gamma:[a, b]\rightarrow \R^2$ be an arclength parametrization of $\Sigma_t$ with $\gamma(a) = v_1(t)$ and $\gamma(b) = v_2(t)$.
    By Proposition \ref{prop: ang.away.tip} and \eqref{eq: curv.scale}, for any $s\in (a+L, b-L)$, $\lvert\theta(s)\rvert < 2\vare$. 
    The mean value theorem implies that there exist $c\in (a+L, a+L+D)$ and $d\in (b-L-D, b-L)$ such that $\lvert \kappa(c)\rvert < 4\vare/D$ and $\lvert \kappa(d)\rvert < 4\vare/D$.
    
    \noindent{\bf Case (A1)} : By Proposition~\ref{prop: anal.ang.kap.}, $\kappa$ is increasing.
    It follows that for $s\in (a+ L + D, b-L-D)$,
    \begin{equation*}
        -\tfrac{4\vare}{D} < \kappa(c) \le \kappa(s) \le \kappa(d) < \tfrac{4\vare}{D}.
    \end{equation*}

    \noindent{\bf Case (A2)} : By Proposition~\ref{prop: anal.ang.kap.}, $\kappa$ is a positive convex function.
    Thus,
    \begin{equation*}
        0< \kappa(s) \leq \max\:\{\kappa(c), \kappa(d)\} < \tfrac{4\vare}{D}.
    \end{equation*}

    \bigskip
    Suppose that {\bf Case (B)} holds: $\Sigma_t$ is a noncompact edge with only one vertex $v(t)$. 
    Let $\gamma: (-\infty, b]\rightarrow \R^2$ be an arclength parametrization of $\Sigma_t$ with $\gamma(b) = v(t)$.
    By Proposition \ref{prop: ang.away.tip}, \eqref{eq: curv.scale}, and the Mean Value Theorem, we may argue as in Case (A) to find some $d\in (b-L-D, b-L)$ satisfying $\lvert \kappa(d)\rvert < 4\vare/D$.
    By Proposition~\ref{prop: anal.ang.kap.}, $\kappa$ is a positive increasing function and thus for $s < b-L-D$,
    \begin{equation*}
        0< \kappa(s) \le \kappa(d) < \tfrac{4\vare}{D}.
    \end{equation*}
\end{proof}

\section{Nondegeneracy of finger and sharp tip asymptotic}

\subsection{Exponential convergence of a sheet}
In this section, we will show the nondegeneracy of the finger and the sharp asymptotic of the vertex. 
The key tool is the following quantitative convergence theorem.
\begin{theorem}[quantitative exponential convergence of edge]\label{thm: quan.conv}
Let $\vare_0\in (0,\tfrac{1}{100})$, $A>0$ be arbitrary.
There exist constants $D_0\gg 1$ and $\beta \in (0,\tfrac{1}{100})$ depending only on $A$ but not on $\vare_0$ with the following significance: 
Let $D \geq D_0$,
\begin{equation*}
    \varrho(t) := D + \tfrac{1}{2A}(-t)\quad \mbox{for }\, t\leq 0,
\end{equation*}
and let $\Sigma_t$ be a sheet of CSF that can be expressed as a graph of $U$, where $U:[-\varrho(t),\varrho(t)]\times (-\infty,0]\to \mathbb{R}$ satisfies for all $t\leq 0$
\begin{equation*}
    \lVert U(\cdot, t)\rVert_{L^\infty([-\varrho(t), \varrho(t)])} < A, \quad \|U_x(\cdot,t)\|_{L^\infty([-\varrho(t),\varrho(t)])} \leq \varepsilon_0,
\end{equation*}
and 
\begin{equation*}
    \|U_{xx}(\cdot,t)\|_{L^\infty([-\varrho(t),\varrho(t)])} \leq \frac{\varepsilon_0}{\varrho(t)}.
\end{equation*}
Assume that $U\to 0$ in $C^\infty_{\text{loc}}$ as $t \to -\infty$. 
Then, for all $t\leq 0$
\begin{equation*}
    |U(0,t)|\leq e^{-\beta D}.
\end{equation*}
\end{theorem}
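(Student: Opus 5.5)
The plan is to derive Theorem~\ref{thm: quan.conv} from Theorem~\ref{thm: general asymp line}. For each fixed time $t_0\le 0$ I would perform a parabolic rescaling centred at a suitably chosen space-time point $(0,T)$ with $T>t_0$, verify the hypotheses \eqref{hyp: bdd.log.d.rho.1}--\eqref{hyp: bdd.log.d.rho.2} and \eqref{hyp: bounds}, \eqref{hyp: decay.cond} with constants depending only on $A$, and then read off $U(0,t_0)$ from the Gaussian $L^2$ estimate. The first conclusion of Theorem~\ref{thm: general asymp line} is quantitative and uniform in $a$ (since $(1+a^2)$ is explicit), whereas the $L^\infty$ conclusion has $\tau_R$ depending on $a$. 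I will therefore use only the $\mathcal H$-estimate and convert it to a pointwise bound at $y=0$ by hand.

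\textbf{Setup of the rescaling.} Fix $t_0\le 0$, put $T:=t_0+\lambda$ with $\lambda:=c_A D$ for a constant $c_A$ to be chosen, and set $s:=T-t$, $\tau:=-\log s$, and
\[
u(y,\tau):=s^{-1/2}U(s^{1/2}y,T-s),\qquad \rho(\tau):=\tfrac{\varrho(T-s)}{2\sqrt s}=\tfrac{D+\frac{1}{2A}(s-T)}{2\sqrt s}.
\]
The relevant range is $s\ge\lambda$, i.e.\ $\tau\le\tau_*:=-\log\lambda$, and I would shift so that $\tau_*$ plays the role of $\tau_0=\tau_1=0$.

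I then check the hypotheses in turn.
\begin{itemize}
\item \eqref{hyp: bounds}: we have $|u|\le A s^{-1/2}=A\lambda^{-1/2}e^{(\tau-\tau_*)/2}$, which is at most $Ae^{\vare_0(\tau-\tau_*)}$, and $|u_y|=|U_x|\le\vare_0$.
\item \eqref{hyp: decay.cond}: $|u_{yy}|=\sqrt s\,|U_{xx}|\le\sqrt s\,\vare_0/\varrho=\vare_0/(2\rho)$, so $K_0=\vare_0<1$ is harmless.
\item Conditions on $\rho$: writing $\rho=\tfrac{1}{2}\big(D'\,s^{-1/2}+\tfrac{1}{2A}s^{1/2}\big)$ with $D':=D-T/(2A)\ge D$ (because $T\le\lambda$ is controlled), a direct differentiation gives
\[
\tfrac{\rho'}{\rho}=\tfrac{2AD'-s}{2(2AD'+s)}\in\big(-\tfrac12,\tfrac12\big),
\]
and this is $\le -\mu$ precisely when $s\ge\frac{1+2\mu}{1-2\mu}2AD'$. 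This explains why I centre at $T$ with a lag $\lambda$ comparable to $D$. It also means $T$ must be allowed slightly above $0$, or equivalently $\lambda$ must be chosen relative to $D'$, which depends on $T$. I would handle this by solving for $\lambda=8AD'$, say, which gives $\mu=\tfrac{3}{10}$ uniformly in $t_0$, and by restricting attention to $s\ge\lambda$.
\item \eqref{hyp: ini.cond}: at $s=\lambda$ we get $\rho\approx\tfrac{1}{2}\big(D'/\sqrt{8AD'}+\sqrt{8AD'}/(2A)\big)\gtrsim\sqrt{D/A}$, which exceeds $B(A,\mu)$ once $D\ge D_0(A)$.
\end{itemize}

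\textbf{Identification of $a$ and the pointwise bound.} Theorem~\ref{thm: general asymp line} yields $a$ with $\|e^{-\tau/2}\hat u-a\|^2_{\cH}\le(1+a^2)e^{-\rho^2/36}$. Since $e^{-\tau/2}u(y,\tau)=U(s^{1/2}y,T-s)$, and $U\to0$ in $C^\infty_{loc}$ together with $|U|<A$, the limit as $\tau\to-\infty$ forces $a=0$ (test against the Gaussian on $|y|\le1$, where $U\to0$ locally uniformly after the scale change, or use dominated convergence on the profile). At $\tau=\tau_*$ this gives $\int_{-1}^1|U(\sqrt\lambda y,t_0)|^2dy\le Ce^{-\rho^2/36}\le Ce^{-cD/A}$. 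Passing to $|U(0,t_0)|$ I would use the interpolation lemma with $\xi$ small: $U(\sqrt\lambda\,\cdot,t_0)$ has $y$-derivative bounded by $\vare_0$ and second derivative bounded by $\vare_0\sqrt\lambda/\varrho\lesssim 1$. Hence an $L^2$ bound $\eta$ on $[-1,1]$ gives an $L^\infty$ bound $\lesssim\eta^{2/3}$, which yields $|U(0,t_0)|\le e^{-\beta D}$ with $\beta=\beta(A)$ after enlarging $D_0$.

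\textbf{Main obstacle.} I expect the main difficulty to be the bookkeeping in the centring step: $\varrho$ is affine in $t$ with a fixed offset, so the rescaled $\rho$ fails the monotonicity \eqref{hyp: bdd.log.d.rho.1}--\eqref{hyp: bdd.log.d.rho.2} near the centre. The choice of $T$ has to make $\rho'/\rho\le-\mu$ with $\mu$ independent of $t_0$ and $\vare_0$ on the whole range $s\ge\lambda$, while keeping $\rho(\tau_*)\sim\sqrt D$ large and never using $U$ at times after $0$. If the issue that $T$ may exceed $0$ causes trouble, I would instead take $T=t_0+\lambda$ only for $t_0\le-\lambda$. For $t_0\in(-\lambda,0]$ I would shrink the parameter, applying the argument with $D/2$ in place of $D$ to the restricted domain $[-\varrho(t)+D/2,\varrho(t)-D/2]$, whose radius is still of the same affine form.
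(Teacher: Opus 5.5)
Your outline is correct and uses the same reduction as the paper. You rescale parabolically about a space-time point in the future of the slice you evaluate, and verify \eqref{hyp: bdd.log.d.rho.1}--\eqref{hyp: bdd.log.d.rho.2}, \eqref{hyp: bounds}, \eqref{hyp: decay.cond} with $K_0=\varepsilon_0\le 1$, so that $B$ does not depend on $\varepsilon_0$. You then apply Theorem~\ref{thm: general asymp line}, show $a=0$, and interpolate from $L^2$ to $L^\infty$.

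The real difference is the centre, and the paper's choice removes the obstacle you describe. The paper centres once at $(0,2AD)$, the time at which the affine radius $D+\tfrac{1}{2A}(-t)$ would vanish. That is, it sets $V(x,t)=U(x,t+2AD)$, whose graphical radius is exactly $|t|/(2A)$ for $t\le -2AD$. Then $\rho=e^{-\tau/2}/(4A)$ has the following properties:
\begin{itemize}
\item $\rho'/\rho\equiv-\tfrac12$, the optimal $\mu$;
\item $\rho(-\log(2AD))=\sqrt{D/(8A)}\ge B$ once $D\ge D_0=8AB^2$;
\item $\rho(\tau)^2\ge D/(8A)$ for all $\tau\le-\log(2AD)$, which is exactly the range $t\le 0$.
\end{itemize}
So a single application of the theorem covers every $t\le0$.

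Your $t_0$-dependent recentring also closes, but the bookkeeping needs fixing. Solving $\lambda=8AD'$ with $D'=D-(t_0+\lambda)/(2A)$ gives $\lambda=\tfrac45(2AD+|t_0|)$, not $c_AD$. It also gives $D'=\lambda/(8A)\ge D/5$, not $D'\ge D$, since $T$ can be positive. This still yields $\rho(\tau_*)^2=\tfrac{25}{32}D'/A\ge \tfrac{5D}{32A}$. A positive $T$ is harmless: you only use times $t=T-s\le t_0\le 0$, and the paper's centre $2AD$ is positive too. So your fallback with $D/2$ is unnecessary, and the recentring buys nothing beyond reading the estimate off one slice.

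Two details are wrong as written.
\begin{itemize}
\item \textbf{Derivative bounds.} We have $\partial_y[U(\sqrt\lambda\,y,t_0)]=\sqrt\lambda\,U_x$ and $\partial_y^2[U(\sqrt\lambda\,y,t_0)]=\lambda U_{xx}$. The bounds are therefore $\sqrt\lambda\,\varepsilon_0$ and $\lambda\varepsilon_0/\varrho(t_0)=\tfrac{8A}{5}\varepsilon_0$, not $\varepsilon_0$ and $\varepsilon_0\sqrt\lambda/\varrho$. This does no harm. These bounds, and the factor $\lambda$ that appears when you translate $\tau_*$ to $0$, are polynomial in $\lambda$, while $e^{-\rho(\tau_*)^2/36}=e^{-25\lambda/(9216A^2)}$ decays exponentially in $\lambda$.
\item \textbf{Why $a=0$.} Your justification does not work. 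The window $|y|\le1$ corresponds to $|x|\le\sqrt s\to\infty$, where $C^\infty_{\mathrm{loc}}$ convergence of $U$ says nothing. There is also no pointwise limit at fixed $y\neq0$ to which dominated convergence could apply. Instead, run your interpolation at $y=0$ for every $\tau\le\tau_*$. Since $\rho^2\ge s/(16A^2)$ and $|\partial_y(e^{-\tau/2}u)|=\sqrt s\,|U_x|\le\varepsilon_0\sqrt s$, the $\cH$-estimate gives $|U(0,T-s)-a|\to0$ as $s\to\infty$. Then $U(0,t)\to0$ forces $a=0$. The paper's one-line ``since $V\to0$ in $C^\infty_{\mathrm{loc}}$'' tacitly relies on the same step.
\end{itemize}
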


\begin{proof}
    By shifting in time, we consider 
    \begin{equation*}
        V(x,t) := U(x, t + 2AD):\big[-\frac{1}{2A}\lvert t\rvert, \frac{1}{2A}\lvert t\rvert\big]\times (-\infty,-2AD]
    \end{equation*}
    and the rescaled flow 
    \begin{equation*}
        v(y,\tau)=e^{\tau/2}U(e^{-\tau/2} y,-e^{-\tau}) \quad \text{for} \quad \lvert y\rvert \leq e^{-\tau/2}/(2A),\quad \tau \leq -\log (2AD).
    \end{equation*}
 
    Choose the graphical radius for rescaled flow to be $\rho=e^{-\tau/2}/(4A)$. 
    Converting the estimates in the assumption, we find that $v$ satisfies \eqref{hyp: bounds} and \eqref{hyp: decay.cond} with $K_0 = 1$. 
    Clearly, $\rho$ satisfies \eqref{hyp: bdd.log.d.rho.1} and \eqref{hyp: bdd.log.d.rho.2} with (optimal) $\mu = \tfrac12$.
    Choose $D_0 = 8AB^2$, where $B = B(A,K_0, \mu)$ with $K_0 = 1$, $\mu = \tfrac12$ is stated in \eqref{hyp: ini.cond}.
    By Theorem~\ref{thm: general asymp line}, there exist $a\in \R$ and a numerical constant $C$ such that for all $\tau \leq -\log (2AD)$
    \begin{equation*}
        \|e^{-\tau/2}v(\cdot,\tau) - a\|^2_{L^2([-10,10])} \leq C(1+ \lvert a \rvert)e^{- \rho^2/49}.
    \end{equation*}
    Since $V\to 0$ in $C^\infty_{loc}$, we have $a = 0$.
    By slightly modifying the last paragraph in the proof of Theorem~\ref{thm: general asymp line}, there exist numerical constants $C$ and $\beta$ such that for all $\tau \leq -\log (2AL)$
    \begin{equation*}
        \|e^{-\tau/2}v(\cdot,\tau) \|^2_{L^\infty([-5,5])} \leq Ce^{-\beta \rho(\tau)^2} \leq C e^{-\frac{\beta D}{8A}}.
    \end{equation*}
    After translating it back to the original coordinates and reselecting a smaller $\beta$ and a larger $D_0$ that depend only on $A$, we obtain the desired estimate.
\end{proof}

\begin{theorem}[quantitative height estimate]\label{thm: exp.conv.away.tip}
    Let $\cM$ be as stated in Theorem~\ref{thm: sharp asympt line} and let $A = 1 + \max_i \: \lvert a_i\rvert$.
    Let $\Sigma_t$ be an edge of a right-pointing finger $\Gamma_t$ of $M_t$.
    Suppose that $\Sigma_t$ converges to the line $\{x^2 = a_i\}$ in $C^\infty_{\mathrm{loc}}$ as $t\to -\infty$.
    Let $\vare>0$ be sufficiently small, and let $t_{\vare}$, $L = 100\vare^{-\frac12}A$, and $U(x,t)$ be as stated in Lemma~\ref{lem: direction of vertex}, Proposition~\ref{prop: ang.away.tip} and ~\ref{prop: curv.away.tip}.
    There exist $D_0>0$ and $\beta \in (0, \tfrac{1}{100})$ depending only on $A$ with the following significance:
    For any $t_0\leq t_{\vare}$,
    \begin{description}
        \item[(A)] if $\Sigma_t$ is a compact edge with vertices $v_i(t)= (v_i^1(t), v_i^2(t))$, $i = 1,2$, with $v_1^1(t) < v_2^1(t)$, and $x\in \R$ satisfies 
        \begin{equation*}
            v_1^1(t_0) + L + 2D_0\leq x \leq v_2^1(t_0) - L - 2D_0,
        \end{equation*}
        then for all $t\leq t_0$, $\Sigma_t$ satisfies
        \begin{equation*}
            \lvert U(x,t) - a_i\rvert < \exp\big(-\beta(\min_{i=1,2}\, \lvert x-v_i^1(t_0)\rvert - L)\big).
        \end{equation*}
        \item[(B)] if $\Sigma_t$ is a noncompact edge with only one vertex $v(t) = (v^1(t), v^2(t))$ and $x\in \R$ satisfies 
        $$x < v^1(t_0) -  L - 2D_0,$$
        then for all $t\leq t_0$, $\Sigma_t$ satisfies
        \begin{equation*}
            \lvert U(x,t) - a_i\rvert < \exp\big(-\beta(\lvert x - v^1(t_0)\rvert - L)\big).
        \end{equation*}
    \end{description}
\end{theorem}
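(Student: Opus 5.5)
The plan is to reduce Theorem~\ref{thm: exp.conv.away.tip} to Theorem~\ref{thm: quan.conv}. For a fixed point $x$ we translate space and time so that $(x,t_0)$ becomes the origin and the asymptote $\{x^2=a_i\}$ becomes the $x^1$-axis. Concretely, set
\[
\tilde U(y,t') := U(x+y,\,t_0+t') - a_i, \qquad t'\le 0 .
\]
By Corollary~\ref{cor: sharp asympt line}, $\tilde U\to 0$ in $C^\infty_{\mathrm{loc}}$ as $t'\to-\infty$. By Proposition~\ref{prop: solution in strip}, $|\tilde U|<2A$, so Theorem~\ref{thm: quan.conv} will be applied with $2A$ in place of $A$. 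The quantity playing the role of $D$ is
\[
D := \min_{i=1,2}|x-v_i^1(t_0)| - L
\]
in case (A), and $D := |x-v^1(t_0)|-L$ in case (B). The hypothesis $D\ge 2D_0$ is exactly the assumed range of $x$, so the conclusion $|\tilde U(0,t')|\le e^{-\beta D}$ for all $t'\le0$ is the desired estimate, up to adjusting $\beta$ and $D_0$ (which depend only on $A$).

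The key step is to verify that $\Sigma_{t_0+t'}$ is a graph over $[x-\varrho(t'),x+\varrho(t')]$ with the required bounds, where $\varrho(t') = D/2 + \tfrac{1}{4A}(-t')$; one can also use $D$ itself with a slightly smaller slope. For this we need the vertices to recede fast enough as $t'$ decreases. Lemma~\ref{lem: direction of vertex} gives $\partial_t v^1\le -\tfrac{3}{2A}$ for the right-pointing vertex, so that $v_2^1(t_0+t')\ge v_2^1(t_0)+\tfrac{3}{2A}|t'|$. For the left vertex $v_1$, which belongs to the adjacent left-pointing finger, the reflected statement gives $v_1^1(t_0+t')\le v_1^1(t_0)-\tfrac{3}{2A}|t'|$. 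Hence the distance from any point of $[x-\varrho,x+\varrho]$ to either vertex is at least $L+D/2+\tfrac{1}{A}|t'|$. For this to be a valid lower bound we must also check that the curvature scale $|\kappa(v_i)|^{-1}\le 2A$ from \eqref{eq: curv.scale}, so that $L$ dominates $50\vare^{-1/2}|\kappa|^{-1}$.

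With the distance bound in hand, the remaining hypotheses follow from earlier results. Proposition~\ref{prop: ang.away.tip} gives graphicality and $|U_x|\le4\vare$, which we take as $\vare_0$. Proposition~\ref{prop: curv.away.tip}, applied with distance parameter $D/2+\tfrac1A|t'|\ge\varrho(t')$, gives $|\kappa|\le 4\vare/\varrho$, and therefore $|U_{xx}|\le (1+16\vare^2)^{3/2}\,4\vare/\varrho\le \vare_0/\varrho$ after adjusting constants. Case (B) is identical but has only one vertex to track.

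The main obstacle is handling the mismatch between the radius function of Theorem~\ref{thm: quan.conv}, namely $D+\tfrac{1}{2A}(-t)$, and the actual speed of the vertices, while also ensuring that the constants $D_0,\beta$ stay independent of $\vare$. The plan is to absorb factors of $2$ by replacing $D$ with $D/2$ and $A$ with $2A$ in the application of Theorem~\ref{thm: quan.conv}. Since the slope $\tfrac{1}{4A}$ is well below the vertex speed $\tfrac{3}{2A}$, there is room to spare. The price of these substitutions is only a change of $\beta$ to $\beta/2$.
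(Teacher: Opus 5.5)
Your proposal is correct and essentially follows the paper's proof. Like the paper, you recenter at $(x,t_0)$ and choose a window radius $\varrho$ of size $\tfrac12(\min_i|x-v_i^1(t_0)|-L)$ plus a linear term whose slope is below the vertex speed $\tfrac{3}{2A}$ from Lemma~\ref{lem: direction of vertex}, so the window stays at distance at least $L+\varrho$ from the vertices. You then feed Propositions~\ref{prop: solution in strip}, \ref{prop: ang.away.tip} and \ref{prop: curv.away.tip} into Theorem~\ref{thm: quan.conv} and halve $\beta$. Your bookkeeping is, if anything, slightly more careful than the paper's: you note that $|U-a_i|<2A$, so Theorem~\ref{thm: quan.conv} is applied with $2A$, and you take $\vare_0$ somewhat larger than $4\vare$ to absorb the factor $(1+U_x^2)^{3/2}$.
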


\begin{proof}
    The proofs of cases (A) and (B) are similar. We will only present the proof for case (A).
    Let $\vare\in (0, 10^{-3})$ be sufficiently small to be determined, let $D_0$ to be as stated in Theorem~\ref{thm: quan.conv}, and fix $t_0 \leq t_{\vare}$ and $x$ in the stated domain.
    Define
    \begin{equation*}
        \varrho(t) = \tfrac12 \big( \min_{i=1,2}\, \lvert x-v_i^1(t_0)\rvert - L\big) + \tfrac{1}{2A}(t_0 - t).
    \end{equation*}
    By assumption, $\varrho(t_0) \geq D_0$. 
    In light of Lemma~\ref{lem: direction of vertex}, for all $t \leq t_0$, $i = 1,2$,
    \begin{equation*}
        \lvert x-v_i^1(t)\rvert \geq \lvert x-v_i^1(t_0)\rvert + \tfrac{3}{2A}(t_0-t) \geq 2\varrho(t) +L.
    \end{equation*}
    Therefore, for all $t\leq t_{\vare}$, by Proposition~\ref{prop: solution in strip} and ~\ref{prop: ang.away.tip}, $\Sigma_t \cap \{(x^1, x^2) \in \R^2:\lvert x^1 - x \rvert \leq \varrho(t)\}$ is a graph of $U(x, t)$ with $\|U(\cdot,t)\|_{L^\infty([x-\varrho(t), x+ \varrho(t)])} < A$  and $\|U_x(\cdot,t)\|_{L^\infty([x-\varrho(t), x+ \varrho(t)])} < 4\vare$.
    Furthermore, by Proposition~\ref{prop: curv.away.tip}, $\|U_{xx}(\cdot,t)\|_{L^\infty([x-\varrho(t), x+ \varrho(t)])} < 8\vare/\varrho(t)$.
    Up to a proper translation in spacetime, Theorem~\ref{thm: quan.conv} implies that there exists $\beta>0$ depending only on $A$ such that for all $t\leq t_0$
    \begin{equation*}
        \lvert U(x, t) - a_i\rvert < \exp\big(-\tfrac{\beta}{2} \big( \min_{i=1,2}\, \lvert x-v_i^1(t_0)\rvert - L\big)\big).
    \end{equation*}
    Reselecting a smaller $\beta$ then gives the desired estimate.
\end{proof}

\subsection{Nondegeneracy of finger}

\begin{theorem}\label{thm: non-degenerate finger}
    Each finger is non-degenerate at $t = -\infty$; namely, the two horizontal asymptotes of each finger are separated by a positive distance.
\end{theorem}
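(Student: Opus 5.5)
We argue by contradiction. Suppose a right-pointing (extended) finger $\Gamma_t$ has lower edge $\Sigma_t$ and upper edge $\Sigma'_t$ converging to the same line, i.e.\ $a_i = a_j$. By Proposition~\ref{prop: vertex optimal speed}, $\lvert\kappa(v(t))\rvert\to+\infty$ as $t\to-\infty$. Lemma~\ref{lem: direction of vertex} then shows that the horizontal speed $-\tfrac{\pa}{\pa t}\langle v(t),\be_1\rangle$ is bounded below by $(1-4\vare)\pi/(2\vare)$ for every small $\vare$. Hence it also tends to $+\infty$. In particular $v^1(t)\to+\infty$ superlinearly as $t\to-\infty$. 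The plan is to show that such a fast-moving, asymptotically degenerate finger cannot carry enough "area" to produce this speed, contradicting the quantitative height estimate of Theorem~\ref{thm: exp.conv.away.tip}.

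\textbf{Step 1.} Fix $t_0\le t_\vare$ and consider the region enclosed between $\Sigma_t$ and $\Sigma'_t$ to the right of a fixed vertical line $x^1=p^1$. Call its area $\mathcal{A}(t)$. Under CSF, the rate of change of the area enclosed by the curve segment and the chord $\{x^1=p^1\}$ equals minus the turning angle, up to boundary flux terms. The turning angle along $\Gamma_t\cap\{x^1\ge p^1\}$ is $\approx\pi$, because $\theta$ goes from $\approx 0$ to $\approx\pi$. The flux across $x^1=p^1$ is controlled by $U_t=U_{xx}/(1+U_x^2)$ on both sheets, which is small by Proposition~\ref{prop: curv.away.tip}. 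Thus $\tfrac{d}{dt}\mathcal{A}(t)\le -\pi+o(1)$, and so $\mathcal{A}(t)\ge (\pi-o(1))(t_0-t)$ for $t\le t_0$.

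\textbf{Step 2.} Bound $\mathcal{A}(t)$ from above using the degeneracy. Theorem~\ref{thm: exp.conv.away.tip} gives $\lvert U(x,t)-a_i\rvert,\lvert U'(x,t)-a_i\rvert\le \exp(-\beta(\lvert x-v^1(t_0)\rvert-L))$ for all $t\le t_0$ and $x$ in the admissible range. So the vertical width of the finger at $x$ is at most $2\exp(-\beta(\lvert x - v^1(t_0)\rvert-L))$, uniformly in $t\le t_0$, away from the vertex region of size $L+2D_0$. The part near the vertex, of length $O(L+D_0)$, lies in the strip of width $\le 2A$ by Proposition~\ref{prop: solution in strip}. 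A cruder bound uses width $\lvert\kappa(v)\rvert^{-1}\pi(1+\vare)$ from the grim-reaper closeness of \cite[Theorem 1.4]{CSSZ24}. If the finger is compact on the left (case (A)), the other vertex region must be handled the same way.

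Integrating in $x$ from $p^1$ to $v^1(t)$ then gives $\mathcal{A}(t)\le C(A,\vare)$ plus the contribution of the segment $[v^1(t_0),v^1(t)]$. \textbf{This is the main obstacle}: the exponential estimate is anchored at $v^1(t_0)$, while the vertex keeps moving. I would remove the anchoring by applying Theorem~\ref{thm: exp.conv.away.tip} with $t_0$ replaced by $t$ itself. This bounds the width at $x$ by $\exp(-\beta(v^1(t)-x-L))$ for $x\le v^1(t)-L-2D_0$, which is summable. Then $\mathcal{A}(t)\le C(A)(L+D_0+\beta^{-1})$, uniformly in $t$.

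\textbf{Step 3.} Step 2 bounds $\mathcal{A}(t)$ uniformly, while Step 1 forces $\mathcal{A}(t)\ge(\pi-o(1))(t_0-t)\to\infty$, a contradiction. Hence $a_i\ne a_j$. One must also check that $a_i<a_j$ in the ordering sense, so the horizontal asymptotes are genuinely separated. This follows because $\Gamma_t$ is right-pointing with $\theta\approx\pi$ on the upper edge, and because of the embeddedness of $M_t$.

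As an alternative to the area argument, I would run the same contradiction directly. Compare $\lvert\kappa(v(t))\rvert\to\infty$ with the fact that the inscribed grim reaper of width $\pi\lvert\kappa(v)\rvert^{-1}$ must sit inside a finger whose width near $v^1(t_0)$ is fixed and positive. Take $t_0$ fixed and $t\to-\infty$: for $t\le t_0$ the vertex passes over the point $x=v^1(t_0)+L+2D_0$. There the finger height stays uniformly controlled, but it is not forced to be small, so the area method seems more robust.
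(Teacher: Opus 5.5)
Your argument is correct, but it reaches the contradiction by a different mechanism than the paper.

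\textbf{The paper's route.} The paper also re-anchors Theorem~\ref{thm: exp.conv.away.tip} at the current time, but evaluates it at $x^1=\tfrac12 v^1(t)$. Since that bound holds for all $s\le t$, Proposition~\ref{prop: finger in strip} traps the entire tip portion $\Gamma_t\cap\{x^1\ge \tfrac12 v^1(t)\}$ in a strip of width $\lesssim e^{-\beta v^1(t)}$. The ball comparison of Proposition~\ref{prop: vertex optimal speed} upgrades this to $\lvert\kappa(v(t))\rvert\gtrsim e^{\beta v^1(t)}$. The grim-reaper tip velocity then gives $\tfrac{d}{dt}v^1\le -e^{\beta v^1}$, so $v^1$ escapes to infinity in finite backward time. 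In particular, the ``direct'' alternative you set aside at the end does work, once the estimate is anchored at $t$ and fed into finger-in-strip.

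\textbf{Your route.} You combine the area identity $\tfrac{d}{dt}\mathcal{A}=-\int_{\Gamma_t\cap\{x^1\ge p^1\}}\kappa\,ds\approx-\pi$ (the computation of Proposition~\ref{prop: area asymp}) with a uniform upper bound on $\mathcal{A}(t)$. That bound comes from exponential closeness of both edges to the same line, away from a vertex neighbourhood of fixed horizontal length $L+2D_0+O(1)$. On that neighbourhood the crude strip bound of Proposition~\ref{prop: solution in strip} suffices.

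\textbf{Comparison.} Your route needs no thinness at the tip, no ball comparison and no ODE for $v^1$. The price is checking two things, both available:
\begin{itemize}
\item both edges are graphs over $[p^1,v^1(t)-L-2D_0]$, by Proposition~\ref{prop: ang.away.tip};
\item $\Gamma_t$ does not extend far to the right of $v^1(t)$, by grim-reaper closeness at the vertex.
\end{itemize}
The paper's argument is more local and quantitative. Yours is softer but arguably more robust. Both rest equally on Theorem~\ref{thm: exp.conv.away.tip}. Its proof never uses $a_i\neq a_j$; it needs only the lower bound $\tfrac{3}{2A}$ on the vertex speed, so it is valid in the degenerate case.

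\textbf{Minor corrections.} In Step 1, the fixed chord contributes no flux. The $o(1)$ error in $\tfrac{d}{dt}\mathcal{A}\le-\pi+o(1)$ comes from the tangent angles at the two points where $\Gamma_t$ crosses $x^1=p^1$, not from $U_t$. Those angles are small by Proposition~\ref{prop: ang.away.tip}. The opening remarks about $\lvert\kappa(v)\rvert\to\infty$ and superlinear growth of $v^1$ are not needed. Neither is the ordering check in Step 3.
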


\begin{remark}
In fact, we will prove a stronger theorem that each horizontal asymptote in Theorem~\ref{thm: sharp asympt line} has multiplicity one; namely, $a_i\neq a_{j}$ whenever $i\neq j$.
\end{remark}

\begin{proof}
    For the purpose of contradiction, we assume that there exists a degenerate (extended) finger $\Gamma_t$ of $M_t$ with multiplicity-two horizontal limiting line $x^2=a$. 
    For the sake of brevity, by translation, we assume that $a = 0$; by rotation and reflection, we assume the finger $\Gamma_t$ is right-pointing.
    Let $v(t) = (v^1(t), v^2(t))$ denote the unique sharp vertex of $\Gamma_t$.
    Let $\vare>0$ be sufficiently small, and let $t_{\vare}\ll -1$ and $L$ be as stated in Theorem~\ref{thm: exp.conv.away.tip}.
    For any $t\leq t_{\vare}$, applying Theorem~\ref{thm: exp.conv.away.tip} with $p^1 = \tfrac{1}{2}v^1(t) \gg L$ and a slightly smaller $\beta$, together with Proposition~\ref{prop: finger in strip}, we obtain
    \begin{equation*}
        \Gamma_t\cap \big\{(x^1, x^2)\in \R^2\::\:x^1\geq \tfrac{1}{2}v^1(t)\big\} \subset \big\{(x^1, x^2)\in \R^2\::\: \lvert x^2\rvert \leq e^{-\beta v^1(t)}\big\}.
    \end{equation*}
    Here, we absorb a factor $\tfrac12$ by reselecting $\beta >0$ for simplicity.
    Using the ball comparison argument in the proof of Proposition~\ref{prop: vertex optimal speed}, we improve the curvature lower bound: for $t\leq t_{\vare}$,
    \begin{equation*}
        \lvert\kappa(v(t))\rvert \geq  (1-2\vare)\frac{\pi}{2} e^{\beta v^1(t)}.
    \end{equation*}
    
    By Lemma~\ref{lem: direction of vertex} and modifying the last paragraph of its proof, for $t\leq t_{\vare}$,
    \begin{equation*}
        \tfrac{\pa}{\pa t}v^1(t) \leq (1- \tfrac{\vare}{100})\langle \kappa(v(t))\bn, \be_1\rangle \leq -(1-\vare)(1-2\vare)\frac{\pi}{2}  e^{\beta v^1(t)} \leq -e^{\beta v^1(t)},
    \end{equation*}
    and hence
    \begin{equation}\label{eq: super.fast.tip}
        \tfrac{\pa}{\pa t}e^{-\beta v^1(t)}= -\beta e^{-\beta v^1(t)} \tfrac{\pa}{\pa t}v^1(t) \geq \beta.
    \end{equation}
    Let $t_0\leq t_{\vare}$ be fixed. Then for any $t< t_0$, integrating \eqref{eq: super.fast.tip} over the interval $[t, t_0]$ yields
    \begin{equation*}
        v^1(t) \geq -\tfrac{1}{\beta}\log\big( e^{-\beta v^1(t_0)} - \beta(t_0 - t)\big).
    \end{equation*}
    Observe that since $\beta > 0$ and $v^1(t_0) \gg 1$, the argument of the logarithm on the right-hand side vanishes after a finite decrease in $t$. 
    Consequently, $v^1(t)$ diverges to infinity in finite backward time. This implies that the finger $\Gamma_t$ becomes disconnected or folded, contradicting the connectedness or embeddedness assumption of $M_t$, respectively.
\end{proof}

\bigskip
\bigskip

\subsection{Sharp tip asymptotics}

In this section, we show that each finger converges to a translating grim reaper. 
Without loss of generality, we assume that $\Gamma_t$ is a right-pointing finger with sharp vertex $v(t)$, which is asymptotic to lines $x^2 = a_i$ and $x^2 = a_j$ as $t \to -\infty$ with strict inequality $a_i < a_j$ due to Theorem~\ref{thm: non-degenerate finger}.

\bigskip

\begin{proposition}\label{prop: tip_local_convergence}
For all sufficiently small $\vare > 0$, there exists $t_{\vare}\ll -1$ such that for all $t\leq t_{\vare}$, $\Gamma_t  \cap B_{1/\vare}(v(t))$ is $\vare$-close to the grim reaper curve bounded between the asymptotes $x^2=a_i$ and $x^2 = a_j$ with tip at $v(t)$ in $C^k$-topology.
\end{proposition}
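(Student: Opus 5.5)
The plan is to pin down the scale of the model grim reaper, i.e.\ to prove $\lim_{t\to-\infty}\lvert\kappa(v(t))\rvert = \pi/(a_j-a_i)$. Once this holds, \cite[Theorem 1.4]{CSSZ24} upgrades it to $C^k$ closeness to the grim reaper of width $a_j-a_i$ with tip at $v(t)$. That theorem already says $\cD_{\lvert\kappa(v(t))\rvert}(\cG - v(t))$ is $\vare$-close in $C^k$ on $P(0,1/\vare)$ to the unit-speed grim reaper. Lemma~\ref{lem: direction of vertex} fixes the axis direction up to an angle $\vare$. Proposition~\ref{prop: vertex optimal speed} gives $\lvert\kappa(v(t))\rvert^{-1}\le 2A$, so the rescaling factor is bounded and the closeness transfers to the unrescaled ball $B_{1/\vare}(v(t))$, after shrinking $\vare$.

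\emph{Lower bound for the width.} Proposition~\ref{prop: vertex optimal speed} already gives $\liminf\lvert\kappa(v(t))\rvert\ge \pi/(a_j-a_i)$, which says the width $W(t):=\pi\lvert\kappa(v(t))\rvert^{-1}$ satisfies $\limsup W\le a_j-a_i$.

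\emph{Upper bound for the width.} I need $\liminf W\ge a_j-a_i$.
\begin{itemize}
\item Use Theorem~\ref{thm: exp.conv.away.tip}: at $x^1=v^1(t)-L-2D_0-D$ the lower and upper edges lie within $e^{-\beta D}$ of $a_i$ and $a_j$.
\item Near the tip, on the scale $100\vare^{-1}\lvert\kappa(v(t))\rvert^{-1}\le 200\vare^{-1}A$, the finger is close to a grim reaper of width $W(t)$, whose arms approach horizontal lines separated by $W(t)$ up to errors $O(\vare)$.
\item In the intermediate region $x^1\in[v^1(t)-L-2D_0-D,\; v^1(t)-50\vare^{-1/2}\lvert\kappa\rvert^{-1}]$, Propositions~\ref{prop: ang.away.tip} and \ref{prop: curv.away.tip} give $\lvert\theta\rvert\le 2\vare$ and $\lvert\kappa\rvert\le 4\vare/D'$.
\end{itemize}
This intermediate control is too crude to bound the height change directly, because the length is of order $\vare^{-1/2}$, which gives a change of order $\vare^{1/2}$. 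That is acceptable since it still tends to $0$ as $\vare\to0$.

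So I would compare heights as follows. The two edges at the grim-reaper boundary are separated by $W(t)+O(\vare)$. Across the intermediate strip of length $O(\vare^{-1/2}A + D_0)$ with slope $\le 4\vare$, each height changes by at most $O(\vare^{1/2}A+\vare D_0)$. At the far end the separation is $a_j-a_i\pm 2e^{-\beta D}$. Hence $\lvert W(t)-(a_j-a_i)\rvert\le C(A)\vare^{1/2}+2e^{-\beta D}$, and letting $\vare\to0$ gives the limit.

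\emph{Main obstacle.} The hardest step is ensuring that $D_0$ and $L$ in Theorem~\ref{thm: exp.conv.away.tip} do not spoil the estimate. $L=100\vare^{-1/2}A$ grows as $\vare\to0$, so the slope bound $2\vare$ over that length only yields $O(\vare^{1/2})$. This is fine, but it requires $\beta$ and $D_0$ to depend only on $A$, which Theorem~\ref{thm: exp.conv.away.tip} guarantees.

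As a backup, I would instead use a barrier argument. A translating grim reaper of width $a_j-a_i-\eta$, placed inside the finger region for very negative times, cannot be overtaken, because its speed $\pi/(a_j-a_i-\eta)$ exceeds the limiting tip speed. This would contradict $\limsup\lvert\kappa\rvert>\pi/(a_j-a_i)$ via the vertex speed formula in Lemma~\ref{lem: direction of vertex}.
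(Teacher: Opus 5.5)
Your argument is correct and is essentially the paper's proof. The paper likewise uses Theorem~\ref{thm: exp.conv.away.tip} to place the two edges within $\delta$ of $a_i,a_j$ at $p^1=v^1(t)-100\delta^{-1/2}A-2D$. It transports this to $q^1=v^1(t)-50\delta^{-1/2}\lvert\kappa(v(t))\rvert^{-1}$ via the slope bound $4\delta$ from Proposition~\ref{prop: ang.away.tip}, compares with the grim-reaper separation $(1+\delta)\pi\lvert\kappa(v(t))\rvert^{-1}$ there, and combines with Proposition~\ref{prop: vertex optimal speed} for the reverse inequality. One bookkeeping slip: your intermediate strip has length $O(\vare^{-1/2}A+D_0+D)$, not $O(\vare^{-1/2}A+D_0)$, so the height change carries an extra $4\vare D$ term. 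This is harmless if you let $\vare\to0$ before $D\to\infty$, or, as the paper does, set $D=-\beta^{-1}\log\vare$.
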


\begin{proof}
Let $0< \delta\ll \vare\ll \lvert a_i - a_j\rvert$.
By Lemma~\ref{lem: direction of vertex} and its proof, for all sufficiently small $\delta>0$ there exists $T_\delta \ll -1$ such that for all $t\leq T_\delta$, $(\Gamma_t - v(t))\cap B(0, 100\delta^{-1}\lvert \kappa(v(t))\rvert^{-1})$ is $\tfrac{\delta}{100}$-close in curvature-scale-weighted $C^{\lfloor 100/\delta\rfloor}$-topology to a grim reaper curve of width $\pi\lvert \kappa(v(t))\rvert^{-1}$ with a small tilted angle ($<\delta$) between the axis of symmetry and the $x^1$-axis.

Let $q^1 = v^1(t) - 50 \delta^{-\frac12}\lvert \kappa(v(t))\rvert^{-1}$.
By Proposition~\ref{prop: ang.away.tip}, the lower and upper edges of $\Gamma_t \cap \{(x^1,x^2)\in \R^2\::\:0\leq x^1\leq q^1\}$ can be expressed as the graphs of $U^1(x,t) < U^2(x,t)$ for $0\leq x\leq q^1$, respectively, with $\lvert U_x^i\rvert\leq 4\delta$, $i = 1,2$. 
Furthermore, by the asymptotics to grim reaper curve in the previous paragraph, 
\begin{equation}\label{eq: D.U.ub}
    \lvert U^1(q^1,t) - U^2(q^1,t)\rvert \leq \sec \delta\cdot(\pi + \tfrac{\delta}{25}) \lvert \kappa(v(t))\rvert^{-1} \leq (1+ \delta) \pi \lvert \kappa(v(t))\rvert^{-1}.
\end{equation}

Freeze $t\leq T_\delta$. 
Let $A$, $D_0$, and $\beta$ be stated as in Theorem~\ref{thm: exp.conv.away.tip} with $\delta$ in place of $\vare$. 
Take $D \geq D_0$ such that $e^{-\beta D} \leq \delta$. 
Since $D_0$ is a numeric constant, by taking $\delta$ sufficiently small, we may simply assume that $D = -\beta^{-1}\log \delta$. 
Then, by Theorem~\ref{thm: exp.conv.away.tip}, for $p^1 = v^1(t) - 100\delta^{-\frac12}A- 2D$, 
\begin{equation}\label{eq: U.p}
    \lvert U^1(p^1, t) - a_i\rvert \leq \delta, \quad \lvert U^2(p^1,t) - a_j\rvert \leq \delta.
\end{equation}
Additionally, by taking $\delta$ sufficiently small so that $\rvert \log \delta \rvert \delta^{\frac12}\ll \beta A$, it follows from the derivative estimate that
\begin{equation}\label{eq: U.q}
    \lvert U^i(p^1, t) - U^i(q^1,t)\rvert \leq  \lvert p^1 - q^1\rvert \cdot \lVert U_x^i\rVert_{L^\infty([p^1,q^1])} \leq (2D+50\delta^{-\frac12}A)\cdot(4\delta) \leq 201 \delta^{\frac12}.
\end{equation}
Combining \eqref{eq: U.p} and \eqref{eq: U.q} yields that with $\delta\leq A\delta^{\frac12}$
\begin{equation}\label{eq: D.U.lb}
    \lvert U^1(q^1, t) -U^2(q^1,t)\rvert \geq \lvert a_i - a_j\rvert -  500 A\delta^{\frac12}.
\end{equation}
Recalling $\lvert a_i - a_j \rvert >0$, by taking $0<\delta \ll \vare \ll \lvert a_i - a_j \rvert$, combining \eqref{eq: D.U.ub} and \eqref{eq: D.U.lb} gives
\begin{equation*}
    \pi\lvert \kappa(v(t))\rvert^{-1} \geq (1- \tfrac{\vare}{100}) \lvert a_i - a_j\rvert.
\end{equation*}
Finally, putting this estimate for width back into the asymptotics to grim reaper in the first paragraph yields the desired result.

\end{proof}

\bigskip

Let $F(t)$ denote the \emph{finger region} in the right half-plane $\R_+\times \R$ enclosed by $\Gamma_t$ and the $x^2$-axis, and let $\lvert F(t)\rvert$ denote its area.
\begin{proposition}[area of finger region]\label{prop: area asymp}
    There exist constants $C_0\in \R$ and $\beta>0$ such that as $t \ll -1$, the area of $F(t)$ satisfies
    \begin{equation}\label{eq: asymp.expansion.1}
        \lvert F(t)\rvert = -\pi t + C_0  + O(e^{\beta t}).
    \end{equation}
\end{proposition}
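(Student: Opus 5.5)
We will differentiate the area of the finger region in time and use the Gauss–Bonnet type identity for curve shortening flow. The region $F(t)$ is bounded by $\Gamma_t\cap\{x^1\geq 0\}$ and the segment of the $x^2$-axis between the two intersection points $p_\pm(t)=(0,p^2_\pm(t))$. Since the normal velocity of $\Gamma_t$ is $\kappa$ and the vertical side is stationary, we get
\begin{equation*}
    \tfrac{d}{dt}\lvert F(t)\rvert = -\int_{\Gamma_t\cap\{x^1\geq 0\}} \kappa\, ds = -\big(\theta(p_+(t),t)-\theta(p_-(t),t)\big),
\end{equation*}
using $\theta_s=\kappa$ (the sign is fixed by the right-pointing orientation, which makes $\theta$ increase from about $0$ to about $\pi$). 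The boundary movement of the endpoints $p_\pm$ along the $x^2$-axis contributes nothing, because the region is cut by a fixed line. The key point is therefore to show that the total turning of $\Gamma_t$ in $\{x^1\geq0\}$ equals $\pi+O(e^{\beta t})$.

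To control the turning angle, we note that $\theta(p_-(t),t)$ and $\theta(p_+(t),t)-\pi$ are the slopes (angles) of the lower and upper edges at $x^1=0$. We will bound them exponentially in $t$.

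\textbf{Step 1.} By Lemma~\ref{lem: direction of vertex}, $v^1(t)\geq \tfrac{3}{2A}\lvert t\rvert - C$ for $t\ll-1$. Theorem~\ref{thm: exp.conv.away.tip} (case (A) or (B)), applied at $x=0$, then gives
\begin{equation*}
    \lvert U^1(x,t)-a_i\rvert+\lvert U^2(x,t)-a_j\rvert\leq e^{-\beta' \lvert t\rvert}
\end{equation*}
on a whole interval around $0$ of length comparable to $\lvert t\rvert$. The left vertex, in case (A), lies at comparable distance $\gtrsim\lvert t\rvert$ by the same lemma applied to the neighbouring finger.

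\textbf{Step 2.} We upgrade this height estimate to a gradient estimate. For that we combine \eqref{eq: interp.ineq.1} with the curvature bound $\lvert\kappa\rvert\leq 4\vare/D$ of Proposition~\ref{prop: curv.away.tip}, or, better, with Shi-type interior estimates for the graph $U-a_i$, which is exponentially small on a large parabolic neighbourhood. This yields $\lvert U^i_x(0,t)\rvert\leq Ce^{-\beta\lvert t\rvert}$. Consequently
\begin{equation*}
    \tfrac{d}{dt}\lvert F(t)\rvert=-\pi+O(e^{\beta t}).
\end{equation*}

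\textbf{Step 3.} Integrating gives the result. Since the error is integrable on $(-\infty,t_0]$, the quantity $\lvert F(t)\rvert+\pi t$ has a limit $C_0$ as $t\to-\infty$. Its difference from $C_0$ is $\int_{-\infty}^t O(e^{\beta s})\,ds=O(e^{\beta t})$, which is \eqref{eq: asymp.expansion.1}. We also need $\lvert F(t)\rvert$ to be finite and differentiable; this follows from compactness of $\Gamma_t\cap\{x^1\geq0\}$, because the finger is bounded to the right by its vertex.

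The main obstacle is Step 2. Theorem~\ref{thm: exp.conv.away.tip} gives only a pointwise $C^0$ estimate, and the derivative bound must be exponentially small, not merely $O(\vare)$. We will first try applying the interpolation inequality \eqref{eq: interp.ineq.1} with $\xi=e^{-\beta\lvert t\rvert/2}$, using the bounded curvature $\lvert U_{xx}\rvert\leq C$. This gives $\lvert U_x\rvert\leq Ce^{-\beta\lvert t\rvert/2}$, which suffices after renaming $\beta$.
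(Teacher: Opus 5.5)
Your proposal is correct and follows essentially the same route as the paper. The first variation formula $\tfrac{d}{dt}\lvert F(t)\rvert=-\int\kappa\,ds$ reduces the statement to showing that the turning angle between the two $x^2$-intercepts is $\pi+O(e^{\beta t})$. As in the paper, you get this from Theorem~\ref{thm: exp.conv.away.tip} near $x=0$, using that all vertices lie at distance at least $\tfrac{3}{2A}\lvert t\rvert-C$ by Lemma~\ref{lem: direction of vertex}, and then upgrade to an exponentially small gradient by interpolating against the uniform $C^2$ bound from Propositions~\ref{prop: ang.away.tip} and~\ref{prop: curv.away.tip}. Your Step 3, which defines $C_0$ as the limit of $\lvert F(t)\rvert+\pi t$ and bounds the tail integral, is a slightly cleaner write-up of the final integration than the paper's.
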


\begin{proof}
    We follow the setting in Subsection~\ref{sec: glob.geom}. 
    Let $\gamma(p(t), t)$ and $\gamma(q(t), t)$ denote the $x^2$-intercept of the lower and upper sheets of $\Gamma_t$, respectively. 
    By Lemma~\ref{lem: direction of vertex}, the $x^1$-coordinates of all vertices of $M_t$ are at least $\tfrac{3}{2A}\lvert t\rvert + O(1)$ away from 0 for $t\ll -1$.
    Then Theorem~\ref{thm: exp.conv.away.tip} implies that there exists $\beta\in (0, \tfrac{1}{100})$ such that the profile functions $U^1, U^2$ of lower and upper sheets of $\Gamma_t$ in a neighborhood around the origin satisfy for $t\ll -1$
    \begin{equation*}
        \lVert U^1(\cdot, t) - a_i\rVert_{L^\infty([-10,10])}< e^{\beta t},\quad  \lVert U^2(\cdot, t) - a_j\rVert_{L^\infty([-10,10])}  < e^{\beta t},
    \end{equation*}
    where we use the fact that $-3t/2A\gg L + 2D_0$ for $t\ll -1$ to simplify the exponential function and use $\beta$ to absorb $1/A$.
    Combining this with the standard interpolation inequality \cite[Lemma B.1]{CM15} and the uniform $C^2$-bound from Proposition~\ref{prop: ang.away.tip} and \ref{prop: curv.away.tip}, there exists (smaller) $\beta\in (0, \tfrac{1}{100})$ such that $\lVert U_x^i \rVert_{L^\infty([-5,5])} < e^{\beta t}$ for $i = 1,2$.
    We know that $\theta(p(t), t)$ converges to $0$ and $\theta(q(t),t)$ converges to $\pi$ as $t$ goes to $-\infty$.
    Therefore, for $t\ll -1$,
    \begin{equation}\label{eq: angle difference is pi}
        \theta(p(t), t) = O(e^{\beta t}), \quad \theta(q(t), t) = \pi + O(e^{\beta t}).
    \end{equation}

    By the first variational formula of area and the evolution equation of CSF, 
    \begin{equation*}
        \frac{d}{dt} \lvert F(t)\rvert  = \int_{\pa F(t)} (-\kappa) \, ds = -\int_{p(t)}^{q(t)} \kappa \, ds = -\pi +  O(e^{\beta t}),
    \end{equation*}
    Here, the boundary $\pa F(t)$ is oriented counterclockwise, and the portion on the $x^2$-axis has no contribution to the curvature integral. 
    Thus, choosing $t_0 \ll -1$, as $t\ll t_0$
    \begin{equation*}
        \lvert F(t) \rvert = \lvert F(t_0)\rvert -\int_{t}^{t_0}  \frac{d}{ds} \lvert F(s)\rvert \, ds = \lvert F(t_0)\rvert + \pi t_0 -  \pi t + O(e^{\beta t}),
    \end{equation*}
    and then the desired result follows.

\end{proof}

\subsection{Best-fitting grim reaper soliton}
The right-pointing translating grim reaper soliton, denoted by $\cG_{a_i, a_j;b} = \cup_{t\in \R}\{t\}\times\widehat\Gamma_{a_i, a_j; b}(t)$, bounded between the asymptotes $x^2 = a_i$ and $x^2=a_j$ can be parametrized by
\begin{equation}\label{eq: grim reaper}
    \widehat\Gamma_{a_i, a_j; b}(t):\quad  (x^1, x^2) = \big(\tfrac{1}{k} \log \cos\big(k z\big) - kt + b,\,  z + \bar a\big), \quad \mbox{for }\, -\tfrac{\pi}{2k}< z< \tfrac{\pi}{2k},
\end{equation}
where $\bar a = (a_i + a_j)/2$, $b$ is a \emph{shift constant}, and the positive constant $k = \pi \lvert a_i-a_j\rvert^{-1}$ represents the translation speed of the soliton.
We will suppress the dependence on $a_i, a_j$ in the notation of $\widehat{\Gamma}_b(t)$ for brevity in the following discussion.
Similarly, we denote by $\widehat{F}_b(t)$ the bounded region in the right half-plane enclosed by $\widehat{\Gamma}_b(t)$ and the $x^2$-axis for $t\ll-1$.

\begin{theorem}[best-fitting grim reaper]\label{thm: best.fitting.gr}
    There exists a unique $b\in \R$ such that $\lvert \hat F_b(t)\rvert$ satisfies the same asymptotic expansion for $\lvert F(t)\rvert$ in Proposition~\ref{prop: area asymp}: for $t\ll -1$
    \begin{equation}\label{eq: asymp.expansion.2}
        \lvert \widehat{F}_b(t)\rvert = -\pi t  + C_0 + O(e^{\beta t}).
    \end{equation}
    Moreover,
    \begin{equation*}
        \lim_{t\to -\infty}\lvert F(t) \triangle \widehat{F}_b(t) \rvert =0.
    \end{equation*}
    Here, $A \triangle B =  (A\setminus B)\cup (B\setminus A)$ denotes the symmetric difference of the sets $A$ and $B$.
\end{theorem}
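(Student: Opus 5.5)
We first compute $\lvert\widehat F_b(t)\rvert$ for the model soliton \eqref{eq: grim reaper}. Since $\widehat\Gamma_b(t)$ is the time-zero curve $\widehat\Gamma_0(0)$ translated by $(b-kt)\be_1$, the area between it and the $x^2$-axis is $(b-kt)\lvert a_j-a_i\rvert$ plus a correction. The correction is $\int_{-\pi/2k}^{\pi/2k}\tfrac1k\log\cos(kz)\,dz$, which is finite, minus the exponentially small area of the two thin tails of $\widehat\Gamma_b(t)$ lying in $\{x^1<0\}$. Using $k\lvert a_j-a_i\rvert=\pi$, this gives
\begin{equation*}
\lvert\widehat F_b(t)\rvert=-\pi t+b\lvert a_j-a_i\rvert+c(a_i,a_j)+O(e^{\beta t}).
\end{equation*}
Here the error term comes from the tail region near $x^1=0$, where $x^1=b-kt$ is of order $\lvert t\rvert$ and the width of the soliton is $O(e^{-k(b-kt)})$. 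Hence there is exactly one $b$ with $b\lvert a_j-a_i\rvert+c=C_0$, and this $b$ is well defined because $a_i\neq a_j$ by Theorem~\ref{thm: non-degenerate finger}. Comparing constants shows that it is also the unique $b$ for which \eqref{eq: asymp.expansion.2} holds. If needed, we shrink $\beta$ so that the same $\beta$ works for both expansions.

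For the symmetric difference, we split the right half-plane into three pieces: a far region $\{0\le x^1\le v^1(t)-R_\vare\}$, a tip region $\{x^1\ge v^1(t)-R_\vare\}$, and the corresponding pieces of the soliton. In the far region, Theorem~\ref{thm: exp.conv.away.tip} shows that the edges of $\Gamma_t$ lie within an exponentially small distance of $a_i$ and $a_j$. Integrating in $x^1$ bounds the area of $F(t)\triangle\{a_i<x^2<a_j\}$ there by $C\vare$ plus a term that is small uniformly in $t$. The same holds for $\widehat F_b(t)$, since grim reaper edges converge exponentially to their asymptotes. In the tip region, Proposition~\ref{prop: tip_local_convergence} shows that $\Gamma_t\cap B_{1/\vare}(v(t))$ is $\vare$-close to the grim reaper of the correct width with tip at $v(t)$. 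So it remains to compare the tip positions $v^1(t)$ and $b-kt$.

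The main step is to prove that $v^1(t)-(b-kt)\to0$, and we do this through the area. Write $F(t)\approx\{a_i<x^2<a_j,\ 0<x^1<v^1(t)\}$ minus the standard grim-reaper tip deficit, with an error $o(1)$ as $\vare\to0$ and $t\to-\infty$. The same decomposition of $\widehat F_b(t)$ has the same deficit but with $b-kt$ in place of $v^1(t)$. Subtracting the two expressions and using that $\lvert F(t)\rvert-\lvert\widehat F_b(t)\rvert=O(e^{\beta t})$ gives $\lvert a_j-a_i\rvert\,\lvert v^1(t)-(b-kt)\rvert\le o(1)$.

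We expect the obstacle to be the uniformity of the error in the tip region. Theorem~\ref{thm: exp.conv.away.tip} only controls points at distance $L+2D_0$ from the vertex, and $L$ depends on $\vare$. The overlap region must therefore be handled with Proposition~\ref{prop: tip_local_convergence} on a ball of radius $1/\vare\gg L+2D_0$, which makes the combined area error $O(\vare)+O(e^{-\beta/\vare})$. The vertical offset $v^2(t)\to\bar a$ follows from $\vare$-closeness to the grim reaper bounded by the correct asymptotes.

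Once the tips agree, the symmetric difference is bounded by $C\vare$: near the tip by closeness of the two curves on the ball, and away from it by the exponential convergence of both edges. Letting $\vare\to0$ gives $\lvert F(t)\triangle\widehat F_b(t)\rvert\to0$.
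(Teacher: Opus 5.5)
Your overall strategy is the paper's. You fix $b$ by matching the constant terms of the two area expansions; your explicit computation of $\lvert\widehat F_b(t)\rvert$ is a concrete version of the paper's remark that a horizontal shift by $c$ changes the area by $\lvert a_i-a_j\rvert c$. You then show that $F(t)$ is close in symmetric difference to the grim-reaper region with tip at $v^1(t)$, and use the two area expansions to force $v^1(t)-(b-kt)\to 0$. The paper writes your tip offset as a time shift $\mathcal{E}(t)$, defined by $v^1(t)=b-k(t+\mathcal{E}(t))$, and compares $F(t)$ with $\widehat F_b(t+\mathcal{E}(t))$. This is the same argument, and the closing triangle inequality for $\triangle$ is identical.

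However, the step you flag as the obstacle is resolved incorrectly.

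\emph{Why the tip-region bound fails.} Proposition~\ref{prop: tip_local_convergence} gives $\varepsilon$-closeness on $B_{1/\varepsilon}(v(t))$, so the edges may differ from the model by up to $\varepsilon$ pointwise. Over a horizontal stretch of length comparable to $1/\varepsilon$, this only bounds the area between $\Gamma_t$ and the model grim reaper by about $\varepsilon\cdot\varepsilon^{-1}=O(1)$, not $O(\varepsilon)$.

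\emph{Why the far-region bound constrains the cut.} Integrating the bound of Theorem~\ref{thm: exp.conv.away.tip} over distances $\ge R$ from the vertex gives roughly $\beta^{-1}e^{-\beta(R-L)}$ with $L\sim\varepsilon^{-1/2}$. This is small only when $R-L\to\infty$.

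\emph{The fix.} You cannot use the $\varepsilon$-closeness on the whole ball. Cut at an intermediate distance $R$ with $\varepsilon R\to 0$ and $R-L\to\infty$. Beyond $R$, use the exponential convergence of both $\Gamma_t$ and the soliton to their asymptotes, even though that part still lies inside the ball. The paper takes $R=2L$, which gives $2\varepsilon\lvert a_i-a_j\rvert+4\varepsilon L\le C\varepsilon^{1/2}$ near the tip and $\tfrac{4}{\beta}e^{-\beta L}$ away from it.

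With this choice the total error is $O(\varepsilon^{1/2})$ rather than your claimed $O(\varepsilon)+O(e^{-\beta/\varepsilon})$, and your final ``$C\varepsilon$'' becomes $C\varepsilon^{1/2}$. Since this still tends to $0$ as $\varepsilon\to0$, the rest of your argument goes through unchanged: the tip comparison via areas and the concluding symmetric-difference estimate.
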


\begin{proof}
    For any $b\in \R$, $c>0$, the area of the unbounded region bounded between $\widehat{\Gamma}_b(t)$ and $\widehat{\Gamma}_{b+c}(t)$ is exactly $\lvert a_i - a_j\rvert c$, which contributes to the increment of the constant term in the expansion \eqref{eq: asymp.expansion.2}. 
    Thus, there exists a unique $b$ so that the constant terms in \eqref{eq: asymp.expansion.1} and \eqref{eq: asymp.expansion.2} match.
    The error estimate $O(e^{\beta t})$ in Proposition \ref{prop: area asymp} applies for the general solutions to CSF that converges to limit lines $x^2=a_i$ and $x^2 = a_j$, including translating grim reapers.  

    Define functions $h(t) := v^1(t)$ and $\hat{h}(t) :=  b-kt$ that represent $x^1$-coordinates of the vertices of $\Gamma(t)$ and $\widehat{\Gamma}_b(t)$, respectively. 
    Since $\hat{h}(t)$ is monotone, we may define a shifting function $\mathcal{E}(t)$ for all $t\ll -1$ such that $h(t) = \hat{h}(t + \mathcal{E}(t))$.
    Let $\vare>0$ be sufficiently small so that $L = 50\vare^{-\frac12}A >2D_0$ where $L$, $A$ and $D_0$ are stated in Theorem \ref{thm: exp.conv.away.tip}. 
    By Proposition~\ref{prop: tip_local_convergence}, there exists $t_{\vare}\ll -1$ such that for all $t\leq T$, $\Gamma(t)$ is $\vare$-close to $\widehat{\Gamma}_b(t+ \cE(t))$ in $C^k$-topology restricted in the domain $\{(x^1, x^2):x^1\geq h(t)-2L\}$.
    There exists a numeric constant $C$ such that for all $t\leq t_{\vare}$
    \begin{equation}\label{eq: area.diff.tip}
        \big\lvert \big(F(t)\triangle \widehat{F}_b(t + \cE(t))\big) \cap \{(x^1, x^2)\in \R^2:x^1\geq h(t)-2L\}\big\rvert  \leq C \vare^{\frac12}.
    \end{equation}
    To see this, we may first decompose the region in \eqref{eq: area.diff.tip} into disjoint subregions enclosed by closed and piecewise smooth curves $C_1, \ldots, C_N$ with counterclockwise orientation that comprise curve segments of $\Gamma_t$ and $\widehat{\Gamma}_b(t + \cE(t))$ and possibly some auxiliary vertical line segments at $x^1 = h(t) - \lvert a_i - a_j\rvert/4$.
    Suppose that $C_1, \ldots, C_M$ are near the tip and can be expressed as unions of graphs over the $x^2$-axis, and $C_{M+1}, \ldots, C_N$ are away from the tip and can be expressed as unions of graphs over the $x^1$-axis and vertical line segments.
    By Green's theorem, the area in \eqref{eq: area.diff.tip} is given by
    \begin{align*}
        \sum_{i=1}^M \int_{C_i} x \, dy + \sum_{i=M+1}^N \int_{C_i} (-y)\, dx \leq 2\vare \lvert a_i - a_j\rvert + 2\vare(2L)  \leq C\vare^{\frac12}.
    \end{align*}
    Furthermore, by Theorem~\ref{thm: exp.conv.away.tip}, there exists $\beta>0$ such that for all $t\leq T$,
    \begin{align*}
        \big\lvert \big(F(t) \triangle \widehat{F}_b(t + \cE(t)) \big) \cap  \{(x^1, x^2)\in \R^2:0\leq x^1\leq h(t)-2L\} \big\rvert \\
        \leq 4\int_{0}^{h(t)-2L} e^{-\beta(h(t) - L -x)}\, dx \leq \tfrac{4}{\beta} e^{-\beta L} < C\vare^{\frac12},
    \end{align*}
    provided that $\vare$ is sufficiently small.
    Putting all together, for all $\vare>0$, there exists $t_{\vare}$ such that for all $t\leq t_{\vare}$,
    \begin{align}\label{eq: area.diff}
        \lvert F(t) \triangle \widehat{F}_b(t + \cE(t))\rvert \leq 2C\vare^{\frac12}.
    \end{align}

    We claim that $\cE(t) \to 0$ as $t\to -\infty$. 
    Using the asymptotic expansion for area \eqref{eq: asymp.expansion.1} and \eqref{eq: asymp.expansion.2} together with \eqref{eq: area.diff}, for all $t\leq t_{\vare}$
    \begin{equation}\label{eq: cE(t)}
        \pi\lvert \cE(t)\rvert + O(e^{\beta t}) =\big\lvert \lvert F(t)\rvert-\lvert \widehat{F}_b(t + \cE(t))\rvert \big\rvert \leq \lvert F(t) \triangle \widehat{F}_b(t + \cE(t))\rvert \leq 2C\vare^{\frac12}.
    \end{equation}
    Letting $\vare\to 0^+$, we complete the claim.

    Recall that $(A\triangle B)\triangle (B\triangle C) = A\triangle C$ for any sets $A,B$, and $C$. 
    Using the observation in the first paragraph and \eqref{eq: cE(t)}, for $t\leq t_{\vare}$
    \begin{align*}
        \lvert F(t) \triangle \widehat{F}_b(t)\rvert &\leq \lvert F(t) \triangle \widehat{F}_b(t+ \cE(t))\rvert + \lvert \widehat{F}_b(t+ \cE(t)) \triangle \widehat{F}_b(t) \rvert\\
        & \leq 2C\vare^{\frac12} + \cE(t)\lvert a_i - a_j\rvert\\
        &\leq 2C \vare^{\frac12} + 2C\vare^{\frac12}\pi^{-1}\lvert a_i - a_j\rvert + O(e^{\beta t}).
    \end{align*}
    Letting $\vare\to 0^+$ yields that $\lvert F(t) \triangle \widehat{F}_b(t)\rvert \to 0$ as $t\to -\infty$.
\end{proof}

\bigskip
\bigskip

\begin{corollary}\label{cor: total translation of tip}
Let $\Gamma_t$ be a right-pointing finger which is asymptotic to the lines $x^2=a_i$ and $x^2=a_j$ as $t \to -\infty$. Then, its tip $v(t)$ satisfies
\begin{equation}\label{eq:tip_limit}
    \lim_{t\to-\infty}\big\{v(t) + \frac{\pi}{|a_i-a_j|} t\be_1\big\}=\big(b, \frac{a_i + a_j}{2}\big)
\end{equation}
where $b \in \mathbb{R}$ is the same as in Theorem~\ref{thm: best.fitting.gr}.
\end{corollary}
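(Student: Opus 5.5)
The plan is to reduce the statement to the two ingredients established in the proof of Theorem~\ref{thm: best.fitting.gr}: the shifting function $\cE(t)$ defined by $h(t)=\hat h(t+\cE(t))$, and the local grim reaper convergence of Proposition~\ref{prop: tip_local_convergence}. We handle the $x^1$-coordinate and the $x^2$-coordinate of $v(t)$ separately.

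\textbf{The $x^1$-coordinate.} By definition, $h(t)=v^1(t)$ and $\hat h(t)=b-kt$ with $k=\pi|a_i-a_j|^{-1}$. The relation $h(t)=\hat h(t+\cE(t))$ then reads
\begin{equation*}
v^1(t)+kt = b - k\,\cE(t).
\end{equation*}
In the proof of Theorem~\ref{thm: best.fitting.gr} we showed, via \eqref{eq: cE(t)}, that $\cE(t)\to 0$ as $t\to-\infty$. Since $k$ is a fixed finite constant (because $a_i\neq a_j$ by Theorem~\ref{thm: non-degenerate finger}), we get $v^1(t)+kt\to b$. This is exactly the first component of \eqref{eq:tip_limit}.

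\textbf{The $x^2$-coordinate.} We must show $v^2(t)\to \bar a=(a_i+a_j)/2$. Fix a small $\vare$ and take $t\le t_\vare$. By Proposition~\ref{prop: tip_local_convergence}, $\Gamma_t\cap B_{1/\vare}(v(t))$ is $\vare$-close in $C^k$ to the grim reaper bounded between the asymptotes $x^2=a_i$ and $x^2=a_j$ with tip at $v(t)$. The delicate point is whether that proposition already places the model's axis of symmetry at height $\bar a$, or only fixes its width.

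To pin down the height, we argue as in the proof of Proposition~\ref{prop: tip_local_convergence}. Let $q^1=v^1(t)-50\vare^{-1/2}|\kappa(v(t))|^{-1}$. The model grim reaper is symmetric about its tip and has tilt less than $\vare$. Hence its two edges at $x^1=q^1$ lie at heights within $o_\vare(1)$ of $v^2(t)\pm\tfrac12|a_i-a_j|$. On the other hand, \eqref{eq: U.p} together with \eqref{eq: U.q} gives
\begin{equation*}
|U^1(q^1,t)-a_i|\le C\vare^{1/2},\qquad |U^2(q^1,t)-a_j|\le C\vare^{1/2}.
\end{equation*}
Averaging the two edge heights yields $|v^2(t)-\bar a|\le C\vare^{1/2}+o_\vare(1)$. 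Letting $\vare\to0^+$ then gives $v^2(t)\to\bar a$.

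As an alternative check for the vertical component, one can use the area argument. Any vertical offset of $v^2(t)$ from $\bar a$ would force a symmetric difference of definite size in $\lvert F(t)\triangle\widehat F_b(t)\rvert$ near the tip. This contradicts Theorem~\ref{thm: best.fitting.gr}, since the width is already correct and both curves are confined to the strip by Proposition~\ref{prop: finger in strip}.

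\textbf{Main obstacle.} The main difficulty is the vertical component: ensuring that the tip height is controlled by the averaged asymptotes with error tending to zero, rather than merely bounded. The horizontal component is essentially immediate from $\cE(t)\to0$.
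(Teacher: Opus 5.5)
Your proposal is correct and follows the paper's route. The paper's proof just reads off the tip $(b-kt,\bar a)$ of the best-fitting grim reaper $\widehat\Gamma_b(t)$, and your identity $v^1(t)+kt=b-k\,\cE(t)$, combined with $\cE(t)\to0$ from the proof of Theorem~\ref{thm: best.fitting.gr}, is exactly what makes the first component rigorous. The paper leaves the vertical component implicit in Proposition~\ref{prop: tip_local_convergence}, and your averaging of the two edge heights at $x^1=q^1$ via \eqref{eq: U.p}--\eqref{eq: U.q} is a sound way to fill in that step; it does not even need the width to be $|a_i-a_j|$, only the symmetry and small tilt of the model.
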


\begin{proof}
The position of the tip of the best-fitting grim reaper \eqref{eq: grim reaper} determined in Theorem \ref{thm: best.fitting.gr} is exactly $(-\pi t/\lvert a_i - a_j\rvert + b,\ (a_i + a_j)/2)$.
\end{proof}

\begin{figure}
    \begin{center}
            \includegraphics[width=0.8\linewidth]{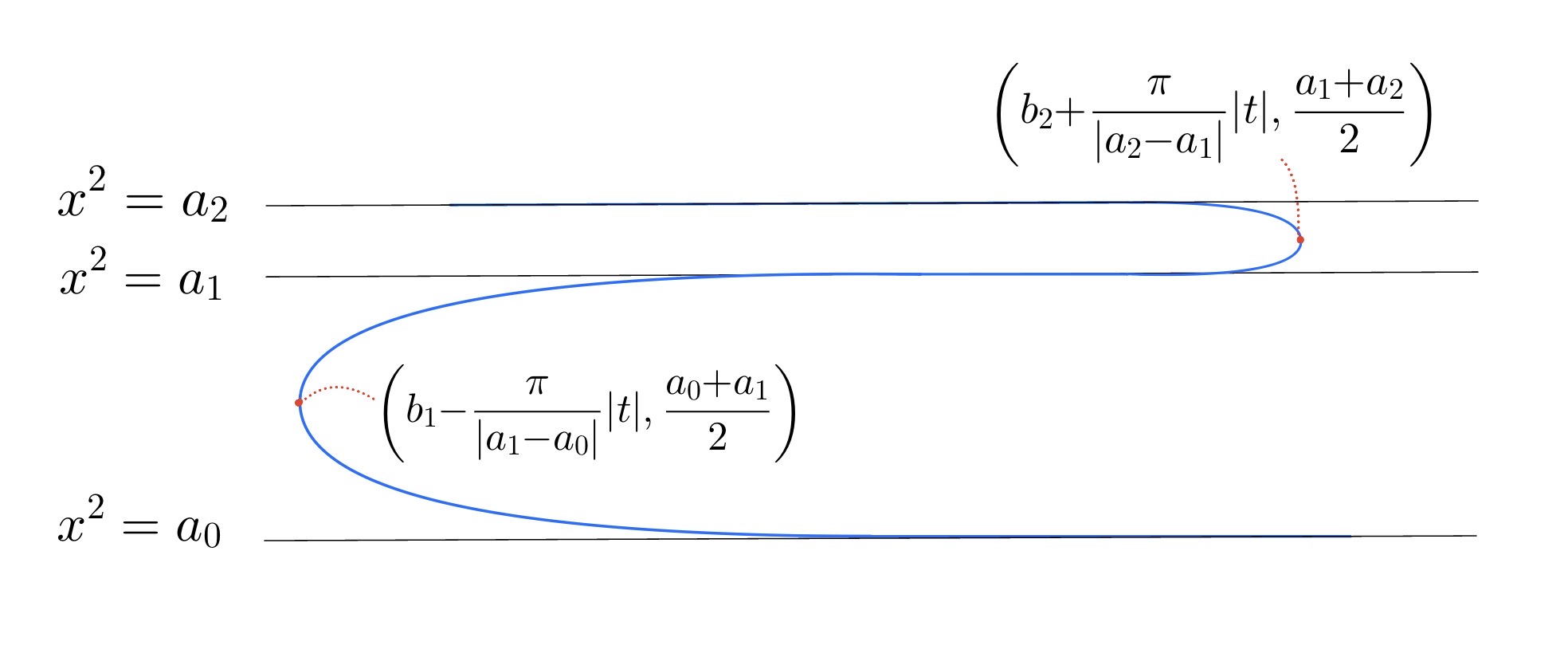}
    \end{center}
    \caption{Tips asymptotic to grim reapers.}
        \label{fig: trombone}
\end{figure}

\bigskip

\section{Classification}
\noindent By Theorem~\ref{thm: sharp asympt line}, the sheets $\Sigma_t^0, \cdots, \Sigma_t^m$ of an ancient solution $\cM$ with $\ent[\cM] =m+1\geq 3$ locally converge to the lines  $x^2 = a_0, \cdots, x^2 = a_m$ as $t \to -\infty$. 
We denote by $\Sigma^0$ the sheet of $M_t$ with lowest $x^2$-intercept and label $\Sigma^i$ according to the given orientation of $\cM$: $\Sigma_t^i$ is followed by $\Sigma_t^{i+1}$ along the orientation for all $i = 0, 1, \ldots, m-1$ if $M_t$ is noncompact, or for all $i = 0, 1, \ldots, m$ and identifying $\Sigma_t^{m+1}$ with $\Sigma_t^0$ if $M_t$ is compact.
For $t \ll -1$, let $\Gamma_t^i$ denote the finger consisting of the sheets $\Sigma_t^i$ and $\Sigma_t^{i+1}$, and let $F^i(t)$ denote the finger region of $\Gamma_t^i$ as defined prior to Proposition \ref{prop: area asymp}.

\begin{theorem}[graphicality]\label{thm: graphicality}
Suppose that $\ent(\mathcal{M}) = m +1 \geq 3$. Then $M_t$ must be noncompact with $a_{0} <\ldots < a_{i-1}< a_{i} < \cdots<a_{m}$. 
Moreover, for all $t\in \mathbb{R}$, $M_t$ is a graph $x^1 = V(x^2,t)$ where $V\in C^\infty\big((a_0, a_m) \times (-\infty, \infty)\big)$. 
See Figure \ref{fig: trombone} for an illustration.
\end{theorem}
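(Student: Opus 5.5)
Our plan is to establish three things in turn: the ordering of the asymptotes, noncompactness, and global graphicality over the $x^2$-axis for all times.

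\emph{Ordering.} For $t\ll -1$, every finger $\Gamma_t^i$ has pairwise distinct asymptotes by Theorem~\ref{thm: non-degenerate finger} and the subsequent remark. By Corollary~\ref{cor: total translation of tip}, its tip lies near $\big(b_i\mp \tfrac{\pi}{|a_i-a_{i+1}|}t,\ \tfrac{a_i+a_{i+1}}{2}\big)$. Consecutive fingers share the sheet $\Sigma^{i+1}$ and alternate between right-pointing and left-pointing, since the angle changes by approximately $\pm\pi$ across each finger. By Theorem~\ref{thm: best.fitting.gr} and Proposition~\ref{prop: tip_local_convergence}, each finger is asymptotically a grim reaper filling the strip between its two asymptotes. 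Embeddedness then forces nesting to be impossible. Suppose two right-pointing fingers had overlapping strips. Their grim-reaper approximations extend to $x^1\to+\infty$ at linear speed inside the strip, while the exponential convergence of Theorem~\ref{thm: exp.conv.away.tip} pins the edges to the lines $x^2=a_k$. A crossing of edges then produces an intersection. We would argue inductively along the orientation: with $\Sigma^0$ lowest, the finger $\Gamma^0$ occupies $(a_0,a_1)$ with $a_1>a_0$. The next finger $\Gamma^1$ opens to the other side and must lie in $(a_1,a_2)$ with $a_2>a_1$, because otherwise its strip would overlap that of $\Gamma^0$ in the region traversed by both tips, producing a self-intersection. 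This yields $a_0<a_1<\cdots<a_m$.

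\emph{Noncompactness.} If $M_t$ were closed, there would be $m+1$ fingers, and the cyclic labelling would require the last finger $\Gamma^m$ to connect $\Sigma^m$ back to $\Sigma^0$. Its strip would then be $(a_0,a_m)$, which contains the strips of the other fingers. This gives the same overlap contradiction as above. Hence $M_t$ is noncompact, and its two tails converge to $x^2=a_0$ and $x^2=a_m$, by Corollary~\ref{cor: tail in strip} and Proposition~\ref{prop: solution in strip}.

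\emph{Graphicality.} For $t\ll -1$, we show that $\theta\in(0,\pi)$ along $M_t$ modulo the alternating sign convention, so that $M_t$ is a graph over $(a_0,a_m)$. This follows from the edge analysis in Proposition~\ref{prop: anal.ang.kap.} and Proposition~\ref{prop: min angle at infl pt}: on each edge the angle stays strictly between the values at the two tips, and near each tip the grim-reaper closeness gives monotone $x^2$. To pass to all $t$, we would use Sturmian theory applied to $E=\langle \be_2,\gamma\rangle$, or equivalently to $\cos\theta$. The number of critical points of the height function $x^2$ along $M_t$ is nonincreasing in time, and it is zero for $t\ll -1$. It therefore remains zero, so $M_t$ stays a graph $x^1=V(x^2,t)$. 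The domain stays $(a_0,a_m)$ because the lines $x^2=a_0$ and $x^2=a_m$ are barriers and the tails stay asymptotic to them. Smoothness of $V$ follows from the smoothness of the flow and the nonvanishing of $\sin\theta$.

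The main obstacle is the nesting exclusion, namely making rigorous that overlapping strips force an intersection. Our first attempt would compare the tip positions from Corollary~\ref{cor: total translation of tip}, which grow linearly in $|t|$, with the exponentially thin edges from Theorem~\ref{thm: exp.conv.away.tip}. In addition, the noncompact-end boundary behaviour of the Sturmian count needs care, which we would handle via the strip estimates of Proposition~\ref{prop: tail in strip}.
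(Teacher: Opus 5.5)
Your ordering step has a genuine gap. The inductive claim that $a_2<a_1$ would make the strip of $\Gamma^1$ overlap that of $\Gamma^0$ ``in the region traversed by both tips'' is false. $\Gamma^0$ and $\Gamma^1$ share the sheet $\Sigma^1$ and point in opposite directions. As $t\to-\infty$ their tips run off to $x^1\to+\infty$ and $x^1\to-\infty$ respectively, so there is no common region, and $a_2<a_1$ produces no intersection near either tip. The only constraints come from fingers of the \emph{same parity} (same direction), whose finger regions must be nested or disjoint, together with global topology.

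Your stated mechanism, an edge crossing, also only rules out \emph{partial} overlap of same-direction strips. A proper nesting $(a_k,a_{k+1})\subsetneq(a_l,a_{l+1})$ is perfectly embedded at every fixed time. What excludes it is the width-dependent tip speed $\pi/|a_k-a_{k+1}|$ from Corollary~\ref{cor: total translation of tip}: the inner finger is strictly faster, so going backward in time its tip overtakes the outer one and the fingers collide. You gesture at comparing tip positions but never isolate this speed discrepancy, which is the key dynamical input.

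Even granting these facts, monotonicity does not follow from a one-step induction. You need a case analysis at the first index $i\ge 1$ with $a_{i-1}<a_i$ but $a_{i+1}<a_i$.
\begin{itemize}
\item If $a_{i+1}<a_{i-1}$, then $(a_{i-2},a_{i-1})\subsetneq(a_{i+1},a_i)$ is a proper nesting of same-parity fingers, which is excluded.
\item If $a_{i-1}\le a_{i+1}<a_i$, the $x^2$-intercept of $\Sigma^{i+1}$ lies between those of $\Sigma^{i-1}$ and $\Sigma^i$. So $\Sigma^{i+1}$ is trapped in the bounded region $F^{i-1}(t)$ and cannot be a tail. It therefore opens a finger $\Gamma^{i+1}$, whose interval is forced (no proper nesting) to equal $(a_{i-1},a_i)$. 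Iterating gives chains $F^{i-1}(t)\supset F^{i+1}(t)\supset\cdots$ and $F^{i}(t)\supset F^{i+2}(t)\supset\cdots$ with no tails. The curve would then have to close up and return to $q_0$, which lies outside these regions, a contradiction.
\end{itemize}
The second case involves non-adjacent sheets with \emph{equal} asymptotes. Theorem~\ref{thm: non-degenerate finger} does not exclude this, since its proof treats only the two edges of a single finger, so you cannot bypass the case via the remark.

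Your noncompactness step has the same defect. In the closed case the number $m+1$ of fingers is even, so $\Gamma^m$ has the parity of $\Gamma^1$. The contradiction is then the proper nesting $(a_1,a_2)\subsetneq(a_0,a_m)$, killed by the speed argument rather than by a crossing. Alternatively, once $a_i<a_{i+1}$ is proved for all cyclic $i$, it follows immediately from $a_0<a_m<a_{m+1}=a_0$.

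Your graphicality argument for $t\ll-1$ matches the paper's, which only invokes Propositions~\ref{prop: min angle at infl pt} and \ref{prop: anal.ang.kap.}. The Sturmian extension to all $t$ is extra. There, note that $\partial_s x^2=\sin\theta$, not $\cos\theta$. The zero count along the tails also needs the control at infinity that you flag.
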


\begin{proof}
    Fix a time $t \leq T_{\varepsilon}$ for some sufficiently small $\varepsilon > 0$, where $T_{\varepsilon}$ is defined as in Proposition \ref{prop: tip_local_convergence}. In the following discussion, the statement ``for all $i$" refers to $i = 0, \ldots, m$ in the compact case and $i = 0, \ldots, m-1$ in the non-compact case.
    By nondegeneracy of finger Theorem \ref{thm: non-degenerate finger}, $a_i \neq a_{i+1}$ for all $i$.
    By (\ref{eq: angle difference is pi}), the orientations on the asymptotic lines $x^{2} = a_{i}$ induced by the parametrization of $M_{t}$ are alternating. Consequently, the finger $\Gamma^{i-2}_{t}$ must point in the same direction as the finger $\Gamma^{i}_{t}$ (which is asymptotic to $x^{2} = a_{i}$ and $x^{2} = a_{i+1}$). In particular, all fingers $\Gamma_t^i$ with an odd index $i$ point in one direction, while all fingers with an even index $i$ point in the opposite direction.
    
    Note that by the embeddedness of $M_t$, if $i \equiv j \pmod 2$, the finger regions must satisfy a nesting or disjointness property. Specifically, the regions are either nested, such that $F^i(t) \subseteq F^j(t)$ or $F^j(t) \subseteq F^i(t)$, or they are disjoint, such that $F^i(t) \cap F^j(t) = \emptyset$. Notably, these two cases are equivalent to the nesting or disjointness of the \emph{open} intervals $I_i$ and $I_j$ formed by the asymptotic values $\{a_i, a_{i+1}\}$ and $\{a_j, a_{j+1}\}$, respectively.

    Observe that \emph{proper} nesting of the open intervals $I_i$ and $I_j$ is impossible for $i \equiv j \pmod 2$. Otherwise, by Corollary \ref{cor: total translation of tip}, the travel speed of one finger would be strictly greater than that of the other. This discrepancy in speed implies that at some time $\bar{t} \leq t$, the fingers must collide, such that $\Gamma^{i}_{\bar{t}} \cap \Gamma^{j}_{\bar{t}} \neq \emptyset$, contradicting embeddedness.

    We claim that $a_i < a_{i+1}$ for all $i$.
    It immediately follows that $M_t$ cannot be compact since $a_0 < a_{m} < a_{m+1}= a_0$, a contradiction.
    Suppose the claim is false, given that $\Sigma_t^{0}$ is the lowest sheet, there exists a smallest index $i \geq 1$ such that $a_{i-1} < a_i$ but $a_{i+1} < a_i$.
    We will examine two exhaustive cases $a_{i+1} < a_{i-1} < a_i$ and $a_{i-1}\leq a_{i+1} < a_i$, separately.

    \

    \noindent\textbf{Case 1} ($a_{i+1} < a_{i-1} < a_{i}$): Since $a_{0}$ is the minimal value, it follows that $i \geq 2$. 
    By the nesting or disjoint property of fingers, $(a_{i-2}, a_{i-1})\subsetneq (a_{i+1}, a_i)$. However, this nesting is proper, which is impossible. See Figure~\ref{fig: impos.trom.1} for an illustration.

    \begin{figure}[h]
    \begin{center}
            \includegraphics[width=0.85\linewidth]{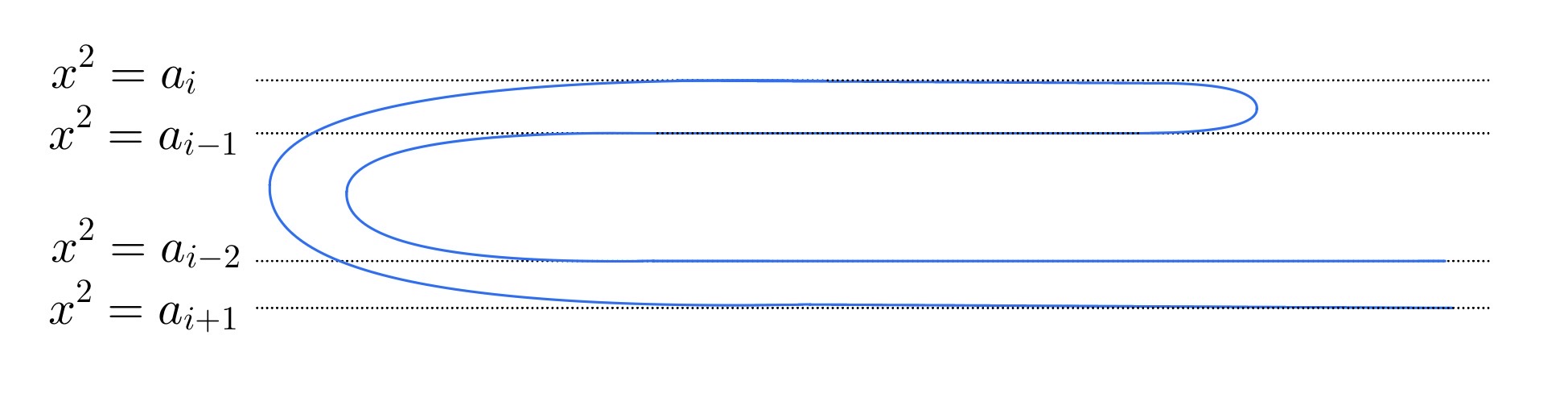}
    \end{center}
    \caption{Case 1: $a_{i+1} < a_{i-1} < a_{i}$: proper nesting of fingers.}
        \label{fig: impos.trom.1}
    \end{figure}

    \noindent\textbf{Case 2} ($a_{i-1} \leq a_{i+1} < a_i$): For the subsequent discussion, let $q_k = (0, q_k^2)$ denote the $x^2$-intercept of $\Sigma_t^k$ for each $k$. By the embeddedness of $M_t$ and the assumption $\text{Ent}(\mathcal{M}) \geq 3$, we have $q_{i-1}^2 < q_{i+1}^2 < q_i^2$, since the case $q_{i-1}^2 = q_{i+1}^2$ can only occur when $i=m=1$, which implies $\text{Ent}(\mathcal{M})=2$.

    The sheet $\Sigma^{i+1}_{t}$ cannot contain a tail; otherwise, the finite distance from $q_{i+1}$ to the tip of the finger $\Gamma^{i-1}_{t}$ necessitates a self-intersection point, a contradiction. Thus, $\Sigma^{i+1}_{t}$ is connected to a finger $\Gamma^{i+1}_{t}$, implying $a_{i+2}$ exists and $m \geq i+2$. 

    \begin{figure}[h]
    \begin{center}
            \includegraphics[width=0.75\linewidth]{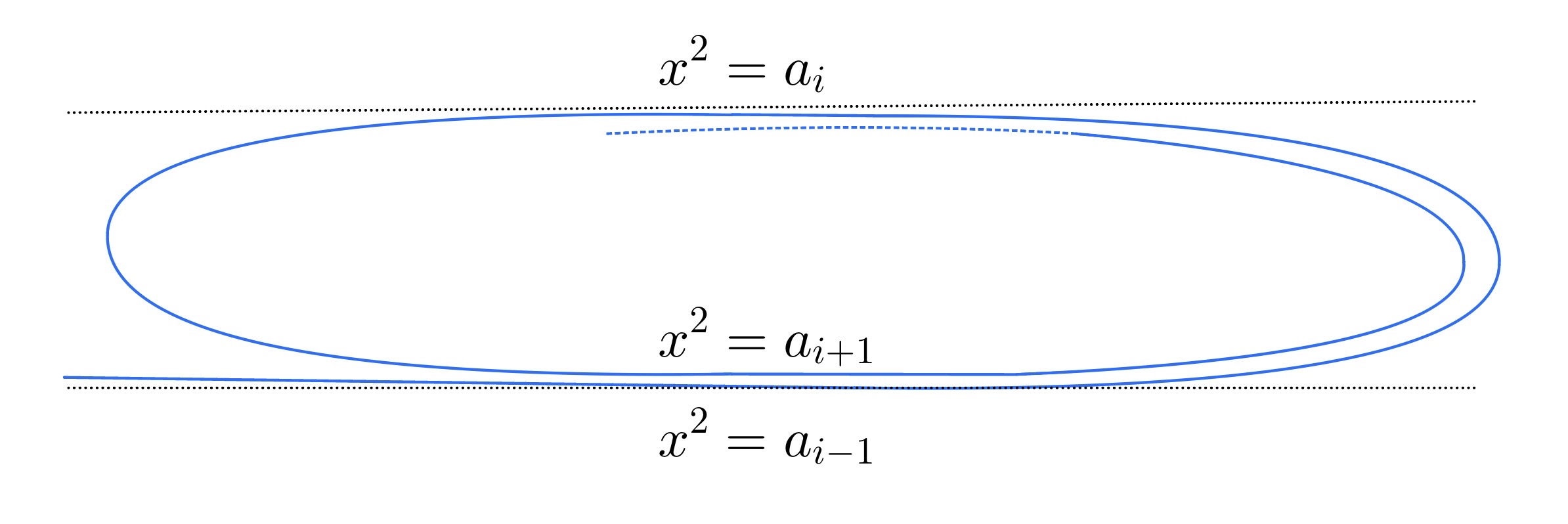}
    \end{center}
    \caption{Case 2: $a_{i-1} \leq a_{i+1} < a_i$: spiral and nesting chains.}
        \label{fig: impos.trom.2}
    \end{figure}

    Next, we must have $a_{i-1} = a_{i+1}$. Otherwise, by the nesting or disjointness property of fingers, $(a_{i+1}, a_{i+2}) \subsetneq (a_{i-1}, a_i)$, which is impossible. Inductively, $\Sigma_t^m$ does not contain a tail and, to avoid the proper nesting of open intervals $I_j$ of the same parity, we must have $a_{j} = a_{i-1}$ for all $j \geq i-1$ with $j \equiv i-1 \pmod 2$, and $a_j = a_i$ for all $j \geq i$ with $j \equiv i \pmod 2$.

    This configuration implies two \emph{finite} descending nesting chains of finger regions:
    \begin{equation*}
        F^{i-1}(t) \supset F^{i+1}(t) \supset \cdots \quad \text{and} \quad F^{i}(t) \supset F^{i+2}(t) \supset \cdots.
    \end{equation*}
    But this is impossible. Specifically, $M_t$ is compact (since $\Sigma_t^m$ does not contain a tail and, by \cite[Proposition 6.14]{CSSZ24}, tails must occur in pairs) and $\Gamma_t^{m}$ must connect back to $q_0$. However, $q_0$ lies outside both $F^{i+1}(t)$ and $F^i(t)$; the nested structure of these finger regions thus prevents $\Gamma_t^{m}$ from reaching $q_0$ without intersecting earlier segments of the manifold. This contradiction completes the proof of the claim. See Figure~\ref{fig: impos.trom.2} for an illustration.

    We have thus shown that $M_{t}$ is complete non-compact and the asymptotic constants satisfy $a_{0} < \dots < a_{m}$. The graphicality of $M_{t}$ then follows from Proposition~\ref{prop: min angle at infl pt} for the sheets connecting fingers and Proposition~\ref{prop: anal.ang.kap.} {\bf (B)} for tails.
    
\end{proof}

\bigskip

Let us summarize the progress made thus far:
\begin{itemize}
    \item \textbf{Asymptotic Lines:} According to Theorem \ref{thm: sharp asympt line}, the ancient solution $\cM$ with $\ent[\cM] =m+1\geq 3$ locally converges to the lines  $x^2 = a_0, \cdots, x^2 = a_m$.
    
    \item \textbf{Finger Convergence:} Following Proposition \ref{prop: tip_local_convergence} and Theorem \ref{thm: graphicality}, $M_t=\{(V(z,t),z):z \in (a_0,a_m)\}$ is a non-compact complete graph with $m$ fingers converging to translating grim reapers, where each finger $\Gamma_t^i$ with $i \in \{1,\cdots,m\}$ is asymptotic to the lines $x^2 = a_{i-1}$ and $x^2 = a_i$. Moreover, two adjacent translating grim reapers have different orientations alternating between right-pointing and left-pointing.
    
    \item \textbf{Horizontal Parameters:} Finally, by Corollary \ref{cor: total translation of tip} for each finger $\Gamma^i_t$ we can find the best-fitting grim reaper solitons $\widehat{\Gamma}_t^i$ determined by the horizontal shift $b_i$.
\end{itemize}

By prescribing the same sequence of best-fitting grim reapers $\widehat{\Gamma}_t^i$ determined by the sequences $\mathbf{a} = \{a_0, \ldots, a_m\}$ and $\mathbf{b}=\{b_1, \ldots, b_m\}$ with orientations determined by (only one of) tails of $\cM$, Angenent and You constructed a \emph{graphical ancient trombone solution} $\cT_{\mathbf{a}, \mathbf{b}}^{\pm}$ in Section 3 of \cite{AY21}. 
We will represent this solution as the graph $x^1 = \bar{V}(x^2, t)$ of the function $\bar{V} \in C^\infty\big( (a_0, a_m) \times (-\infty, \infty) \big)$ in what follows.

\begin{proposition}\label{prop: area.conv}
    Let $V$ and $\bar{V}$ be profile functions of  $\cM$ and the best-fitting trombone $\cT = \cup_t\, T_t\times \{t\}$, respectively. Then, $\int_{a_0}^{a_m} \lvert V(z,t)-\bar  V(z,t)\rvert\,dz$ is finite for all $t\ll -1$ and
    \begin{equation}\label{eq: area.conv}
        \lim_{t\to -\infty}  \int_{a_0}^{a_m} \lvert V(z,t)-\bar  V(z,t)\rvert\,dz= 0.
    \end{equation}
\end{proposition}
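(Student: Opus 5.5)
We reduce the $L^1$ difference of the two graphs to a sum over fingers of symmetric-difference areas of finger regions, and then invoke Theorem~\ref{thm: best.fitting.gr} for each finger, both for $\cM$ and for $\cT$. Both $M_t$ and $T_t$ are complete graphs $x^1=V(z,t)$, $x^1=\bar V(z,t)$ over $(a_0,a_m)$. They have the same asymptotic lines $x^2=a_0,\dots,x^2=a_m$, the same alternating orientations of fingers, and, by construction of the Angenent--You solution, the same best-fitting grim reapers $\widehat\Gamma^i_t$ with shifts $b_i$. For fixed $t\ll-1$ we split $(a_0,a_m)$ into the subintervals $(a_{i-1},a_i)$. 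On the strip $\{a_{i-1}<x^2<a_i\}$ the curve $M_t$ coincides with the finger $\Gamma^i_t$, away from the thin transition regions near $x^2=a_{i-1},a_i$ where the graph crosses the $x^2$-axis. The integral $\int_{a_{i-1}}^{a_i}|V-\bar V|\,dz$ is then exactly the area of the symmetric difference of the regions between the $x^2$-axis and the two graphs within the strip.

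The first step is to compare each finger region with its grim reaper. For $\cM$ we write
\[
\{(x^1,z): a_{i-1}<z<a_i,\ x^1 \text{ between } 0 \text{ and } V(z,t)\}
\]
and relate it to $F^i(t)$. These sets differ only by regions near $z=a_{i-1},a_i$, where the neighbouring sheets (pointing in the opposite direction) lie on the other side of the $x^2$-axis. By Theorem~\ref{thm: exp.conv.away.tip} together with the sharp estimates of Corollary~\ref{cor: sharp asympt line}, these sheets are exponentially close to $x^2=a_{i-1}$ or $a_i$ over distances comparable to $|t|$. We then apply Theorem~\ref{thm: best.fitting.gr} to $F^i(t)$ and get $|F^i(t)\triangle\widehat F^i_{b_i}(t)|\to0$. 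The same theorem applies to the trombone $\cT$, whose fingers are asymptotic to the same lines and whose best-fitting shifts are the same $b_i$ by construction. Hence $|\bar F^i(t)\triangle \widehat F^i_{b_i}(t)|\to 0$. The triangle inequality for symmetric differences gives $|F^i(t)\triangle\bar F^i(t)|\to0$, and summing over $i=1,\dots,m$ gives \eqref{eq: area.conv}.

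Finiteness for each fixed $t\ll-1$ reduces to checking that $V-\bar V$ is integrable near the endpoints $z\to a_0^+$ and $z\to a_m^-$, where the tails go to $x^1=\mp\infty$. Here we use Proposition~\ref{prop: tail in strip} and Corollary~\ref{cor: tail in strip}, together with the exponential estimate in case (B) of Theorem~\ref{thm: exp.conv.away.tip}. These show that the tail of $M_t$ satisfies $|U(x,t)-a_0|\le e^{-\beta(|x-v^1(t)|-L)}$, so the region between the tail and the line $x^2=a_0$ has finite area. The same holds for $\cT$ and for the grim reaper. The area between the two tails is therefore bounded by the sum of these finite areas, and so $\int|V-\bar V|dz<\infty$.

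The main obstacle is the bookkeeping at the interfaces $z=a_i$, $1\le i\le m-1$. Near such a level, $V(z,t)$ passes from the right-pointing finger $\Gamma^i$ through the sheet $\Sigma^i$ to the left-pointing finger $\Gamma^{i+1}$. One must check that the graph's sign region matches the union of the two finger regions up to a set of vanishing area. My plan is to use the unbounded-region identity from the proof of Theorem~\ref{thm: best.fitting.gr}, which shows that the area between a finger and its asymptotes is finite. Combined with the exponential tail estimate, this bounds the mismatch by $C\int_0^{c|t|}e^{-\beta s}\,ds$ times $e^{\beta t}$-type factors, and this bound tends to $0$.
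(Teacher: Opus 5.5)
Your proposal is correct and is essentially the paper's proof. Both split $\int_{a_0}^{a_m}\lvert V-\bar V\rvert\,dz$ into finger and tail contributions, apply Theorem~\ref{thm: best.fitting.gr} to each finger of both $\cM$ and $\cT$ (which share the best-fitting grim reapers $\widehat\Gamma^i_t$), use the triangle inequality for symmetric differences, and control the tails with case (B) of Theorem~\ref{thm: exp.conv.away.tip}. The interface bookkeeping you call the main obstacle comes only from cutting at the levels $z=a_i$: if you cut at the $x^2$-intercepts of the sheets instead, the finger regions $F^i(t)$ and the tail regions $E^0(t),E^m(t)$ tile the region between the graph and the $x^2$-axis, so the elementary inclusion $(\bigcup_i A_i)\triangle(\bigcup_i B_i)\subseteq\bigcup_i(A_i\triangle B_i)$ bounds the integral by $\lvert E^0\triangle\bar E^0\rvert+\lvert E^m\triangle\bar E^m\rvert+\sum_i\lvert F^i\triangle\bar F^i\rvert$ with no extra error term, which is how the paper proceeds.
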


\begin{proof}
Let $F^i(t)$ and $\widehat{F}^i(t)$ denote the finger regions of $\Gamma^i_t$ and the best-fitting grim reaper $\widehat{\Gamma}^i_t$ respectively for $i = 1,\ldots, m$ as before; and let $\bar{F}^i(t)$ denote the corresponding finger regions of $\cT$ that are asymptotic to the same best-fitting grim reaper $\widehat{\Gamma}_t^i$ for $i = 1, \ldots, m$.
Let $E^j(t)$ and $\bar{E}^j(t)$ denote the tail regions enclosed by the $x^2$-axis, horizontal line $x^2 = a_j$, and the nearby tails of $M_t$ and $\bar{M}_t$, respectively, for $j = 0$ or $m$.

The integral in \eqref{eq: area.conv} represents the sum of areas bounded between $M_t$ and $\bar{M}_t$.
Therefore, using the triangle inequality for symmetric difference,
\begin{align}
    \int_{a_0}^{a_m} &\lvert V(z,t) - \bar{V}(z,t)\rvert\, dz = \lvert E^0(t) \triangle \bar{E}^0(t)\rvert + \lvert E^m(t) \triangle \bar{E}^m(t)\rvert + \sum_{i = 1}^m \lvert F^i(t) \triangle \bar{F}^i(t)\rvert \notag\\
    &\leq \lvert E^0(t) \triangle \bar{E}^0(t)\rvert + \lvert E^m(t) \triangle \bar{E}^m(t)\rvert+ \sum_{i = 1}^m \lvert F^i(t) \triangle \widehat{F}^i(t)\rvert +\sum_{i = 1}^m \lvert \widehat{F}^i(t) \triangle \bar{F}^i(t)\rvert.\label{eq: area.conv.1}
\end{align}
According to Theorem \ref{thm: exp.conv.away.tip} Case (B), there exists (smaller) $\beta>0$ so that the tails of $M_t$ and $T_t$ can be represented by graphs $x^2 = O\big(\exp(\,\beta(t -\lvert x^1\rvert)\,)\big)$ for $t\ll -1$, and therefore, the first two terms in \eqref{eq: area.conv.1} are finite and converge to $0$ as $t$ goes to $-\infty$.
Applying Theorem \ref{thm: best.fitting.gr} to each finger of the general solution $\cM$ and trombone $\cT$, all terms in the summations in \eqref{eq: area.conv} also converge to 0 as $t$ goes to $-\infty$. 
This completes the proof.
\end{proof}

\begin{theorem}[classification]\label{thm: classification}
Any ancient, embedded, smooth solution $\mathcal{M}$ to the curve shortening flow with entropy $\ent[\cM] = m+1 \geq 3$ is a graphical ancient Angenent--You trombone solution. Furthermore, $\mathcal{M}$ is uniquely characterized by its heights $\mathbf{a}=\{a_0, \ldots, a_m\}$, horizontal shifts $\mathbf{b}=\{b_1, \ldots, b_m\}$, and the direction of its tails.
\end{theorem}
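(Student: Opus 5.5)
We want to show $V\equiv \bar V$, where $V$ is the profile of $\cM$ and $\bar V$ is the profile of the best-fitting trombone $\cT=\cT^{\pm}_{\mathbf a,\mathbf b}$ from the previous subsection. By Theorem~\ref{thm: graphicality}, both $M_t$ and $T_t$ are complete graphs $x^1=V(z,t)$ and $x^1=\bar V(z,t)$ over the same interval $(a_0,a_m)$. The plan is an $L^1$-contraction argument for the graphical equation over the $x^2$-axis.

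Writing the flow as a graph over $z=x^2$, the profile satisfies
\[
V_t=\frac{V_{zz}}{1+V_z^2}=\big(\arctan V_z\big)_z .
\]
This is a divergence-form quasilinear parabolic equation, so the difference $w:=V-\bar V$ satisfies $w_t=(\arctan V_z-\arctan\bar V_z)_z$. The standard Kato-type computation gives, for $t_1<t_2$,
\[
\int_{a_0}^{a_m}|w(z,t_2)|\,dz\le \int_{a_0}^{a_m}|w(z,t_1)|\,dz+\Big[\operatorname{sgn}(w)\,(\arctan V_z-\arctan \bar V_z)\Big]_{z=a_0}^{z=a_m}\Big|_{t_1}^{t_2}\text{-integrated boundary terms}.
\]
The interior term has a good sign because $\arctan$ is monotone, so $\operatorname{sgn}(w)_z\,(\arctan V_z-\arctan\bar V_z)\ge 0$ in the distributional sense.

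To make this rigorous, I will work on a truncated interval $[a_0+\eta,a_m-\eta]$ and let $\eta\to0$. Near $z=a_0$ and $z=a_m$ the curves are tails. There $V_z\to\pm\infty$ with the same sign for both graphs, since the tail directions agree by the choice of $\cT^{\pm}$. Hence $\arctan V_z-\arctan\bar V_z\to 0$ at the endpoints, and the boundary flux vanishes in the limit.

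I expect the boundary terms to be the main obstacle. To control them, I would use the tail description in Theorem~\ref{thm: exp.conv.away.tip}(B) together with Proposition~\ref{prop: anal.ang.kap.}(B). The angle $\theta$ on a tail tends monotonically to $0$ (or $\pi$), so $\pi/2-|\arctan V_z|\to0$ as $z\to a_0,a_m$, locally uniformly in $t$. One also needs finiteness of $\int|w|\,dz$ for every $t$, not just $t\ll-1$. I would obtain this either by propagating it forward with the same contraction estimate, or from the uniform exponential closeness of both tails to the lines $x^2=a_0$ and $x^2=a_m$ given by Theorem~\ref{thm: exp.conv.away.tip}. An alternative that avoids the boundary fluxes is to apply the contraction in the original Euclidean picture as area of the symmetric difference, where $\frac{d}{dt}$ of the enclosed area between two curves is bounded by the turning-angle difference at infinity, which is zero.

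With monotonicity in hand, $t\mapsto\int_{a_0}^{a_m}|V-\bar V|\,dz$ is nonincreasing on $\R$. Proposition~\ref{prop: area.conv} says this quantity tends to $0$ as $t\to-\infty$. Therefore it vanishes identically, so $V\equiv\bar V$ and $\cM=\cT_{\mathbf a,\mathbf b}^{\pm}$.

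Uniqueness of the parameters follows from the same reasoning. The data $\mathbf a$ are recovered as the asymptotic lines from Theorem~\ref{thm: sharp asympt line}. The data $\mathbf b$ are recovered from the tip asymptotics in Corollary~\ref{cor: total translation of tip}, and the tail direction is a topological invariant. Two trombones with the same data have $L^1$ distance tending to $0$ as $t\to-\infty$ by Proposition~\ref{prop: area.conv}, so they coincide by the contraction argument.
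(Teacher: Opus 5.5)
Your proposal is correct and follows essentially the same route as the paper. The argument is an $L^1$-contraction for $W=V-\bar V$ using the divergence form $W_t=\partial_z(\arctan V_z-\arctan\bar V_z)$ on a truncated interval $[a_0+\delta,a_m-\delta]$, with the interior term signed by monotonicity of $\arctan$, the endpoint fluxes killed via Proposition~\ref{prop: anal.ang.kap.}~(B), and Proposition~\ref{prop: area.conv} then forcing $\int|W|\,dz\equiv 0$. The one difference is that the paper proves monotonicity only for $t\ll -1$, where the tail estimates and finiteness are available, and then extends $V\equiv\bar V$ to all $t$ by forward uniqueness. That is simpler than your plan to get finiteness and endpoint control for every $t\in\R$, for which the cited estimates are not available.
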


\begin{proof}
Let $V$ and $\bar{V}$ be profile functions of $\cM$ and the best-fitting trombone $\cT = \cup_t\, T_t\times \{t\}$, respectively, as before, which are solutions to graphical CSF:
\begin{equation*}
    \pa_t V = \frac{V_{zz}}{1+ V_{z}^2} = (\arctan V_z)_z, \quad \pa_t \bar{V} = \frac{\bar{V} _{zz}}{1+ \bar{V} _{z}^2} = (\arctan \bar{V} _z)_z.
\end{equation*}
Define the function $W=V-\bar{V}$. 
Then, $W$ satisfies
\begin{equation}\label{eq: evol.W}
    \pa_t W = \partial_z (\arctan(V_z)-\arctan(\bar V_z)).
\end{equation}
For simplicity, we write $\varphi(z,t) = \arctan(V_z)$ and $\bar{\varphi}(z,t) = \arctan(\bar{V}_z)$.

For all $t\ll -1$, define the integral $A(t) := \int_{a_0}^{a_m} \lvert W(z,t) \rvert \, dz$ that represents the total area of all regions bounded between $M_t$ and $T_t$.
By Proposition \ref{prop: area.conv}, $A(t)$ is finite for all $t\ll -1$ and converges to 0 as $t\to -\infty$.
We claim that for all $t_1 \leq t_2\ll -1$, we have
\begin{equation}\label{eq: dec.A}
    A(t_1) \geq A(t_2).
\end{equation}
For $\vare, \delta>0$ sufficiently small, consider the integral $A_{\vare, \delta}(t):=\int_{a_0+\delta}^{a_m-\delta} (W^2+ \vare^2)^{\frac12}\, dz$. 
By dominant convergence theorem, $A_{\vare,\delta}(t)$ converges to $A(t)$ for all $t\ll -1$ as $\vare \to 0^+$, $\delta \to 0^+$.
In light of \eqref{eq: evol.W}, integrating by parts yields
\begin{align}
    \frac{d}{dt}A_{\vare, \delta}(t) &= \int_{a_0+\delta}^{a_m-\delta} \frac{W}{\sqrt{W^2+\varepsilon}} W_t\, dz\label{eq: dA.1}\\
    &= \frac{W}{\sqrt{W^2+\varepsilon}}\big(\varphi-\bar\varphi\big)\Big\rvert_{a_0+\delta}^{a_m - \delta}
     -\int_{a_0+\delta}^{a_m-\delta} \frac{\varepsilon W_z}{(W^2+\varepsilon)^{\frac{3}{2}}}\big(\varphi-\bar\varphi\big)\, dz.\notag
\end{align}
Observe that since $\tan \theta$ is increasing on $(-\pi/2, \pi/2)$, for any $\alpha, \beta\in (-\pi/2, \pi/2)$
\begin{equation*}
    (\tan \alpha- \tan \beta)(\alpha - \beta) \geq 0.
\end{equation*}
Using this observation, we find
\begin{equation*}
    -\int_{a_0+\delta}^{a_m-\delta} \frac{\varepsilon W_z}{(W^2+\varepsilon)^{\frac{3}{2}}}\big(\varphi-\bar\varphi\big)\, dz = -\int_{a_0+\delta}^{a_m-\delta} \frac{\varepsilon (\tan\varphi - \tan \bar\varphi)}{(W^2+\varepsilon)^{\frac{3}{2}}}\big(\varphi-\bar\varphi\big)\, dz \leq 0.
\end{equation*}
Therefore, integrating \eqref{eq: dA.1} over $[t_1, t_2]$ gives
\begin{align}\label{eq: D.int.W}
    A_{\vare, \delta}(t_2) - A_{\vare, \delta}(t_1)\leq  \int_{t_1}^{t_2 }\frac{W}{\sqrt{W^2+\varepsilon}}\big(\varphi-\bar\varphi\big)\Big\rvert_{a_0+\delta}^{a_m - \delta} \, dt.
\end{align}
Note that the RHS of \eqref{eq: D.int.W} converges to $0$ as $\delta\to 0^+$ since $\lvert W\rvert/\sqrt{W^2+ \vare^2}$ is bounded and $\varphi - \bar\varphi$ converges to $0$ as $z$ goes to $a_0$ and $a_m$ for all $t\ll -1$ by Proposition \ref{prop: anal.ang.kap.} Case (B).
Finally, letting $\delta \to 0^+$ and then $\vare\to 0^+$ completes the proof of the claim.

By \eqref{eq: dec.A} and Proposition \ref{prop: area.conv}, for all $t\ll -1$
\begin{equation*}
    0\leq  A(t) \leq \lim_{s\to -\infty}A(s) = 0.
\end{equation*}
It follows that $V\equiv \bar V$ for all $t\ll -1$, and for all $t$ by the identity principle.
\end{proof}

\bigskip

\begin{proof}[Proof of Theorem \ref{thm:main}]
If $\ent[\cM] < 3$, then by \cite[Theorem~1.5]{CSSZ24}, $\cM$ is a static line, a shrinking circle, a paper clip, or a translating grim reaper. If $3 \leq \ent[\cM] < \infty$, then by Theorem~\ref{thm: classification}, $\cM$ is a graphical ancient trombone solution.
\end{proof}

\begin{proof}[Proof of Corollary~\ref{cor:main1}]
By \cite[Theorem~1.1]{SZ24}, for an ancient, complete, smooth, embedded curve shortening flow, having finite total curvature is equivalent to having finite entropy. The result then follows from Theorem~\ref{thm:main}.
\end{proof}

\begin{proof}[Proof of Corollary~\ref{cor:main2}]
Note that the only closed solutions in the classification given by Theorem~\ref{thm:main} are the shrinking circle and the paper clip; notably, both of these solutions are convex.
\end{proof}

\begin{proof}[Proof of Corollary~\ref{cor:main3}]
Observe that the non-static, non-compact solutions in the classification provided in Theorem~\ref{thm:main} are the translating grim reaper and the ancient trombone; both are ancient solutions that evolve as complete graphs over a bounded open interval.
\end{proof}

\begin{proof}[Proof of Corollary~\ref{cor:main4}]
The result then follows directly from Theorem~\ref{thm: classification}.
\end{proof}

\bigskip
\bigskip

\subsection*{Acknowledgments}
KC has been supported by the KIAS Individual Grant MG078902, an Asian Young Scientist Fellowship, and the National Research Foundation(NRF) grants funded by the Korea government(MSIT) (RS-2023-00219980) and (RS2024-00345403); DS was partially supported by the PRIN project 20225J97H5 funded by Ministero dell'Università e della Ricerca in Italia and the grant no. EUR2024-153556 funded by MICIU/AEI/10.13039/501100011033; WS is supported by the Taiwan NSTC grant 114-2115-M-008-012-MY3.

 \bibliographystyle{amsplain} 
\bibliography{CSF}
\end{document}